\newtheorem{theorem}{Theorem}
\newtheorem{corollary}[theorem]{Corollary}
\newtheorem{lemma}[theorem]{Lemma}
\newtheorem{proposition}[theorem]{Proposition}
\newtheorem{non-theorem}{Non-Theorem}
\theoremstyle{remark}
\numberwithin{equation}{section}
\numberwithin{theorem}{section}
\newcommand\paperbody%
\newcommand{\df}[1]{\mathfrak{#1}}
\renewcommand{\tilde}{\widetilde}
\renewcommand{\bar}{\overline}
\renewcommand{\hat}[1]{\widehat{#1}}
\newcommand\wt{\widetilde}
\newcommand{\rest}[1]{\big\rvert_{#1}} 
\newcommand{\dzero}[1]{\frac{\partial}{\partial #1}\biggr\rvert_{#1 =0}}
\newcommand\pa{\partial}
\newcommand{\owedge}{\wedge}
\newcommand{\tm}{\operatorname{tm}}
\newcommand\eps\varepsilon
\newcommand\fbsc{\operatorname{b,sc}}
\newcommand\CI{{\mathcal{C}}^{\infty}}
\newcommand{\lrpar}[1]{\left( #1 \right)}
\newcommand{\lrspar}[1]{\left[ #1 \right]}
\newcommand\ang[1]{\langle #1 \rangle}
\newcommand{\lrbrac}[1]{\lbrace #1 \rbrace}
\DeclareMathOperator*{\FP}{\operatorname{FP}}
\DeclareMathOperator*{\Res}{\operatorname{Res}}
\DeclareMathOperator*{\RN}{\operatorname{RN}}
\renewcommand\Re{\operatorname{Re}}
\newcommand\diag{\mathrm{diag}}
\newcommand\Id{\operatorname{Id}}
\newcommand{\coker}{\mathrm{ coker }}
\newcommand\ind{\operatorname{ind}}
\newcommand{\Ind}{\mathrm{Ind}}
\newcommand\Ch{\operatorname{Ch}}
\newcommand\tCh{\operatorname{\widetilde{Ch}}}
\newcommand\tr{\operatorname{tr}}
\newcommand\Tr{\operatorname{Tr}}
\newcommand\RTr{{}^R\operatorname{Tr}} 
\newcommand\Trsig[2][{}]{\widehat{\operatorname{Tr}^\sigma_{#1}}\left( #2 \right)} 
\newcommand\Trpa[2][{}]{\widehat{\operatorname{Tr}^\partial_{#1}}\left( #2 \right)}
\newcommand\Trpasig[2][{}]{\operatorname{Tr}^{\partial,\sigma}_{#1}\left( #2 \right)}
\newcommand\tTrsig{\widehat{\operatorname{Tr}^\sigma}} 
\newcommand\tTrpa{\widehat{\operatorname{Tr}^\partial}}
\newcommand\tTrpasig{\operatorname{Tr}^{\partial,\sigma}}
\newcommand{\bbC}{\mathbb{C}}
\newcommand{\bbE}{\mathbb{E}}
\newcommand{\bbF}{\mathbb{F}}
\newcommand{\bbQ}{\mathbb{Q}}
\newcommand{\bbR}{\mathbb{R}}
\newcommand{\bbS}{\mathbb{S}}
\newcommand{\bbZ}{\mathbb{Z}}
\newcommand\cA{\mathcal{A}}
\newcommand\cC{\mathcal{C}}
\newcommand\cF{\mathcal{F}}
\newcommand\cG{\mathcal{G}}
\newcommand\cH{\mathcal{H}}
\newcommand\cI{\mathcal{I}}
\newcommand\cK{\mathcal{K}}
\newcommand\cN{\mathcal{N}}
\newcommand\cO{\mathcal{O}}
\newcommand\cT{\mathcal{T}}
\newcommand\cU{\mathcal{U}}
\newcommand\cV{\mathcal{V}}
\newcommand\cZ{\mathcal{Z}}
\newcommand\tcC{\tilde{\mathcal{C}}}
\newcommand\tcH{\tilde{\mathcal{H}}}
\newcommand\datver[1]{\def\datverp%
 {\par\boxed{\boxed{\text{#1; Run: \today}}}}}
\newcommand\comp{\overline}
\newcommand\scat{\operatorname{sc}}
\newcommand\Kc{K_{\text{c}}}
\newcommand\Hc{H_{\text{c}}}
\newcommand\Hce{H^{\even}_{\text{c}}}
\newcommand\Kco{K^1_\text{c}}
\newcommand\bo{\operatorname{b}}
\newcommand\even{\text{even}}
\newcommand\odd{\text{odd}}
\newcommand\ZS{{}^{0}S}
\newcommand\ZT{{}^{0}T}
\newcommand\inside[1]{#1^{\circ}}
\newcommand\scT{\operatorname{{}^{\scat}T}}
\newcommand\scS{\operatorname{{}^{\scat}S}}
\newcommand\ZPs[2]{\Psi^{#1 }_{0}(#2)}
\newcommand\bTr{\overline{\operatorname{Tr}}}
\newcommand\Td{\operatorname{Td}}
\newcommand\td{\tilde{d}}
\newcommand\ie{i\@.e\@. }
\newcommand\eg{e\@.g\@. }
\newcommand\CIc{{\mathcal{C}}^{\infty}_c}
\newcommand\BPs[2]{\Psi_{\text{b}}^{#1}({#2})} 
\newcommand\Mand{\text{ and }}
\newcommand\Mst{\text{ s.t. }}
\newcommand\Mso{\text{ so }}
\newcommand\Mwhere{\text{ where }}
\newcommand\Mwith{\text{ with }}
\begin{document}
\title
[Relative Chern character, boundaries and index formul\ae]
{Relative Chern character, boundaries and index formul\ae}

\author{Pierre Albin}
\author{Richard Melrose}
\address{Department of Mathematics, Massachusetts Institute of Technology}
\email{pierre@math.mit.edu}
\email{rbm@math.mit.edu}
\thanks{The first author was partially supported by an NSF postdoctoral
  fellowship and the second author received 
  partial support under NSF grant DMS-0408993.}

\begin{abstract} For three classes of elliptic pseudodifferential operators
on a compact manifold with boundary which have `geometric K-theory', namely
the `transmission algebra' introduced by Boutet de Monvel
\cite{BoutetdeMonvel1}, the `zero algebra' introduced by Mazzeo in
\cite{Mazzeo:Hodge,Mazzeo-Melrose} and the `scattering algebra' from
\cite{MR95k:58168}, we give explicit formul\ae\ for the Chern character of
the index bundle in terms of the symbols (including normal operators at the
boundary) of a Fredholm family of fibre operators. This involves
appropriate descriptions, in each case, of the cohomology with compact
supports in the interior of the total space of a vector bundle over a
manifold with boundary in which the Chern character, mapping from the
corresponding realization of K-theory, naturally takes values.
\end{abstract}

\maketitle

\tableofcontents

\section*{Introduction}

Among the different algebras of pseudodifferential operators on a compact
manifold with boundary, those for which the stable homotopy classes of the
Fredholm elements reduce to the relative K-theory of the cotangent bundle
can be expected to have the simplest, most local, index formul\ae. These
cases include the algebra of transmission operators introduced by Boutet de
Monvel, for which this is shown in essence in \cite{BoutetdeMonvel1}, the
algebra of `zero pseudodifferential operators' of \cite{Mazzeo-Melrose1}
and the algebra of `scattering pseudodifferential operators' of
\cite{MR95k:58168}. For a Fredholm family of such operators defined on the
fibers of a fibration,
\begin{equation}
\xymatrix{Z\ar@{-}[r]&M\ar[d]^{\psi}\\ &B,}
\label{rccbif.3}\end{equation}
the families index, composed with the Chern character, therefore gives a
map (the same in all three cases)
\begin{equation}
K^0\lrpar{T^*(\inside{M}/B)}\overset{\ind}\longrightarrow K(B)
\overset{\Ch}\longrightarrow H^{\even}(B),\ \inside{M}=M\setminus\pa M.
\label{rccbif.1}\end{equation}
This map is always given by the general Atiyah-Singer formula (including here the
extension by Atiyah and Bott) just as in the boundaryless case in
\cite{MR38:5243,MR38:5245}
\begin{equation}
\Ch(\Ind(A))=\psi_!\left(\Ch([A])\Td(Z)\right)
\label{rccbif.2}\end{equation}
where $\Ch([A])$ is the Chern character, 
\begin{equation}
\Ch:K^0\lrpar{T^*(\inside{M}/B)}\longrightarrow \Hc^{\even}
\lrpar{T^*(\inside{M}/B)}.
\label{rccbif.20}\end{equation}
In fact, the identification, in the three cases, of the stable homotopy
classes of the Fredholm elements with the relative K-theory involves
non-trivial homotopies. As a result \eqref{rccbif.2} is not really a
`formula' for the families index. Here we give much more explicit formul\ae\ for
the Chern character in terms of the `symbolic' data determining the
Fredholm condition for the family. In each case this corresponds to the
ellipticity of the interior symbol, uniformly up to the boundary in an
appropriate sense, together with the invertibility of a boundary family. To
represent the Chern character we construct relative deRham chain complexes,
all with cohomology $\Hc^*(T^*(\inside{M}/B)),$ tailored to each setting and
then construct Chern-Weil forms depending on the leading symbolic
data. Then $\psi_!$ is the (generalized) integration map on cohomology from
$T^*(M/B)$ to $B.$ 

To explain the strategy behind these explicit versions of the formul\ae\
\eqref{rccbif.2}, consider the familiar case of an elliptic family of
(classical) pseudodifferential operators $A\in\Psi^m(M/B;\bbE)$ where
$\bbE=(E^+,E^-)$ is a superbundle (\ie a $\bbZ_2$-graded bundle) over $M$
and the fibration \eqref{rccbif.3} has compact boundaryless fibres. This is
the families setting of Atiyah and Singer and the analytic index is a map
as in \eqref{rccbif.1}, in this case the Chern character gives
\begin{equation*}
\Ch(\Ind(A)):K(T^*(M/B))\longrightarrow H^{\even}(B).
\end{equation*}
The K-class is fixed by the (invertible) symbol of $A,$
\begin{equation*}
	a=\sigma_m (A)\in\CI(S^*(M/B);\hom(\bbE)\otimes N_m)
\end{equation*}
where $N_m$ is a
trivial real line bundle capturing the homogeneity. Fedosov in \cite{MR1401125}
gives an explicit formula for the Chern character of the compactly
supported K-class determined by $\bbE$ and $a$ as a deRham class with
compact support on $T^*(M/B).$ Modifying his approach slightly we consider
the representation of $\Hc^*(W),$ for any real vector bundle $W$ over $M,$ as
the hypercohomology of the relative complex 
\begin{equation}
\CI(M;\Lambda ^*)\oplus\CI(\bbS W;\Lambda ^*),\ D=\begin{pmatrix}d&\\ -\pi^*&-d
\end{pmatrix}
\label{rccbif.5}\end{equation}
where $\Lambda^*$ denotes the bundle of forms, $\pi:W\longrightarrow M$ is the bundle projection, and $\bbS
W=(W\setminus O_M)/\bbR^+$ is the sphere bundle of $M.$ The Chern-Weil formula
given by Fedosov in terms of connections and curvatures, naturally fits
into this representation as the class
\begin{equation}
\begin{gathered}
\Ch([A])=\Ch(\bbE,a)=\Ch(\bbE)\oplus\tCh(a)\in\CI(M;\Lambda ^{\even})\oplus
\CI(S^*(M/B);\Lambda ^{\odd})
\\
\Ch(\bbE)=\tr e^{\omega_+}-\tr e^{\omega _-},
\\
\tCh(a)=-\frac1{2\pi i}
	\int_0^1 \tr\left(a^{-1}(\nabla a)e^{w(t)}\right)dt\Mwhere
\\
w(t)=(1-t)\omega _++ta^{-1}\omega _-a + \frac 1{2\pi i} t(1-t)(a^{-1}\nabla a)^2.
\end{gathered}
\label{rccbif.6}\end{equation}
The push-forward map on cohomology, $\psi_!,$ becomes integration of the
second form in \eqref{rccbif.5} and the Atiyah-Singer formula,
\eqref{rccbif.2}, becomes an explict fibre integral
\begin{equation}
\Ch(\Ind(A))=\int_{S^*(M/B)}\left(\tCh(a)\Td(\psi)\right).
\label{rccbif.123}\end{equation}
We proceed to discuss formul\ae\ essentially as explicit as
\eqref{rccbif.6} and \eqref{rccbif.123} for the Chern character of the index
bundle for families of pseudodifferential operators on a manifold with
boundary, where the uniformity conditions up to the boundary correspond to
one of the three cases mentioned above.

The simplest of the three cases, really because it is the most commutative,
corresponds to the scattering calculus of \cite{MR95k:58168}. This arises
geometrically in the context of asymptotically flat manifolds and in
particular includes constant-coefficient differential operators on a vector
space, thought of as acting on the radial compactification as a compact
manifold with boundary. Thus, the fibration \eqref{rccbif.3} now, and from
now on, has fibres which are compact manifolds with boundary and we
consider a `fully elliptic' family $A\in\Psi_{\scat}^m(M/B;\bbE).$ As well
as a uniform version of the symbol, $a=\sigma
(A)\in\CI(\scS^*(M/B);\hom(\bbE)\otimes N_m)$ (acting on a
boundary-rescaled version of the cosphere bundle) there is an
invariantly-defined boundary symbol $b=\beta(A)\in\CI(\comp{\scT^*_{\pa
    M}(M/B)};\hom(\bbE)\otimes N_m).$ Together these two symbols form a
smooth section of the bundle over the boundary of the radial
compactification $\comp{\scT^*(M/B)}$ which is continuous at the
corner. Full ellipticity of the family $A$ reduces to invertibility of this
joint symbol and this is equivalent to the requirement that $A$ be a family
of Fredholm operators on the natural geometric Sobolev spaces. Since
$\comp{\scT^*(M/B)}$ is a topological manifold with boundary, these full
symbols provide a chain space for the K-theory (with compact supports in
the interior) and the analytic index becomes a well-defined map as in
\eqref{rccbif.1}.

To give an explicit formula for the Chern character in this scattering
setting we use a similar relative chain complex to \eqref{rccbif.5}, now
associated to a vector bundle $\pi:W\longrightarrow M$ over a manifold with
boundary. Namely, 
\begin{equation}
\begin{gathered}
\CI(M;\Lambda^*)\oplus
\left\{(u,v);u\in\CI(\bbS W;\Lambda ^*),\ v\in\CI(\comp{W}_{\pa
  M};\Lambda ^*)\Mand \iota _{\pa }^*u=\iota _{\pa}^*v\right\},\\
D=\begin{pmatrix}d&0\\ \phi^*&-d\end{pmatrix}, \
\phi = \begin{pmatrix} -\pi^* \\ -\pi^*\iota^*_{\pa} \end{pmatrix}.
\end{gathered}
\label{rccbif.7}\end{equation}
The cohomology of this chain complex is $\Hc^*(\inside{W}),$ the compactly
supported cohomology of $W$ restricted to the interior of $M,$ and the
Chern character is represented by the explicit forms 
\begin{equation}
\Ch([A])=\Ch(\bbE,a,b)=\Ch(\bbE)\oplus(\tCh(a),\tCh(b))
\label{rccbif.8}\end{equation}
where $\tCh(a)$ and $\tCh(b)$ are given by Fedosov's formula
\eqref{rccbif.6}. The Atiyah-Singer formula \eqref{rccbif.2} then follows
and in this case it becomes quite clear that the boundary symbol is a
rather complete analogue of the usual symbol and enters into the index
formula in the same way.

The other two contexts in which we give such an explicit index formula are
similar, but more complicated and less symmetric between the boundary and
the cosphere bundle (the high-momentum limit) because there is more
residual non-commutativity at the boundary. Note that we are only
considering those settings in which the analytic index gives a map as in
\eqref{rccbif.1} (always the same map of course!) This restriction
corresponds to the fact, as we shall see, that the only non-commutativity
which remains at the boundary is in the normal variables -- tangential
non-commutativity, as occurs in the cusp or b-calculi, induces an analytic
index map from a different K-theory in place of the relative K-theory in
\eqref{rccbif.1} and this leads to more subtlety in the index formula. 

Consider next the `zero calculus' which corresponds geometrically to
asymptotically hyperbolic (or `conformally compact') manifolds and is so
named because it quantizates the Lie algebra of vector fields which vanish
at the boundary of any compact manifold with boundary (as opposed to the
scattering calculus which quantizes the smaller Lie algebra in which the
normal part also vanishes to second order). As noted this Lie algebra is not
commutative at the boundary, rather it is solvable with tangential part
forming an Abelian subalgebra on which the normal part acts by
homotheity. As a result the final formula has a truly regularized Chern
character, an eta form, coming from the normal part. Again we consider a
family of pseudodifferential operators, now $A\in\Psi^m_{0}(M/B;\bbE)$ for a
fibration \eqref{rccbif.3} with fibres compact manifolds with boundary
modelled on $Z.$ The `fully ellipticity' of such a family, corresponding to
its being Fredholm on the natural `zero' Sobolev spaces, reduces to the
invertibility of the (uniform) symbol, $a=\sigma
_m(A)\in\CI(\ZS^*(M/B);\hom(\bbE)\otimes N_m)$ together with the
invertibility of the normal operator, or equivalently the reduced normal
family. The former takes values in the bundle, over the boundary, of invariant
pseudodifferential operators on the Lie group associated to the tangent
solvable Lie algebra mentioned above. The condition here is invertibility
on the appropriate Sobolev spaces since this inverse, because of the
appearance of non-trivial asymptotic terms, lies in a larger space of
pseudodifferential operators on the group. The reduced normal operator, $\RN(A),$
corresponds to decomposition of the normal operator in terms of the
representations of the solvable group.

In the case of the zero calculus it is less obvious, but shown in
\cite{Albin-Melrose1}, that this symbolic data, the invertibility of which
fixes the Fredholm property of $A,$ also gives a chain space for the
relative K-theory $[A]=[(a,\RN(A))]\in K(T^*(M/B)).$ Similarly, the chain
complex leading again to cohomology with compact supports is more involved
and depends on more of the structure of the zero cotangent bundle. More
abstractly, consider a real vector bundle $W$ over $M$ with restriction to
the boundary having a trivial real line subbundle $L\subset W_{\pa M},$ set
$U=W_{\pa M}/L$ and 
\begin{equation}
\begin{gathered}
\CI(M;\Lambda ^k)\oplus\CI(\bbS W\cup\comp{L};\Lambda^{k-1})
\oplus\CI(\bbS U;\Lambda ^{k-3}),\\
\CI(\bbS W\cup\comp{L};\Lambda^{k-1})=
\left\{(a,\gamma)\in\CI(\bbS W;\Lambda ^{k-1})\oplus\CI(\comp{L};\Lambda^{k-1});
i^*_{\pm}a=i^*_{\pm}\gamma\right\}\\
D=\begin{pmatrix}d&0&0\\ \phi_1&-d&0\\
0&\phi_2&d
\end{pmatrix},\
\phi_1=\begin{pmatrix} -\pi^*_{\bbS W} \\
-\pi^*_{\bar L} \iota_{\pa}^*\end{pmatrix},\
\phi_2=\begin{pmatrix} -\nu^{\bbS W}_* , \pi^*_{\bbS U}\nu^{\bar L}_*\end{pmatrix}.
\end{gathered}
\label{rccbif.9}\end{equation}
Here $i_{\pm}$ are the restrictions to the two boundary components of the radial compactification of $L$, 
$\comp{L}$ (which are also submanifolds of $\bbS W),$ the maps $\pi$ are
the various bundle projections and $\nu^{\bbS W}_*,$ $\nu^L_*$ are (well-defined)
push-forward maps along the fibres of $L,$ the first from $\pa\bbS W$ to
$\bbS U$ and the second from $\comp{L}$ to $\pa M.$ As already anticipated
the cohomology of this complex is canonically isomorphic to the compactly
supported cohomology of $W$ over $\inside{M}.$

The reduced normal operator for the zero calculus is a family of
pseudodiferential operators on an interval, although with different
uniformity behaviour at the two ends. It is fixed by the choice of a
splitting of the zero cotangent bundle over the boundary and the choice of
a metric and is then naturally parametrized by $S^*(\pa M/B)$ which is $\bbS
U$ in the preceeding paragraph. In terms of the representation of the
cohomology with compact supports of $T^*(\inside{M}/B)$ given by
\eqref{rccbif.9}, with $W=\ZT^*(M/B),$ the Chern character is given by the
explicit forms 
\begin{equation}
\Ch([A])=\Ch(a,\RN(A))
=\Ch(\bbE)\oplus \left( \tCh(a), \tCh(I_b(\RN(A))) \right)\oplus (-\eta(\RN(A))),
\label{rccbif.10}\end{equation}
where $I_b(\RN(A))$ is the model operator of the reduced normal operator at
one end of the interval, known as the indicial family, and $\eta(\cN)$ is
given by an expression similar to that defining $\tCh(a)$ but taking into
account that the operators involved are not trace-class and do not commute,
namely
\begin{equation*}
-\frac1{2\pi i} \int_0^1 
\lrpar{1-t}\RTr_{\fbsc}\lrpar{\cN^{-1}\lrpar{\nabla\cN}e^{\omega_{\cN}\lrpar t}}
+t\; \RTr_{\fbsc}\lrpar{\lrpar{\nabla\cN}e^{\omega_{\cN}\lrpar t}\cN^{-1}} \;dt.
\end{equation*}
The renormalized trace appearing in this formula is defined in Appendix
\ref{sec:ResTrace} following previous work of the second author and Victor
Nistor \cite{Melrose-Nistor0}. As in the scattering case we obtain an
explicit representative of the Chern character.

Finally, we consider the calculus of Boutet de Monvel, the `transmission
calculus', which contains classical elliptic boundary value problems and
their parametrices. Geometrically this calculus corresponds to Riemannian
manifolds with boundary and so the connection to relative K-theory is
immediate and already present in \cite{BoutetdeMonvel1}.  An element of the
transmission calculus is represented by a matrix of operators
\begin{equation*}
	\cA=\begin{pmatrix}
	    	\gamma^+A+B &  & K \\ \\ T & & Q 
\end{pmatrix}
:\begin{matrix}
\CI\lrpar{X;E^+} & & \CI\lrpar{X;E^-} \\ \oplus & \to & \oplus \\ 
		\CI\lrpar{\pa X;F^+} & &\CI\lrpar{\pa X;F^-} 
	 \end{matrix}
\end{equation*}
acting on the superbundles $\bbE$ over $X$ and $\bbF$ over $\pa X$.
Whether or not a family $\cA \in \Psi^m_{\tm}(M/B;\bbE, \bbF)$ is Fredholm
is again determined by invertibility of two model operators, the interior
symbol and the boundary symbol. An operator is elliptic if the former is
invertible and `fully elliptic' if they are both invertible.  Similarly to
the zero calculus, the model operator at the boundary is a family of
pseudodifferential operators parametrized by the cosphere bundle of the
boundary, but here these operators are of Wiener-Hopf type.

As before there is a convenient description of the relative cohomology of
the cotangent bundle over the interior that is compatible with the
represention of a $K$-class by a fully elliptic family of transmission
operators.  For this consider $W,$ $L,$ and $U$ as described in
\eqref{rccbif.9} above, together with a restricted space of sections of
$\bbS W,$
\begin{equation*}
\CI_{\pm}\lrpar{\bbS W;\Lambda^*} 
= \lrbrac{\alpha \in \CI\lrpar{\bbS W;\Lambda^*} : 
i^*_{L^+}\alpha = i^*_{L^-}\alpha, \; i^*_{+} d\alpha = i^*_{-} d\alpha },
\end{equation*}
and form the complex
\begin{multline}
\CI(M;\Lambda^k) 
\oplus 
\lrpar{ \CI_{\pm}(\bbS W;\Lambda^{k-1}) 
\oplus \CI(\pa M;\Lambda^{k-2}) } 
\oplus \CI(\bbS U;\Lambda^{k-3}),
\\
D=\begin{pmatrix}d&0&0\\ \phi_1&-d&0\\
0&\phi_2&d
\end{pmatrix},\
\phi_1=\begin{pmatrix} -\pi^*_{\bbS W} \\ -i^*_{\pa M}\end{pmatrix},\
\phi_2=\begin{pmatrix} \nu^{\bbS W}_*, -\pi^*_{\bbS U} \end{pmatrix}.
\label{rccbif.9'}\end{multline}
with the notation as in the previous paragraph. The cohomology of this
complex is again canonically isomorphic to the compactly supported
cohomology of $W$ over $M^\circ$.

If $a$ and $N$ are respectively the interior symbol and boundary symbol of a fully elliptic family of transmission operators, then the Chern character of the associated $K$-theory class is represented by
\begin{equation*}
\Ch(\bbE, \bbF, a, N)
= (\Ch(\bbE), \lrpar{\wt\Ch(a), -\Ch(\bbE_{\pa} \oplus \bbF)}, -\eta(N))
\end{equation*}
in the complex \eqref{rccbif.9'} with $W$, $L$, and $U$ again equal to
$T^*(M/B),$ the normal bundle to the boundary, and $T^*(\pa M/B)$ respectively.
The form $\eta(N)$ is formally the same as in the zero calculus, but the
renormalization of the trace is done in a different 
fashion, due to Fedosov (see \eqref{FedosovTrace} below).

As already mentioned, central to this discussion is the fact that the
K-theory described by fully elliptic families in these calculi is the
topological K-theory of the cotangent bundle in the interior. A well-known
consequence is that quantization of an elliptic symbol to a Fredholm
operator is only possible if the `Atiyah-Bott obstruction' of the symbol
vanish.  One way around this is to quantize into other pseudodifferential
calculi. For instance, the $b$-calculus, which is well-adapted to manifolds
with asymptotically cylindrical ends, is {\em universal} in the sense that
any elliptic symbol can be quantized to a Fredholm operator. Whereas the
calculi described above are asymptotically non-commutative in (at most) the
normal direction, the $b$ calculus is in the same sense asymptotically
non-commutative in all directions. This is related to the fact that the eta
invariants described above are `local in the boundary and global in the
normal direction to the boundary' while the eta invariant in the
Atiyah-Patodi-Singer index theorem is global in the boundary. An analysis
of the Chern character for the $b$-calculus and related calculi is the
subject of an ongoing project of the second author with Fr\'ed\'eric Rochon
\cite{Melrose-Rochon1}.

This manuscript is divided into three parts.  Section \ref{sec:Rel} is
devoted to a discussion of cohomology. We describe the approach to relative
cohomology that we will follow together with some standard properties and
work out various ways of representing $H^*_c(T^*M^\circ/B)$.  In section
\ref{sec:Kthy} the represention of a class in $\Kc(T^*M^\circ/B)$ by
a family of fully elliptic operators in either the scattering, zero, or
transmission calculus is recalled.  Finally, in section \ref{sec:Chern}, we
put these discussions together and obtain explicit formul\ae for the Chern
character
\begin{equation*}
\Ch:\Kc(T^*M^\circ/B)\longrightarrow\Hce(T^*M^\circ/B)
\end{equation*}
as described above. In each case an explicit `Atiyah-Singer' formula for
the Chern character of the index bundle is given using only the appropriate
model operators. 

\paperbody

\section{Relative cohomology} \label{sec:Rel}

Suppose that $(\cC^*_i,d),$ $i=1,\dots,N,$ are $\bbZ$-graded differential
complexes and $\phi_i:\cC_i^k\longrightarrow\cC_{i+1}^{k+1-f_i}$ for $1\le
i<N,$ is a complex of chain maps between them, so $d\phi_i=\phi_{i+1}d.$
Then the complexes can be `rolled up' into one complex. In fact only the
cases $N=2$ and $N=3$ arise here, so consider first $N=2:$
\begin{equation}
\xymatrix{
\cdots\ar[r]^d&
\cC_1^{k-1}\ar[d]^{\phi}\ar[r]^d&
\cC_1^k\ar[d]^{\phi}\ar[r]^d&
\cC_1^{k+1}\ar[d]^{\phi}\ar[r]^d&
\cdots\\
\cdots\ar[r]^d&
\cC_2^{k-f}\ar[r]^d&
\cC_2^{k+1-f}\ar[r]^d&
\cC_2^{k+2-f}\ar[r]^d&
\cdots.
} 
\label{Z-i.71}\end{equation}
Then $\phi$ induces a map of the corresponding
hypercohomologies which we can also denote
$\phi:\cH^{k}_1\longrightarrow\cH^{k+1-f}_{2}.$ The relative chain complex
\begin{equation}
(\cC^k,D),\ \cC^k=\cC^{k}_1\oplus\cC^{k-f}_2,\ D=\begin{pmatrix}d&0\\ \phi&-d
\end{pmatrix}
\label{Z-i.60}\end{equation}
is such that inclusion and projection give an exact sequence
\begin{equation}
\cC_2^{k-f}\overset{\iota}\longrightarrow\cC^k\overset{p}\longrightarrow
\cC^{k}_1.
\label{Z-i.62}\end{equation}
The hypercohomology of \eqref{Z-i.60}, denoted
$\cH^*(\cC_*,\phi),$ may be computed from a spectral sequence, in this
case a rather simple one corresponding to the fact that the long exact
sequence associated to
\eqref{Z-i.62} is
\begin{equation}
\cdots\longrightarrow \cH_2^{k-f}\overset{\iota}\longrightarrow
\cH^k(\cC_*,\phi)\overset{p}\longrightarrow \cH_1^{k}
\overset{\phi}\longrightarrow\cH_2^{k-f}\cdots.
\label{Z-i.61}\end{equation}

For $N=3$ the $\phi_i$ give a commutative diagram
\begin{equation}
\xymatrix{
\cdots\ar[r]^d&
\cC_1^{k-1}\ar[d]^{\phi_1}\ar[r]^d&
\cC_1^k\ar[d]^{\phi_1}\ar[r]^d&
\cC_1^{k+1}\ar[d]^{\phi_1}\ar[r]^d&
\cdots\\
\cdots\ar[r]^d&
\cC_2^{k-f_1}\ar[d]^{\phi_2}\ar[r]^d&
\cC_2^{k+1-f_1}\ar[d]^{\phi_2}\ar[r]^d&
\cC_2^{k+2-f_1}\ar[d]^{\phi_2}\ar[r]^d&
\cdots\\
\cdots\ar[r]^d&
\cC_3^{k+1-f_1-f_2}\ar[r]^d&
\cC_3^{k+2-f_1-f_2}\ar[r]^d&
\cC_3^{k+3-f_1-f_2}\ar[r]^d&
\cdots\\
}
\label{Z-i.70}\end{equation}
The `rolled up' double complex is 
\begin{equation}
(\cC^k,D),\ \cC^k=\bigoplus_{i=1}^3\cC^{k}_i,\
D=\begin{pmatrix}
d&0&0
\\
\phi_1&-d&0\\
0&\phi_2&d
\end{pmatrix}.
\label{Z-i.72}\end{equation}
Of course, the second two rows in \eqref{Z-i.70} are an example of
\eqref{Z-i.71}, so give the complex 
\begin{equation}
(\tcC_2,\td),\ \tcC_2^k=\cC_2^k\oplus\cC_3^{k-f_2},\
\td=
\begin{pmatrix}d&0\\ \phi_2&-d
\end{pmatrix}
\label{Z-i.73}\end{equation}
such that 
\begin{equation}
\tcC_2^{k-f_1}\overset{\iota}\longrightarrow\cC^k\overset{p}\longrightarrow
\cC^{k}_1
\label{Z-i.74}\end{equation}
is a short exact sequence. The hypercohomologies therefore give long exact
sequences as in \eqref{Z-i.61}
\begin{equation}
\begin{gathered}
\cdots\longrightarrow \tcH_2^{k-f}\overset{\iota}\longrightarrow
\cH^k(\cC_*,\phi)\overset{p}\longrightarrow \cH_1^{k}
\overset{\phi_1}\longrightarrow\tcH_2^{k+1-f}\cdots\\
\cdots\longrightarrow \cH_3^{k-f}\overset{\iota}\longrightarrow
\tcH^k\overset{p}\longrightarrow\cH_2^{k}
\overset{\phi_2}\longrightarrow\cH_3^{k+1-f}\cdots.
\end{gathered}
\label{Z-i.75}\end{equation}
Thus the case $N>2$ reduces to an iteration of $N=2$ cases.

The most obvious case of relative cohomology in a deRham setting arises from
a smooth map between compact manifolds (possibly with corners)
\begin{equation}
\psi:M\longrightarrow X,\Mwith \cC_2=\CI(M;\Lambda ^*),\
\cC_1=\CI(X,\Lambda ^*)\Mand D=\begin{pmatrix}d&0\\
\psi^*&-d
\end{pmatrix}.
\label{Map}\end{equation}
In this case we may denote the relative cohomology by $\cH^*(M,\psi).$ Note that this is precisely how relative cohomology is defined in \cite[\S 6]{Bott-Tu}.

Several variants of relative cohomology, leading to standard cohomology
groups, are of interest here, only the last (and most fundamental for the
zero index formula) corresponds to $N=3.$

\subsection{Homotopy invariance and module structure}

In all of the cases we will consider below the complexes $\cC_i^*$ will be built up from differential forms and will inherit more structure. We point out some of these properties for later use.

\subsubsection{Module structure}
First, suppose that each of the $\cC_i$ has a graded product
\begin{gather*}
	\owedge_i: \cC_i^j \times \cC_i^k \to \cC_i^{j+k} \Mst\\
d\lrpar{\alpha \owedge_i \beta} 
= d\alpha \owedge_i \beta + \lrpar{-1}^{|\alpha|} \alpha\owedge_i d\beta,\
\phi_i\lrpar{\alpha \owedge_i \beta} = \phi_i\alpha \owedge_{i+1} \phi_i\beta.
\end{gather*}

\begin{lemma}
If $N=3$ and each $\cC_i$ has a graded product as above, then
$\cH^*\lrpar{\cC_*,\phi}$ is a module over $\cH^*\lrpar{\cC_1}$ through
\begin{equation*}
\cH^*\lrpar{\cC_*,\phi}\times\cH^*\lrpar{\cC_1}\ni
\lrpar{\alpha_i,\gamma} \overset\owedge \longmapsto
		\lrpar{\alpha_1\owedge_1\gamma, \; 
		\alpha_2\owedge_2\phi_1\gamma, \;
		\alpha_3\owedge_3\phi_2\phi_1\gamma)\in\cH^*\lrpar{\cC_*,\phi}}
\end{equation*}
\end{lemma}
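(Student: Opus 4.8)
The plan is to verify directly that the proposed formula defines a chain map on representatives and then check the module axioms at the level of cohomology. First I would unwind what a cocycle in $(\cC^*,D)$ with the $N=3$ differential \eqref{Z-i.72} actually is: a triple $(\alpha_1,\alpha_2,\alpha_3)$ with $d\alpha_1=0$, $\phi_1\alpha_1-d\alpha_2=0$, and $\phi_2\alpha_2+d\alpha_3=0$. Given a closed form $\gamma\in\cC_1$ (so $d\gamma=0$), I want to show the triple
\begin{equation*}
(\alpha_1\owedge_1\gamma,\ \alpha_2\owedge_2\phi_1\gamma,\ \alpha_3\owedge_3\phi_2\phi_1\gamma)
\end{equation*}
is again $D$-closed. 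This is a routine computation using the three Leibniz/compatibility identities in the hypotheses: for the first slot, $d(\alpha_1\owedge_1\gamma)=d\alpha_1\owedge_1\gamma\pm\alpha_1\owedge_1 d\gamma=0$; for the second slot one computes $\phi_1(\alpha_1\owedge_1\gamma)-d(\alpha_2\owedge_2\phi_1\gamma)=\phi_1\alpha_1\owedge_2\phi_1\gamma - d\alpha_2\owedge_2\phi_1\gamma\mp\alpha_2\owedge_2 d\phi_1\gamma$, and $d\phi_1\gamma=\phi_1 d\gamma=0$, so this collapses to $(\phi_1\alpha_1-d\alpha_2)\owedge_2\phi_1\gamma=0$; the third slot is entirely analogous, using $\phi_2(\alpha_2\owedge_2\phi_1\gamma)=\phi_2\alpha_2\owedge_3\phi_2\phi_1\gamma$ and $d\phi_2\phi_1\gamma=0$. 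Care with the signs coming from $|\alpha_i|$ versus $|\gamma|$ is needed but is purely bookkeeping; the key point is that every term involving $d\gamma$ or $d\phi_i\cdots\gamma$ drops because $\gamma$ is closed and $\phi_i$ commutes with $d$.

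Next I would check that the operation descends to cohomology, i.e. is independent of the choice of representatives. If $\gamma=d\beta$ is exact with $\beta\in\cC_1$, then $(\alpha_1\owedge_1 d\beta,\ \alpha_2\owedge_2\phi_1 d\beta,\ \alpha_3\owedge_3\phi_2\phi_1 d\beta)$ should be $D$-exact; the natural candidate primitive is $\pm(\alpha_1\owedge_1\beta,\ \alpha_2\owedge_2\phi_1\beta,\ \alpha_3\owedge_3\phi_2\phi_1\beta)$ and one applies $D$ to it, using the cocycle conditions on the $\alpha_i$ and the Leibniz rules, to recover the target up to sign. Similarly, if $(\alpha_1,\alpha_2,\alpha_3)=D(\mu_1,\mu_2,\mu_3)$ is $D$-exact, then its product with a closed $\gamma$ is $D$ applied to $(\mu_1\owedge_1\gamma,\mu_2\owedge_2\phi_1\gamma,\mu_3\owedge_3\phi_2\phi_1\gamma)$, again by a direct Leibniz computation. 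This shows $\owedge$ is well-defined on $\cH^*(\cC_*,\phi)\times\cH^*(\cC_1)$.

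Finally, the module axioms---bilinearity, associativity over the ring $\cH^*(\cC_1)$, and unitality if there is a unit---follow immediately from the corresponding properties of the products $\owedge_i$ and the fact that $\phi_1$ and $\phi_2\phi_1$ are (graded) ring homomorphisms by the hypothesis $\phi_i(\alpha\owedge_i\beta)=\phi_i\alpha\owedge_{i+1}\phi_i\beta$; no further choices enter. One can phrase the whole argument more structurally: the second two rows assemble via \eqref{Z-i.73} into $\tcC_2$, which is a module over $\cC_1$ via $\phi_1$, and then $(\cC^*,D)$ is the rolled-up $N=2$ complex of $\phi_1:\cC_1\to\tcC_2$ in the sense of \eqref{Z-i.60}, so it suffices to prove the $N=2$ statement: the rolled-up complex $\cC^k=\cC_1^k\oplus\cC_2^{k-f}$ is a module over $\cH^*(\cC_1)$ whenever $\cC_2$ is a differential graded module over $\cC_1$ compatibly with $\phi$. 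I expect the main (though still modest) obstacle to be tracking the Koszul signs consistently, in particular making sure the sign conventions in the off-diagonal $-d$ entries of $D$ match the Leibniz signs so that the candidate primitives really do map to the intended targets; once the sign conventions are pinned down the rest is forced.
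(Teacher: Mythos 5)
Your argument follows the same route as the paper's proof: compute $D\bigl((\alpha_i)\owedge\gamma\bigr)$ component-by-component using the Leibniz rules for $d$, the compatibility $d\phi_i=\phi_i d$, and the ring-map property of the $\phi_i$, then observe that every term containing $d\gamma$ drops when $\gamma$ is closed. Your proposal is somewhat more detailed in that you also sketch the verification that the operation is independent of representatives, which the paper only asserts.

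One point worth being precise about, since you flag sign-bookkeeping as the main obstacle: the candidate primitive you propose for the exact case, $\pm\bigl(\alpha_1\owedge_1\beta,\ \alpha_2\owedge_2\phi_1\beta,\ \alpha_3\owedge_3\phi_2\phi_1\beta\bigr)$ with a \emph{single} overall sign, works only when $f_1$ and $f_2$ are both odd. If you apply $D$ to $c\bigl(\alpha_1\owedge\beta,\ \alpha_2\owedge\phi_1\beta,\ \alpha_3\owedge\phi_2\phi_1\beta\bigr)$ with a $D$-closed $(\alpha_i)$ of degree $k$, the cocycle relations kill the $d\alpha_j$ cross-terms precisely when the scalar $c$ is taken the same in all three slots; the result is then $\bigl(c(-1)^{k}\alpha_1\owedge d\beta,\ -c(-1)^{k-f_1}\alpha_2\owedge\phi_1 d\beta,\ c(-1)^{k-f_1-f_2}\alpha_3\owedge\phi_2\phi_1 d\beta\bigr)$, which matches $(\alpha_i)\owedge d\beta$ only if $(-1)^{f_1}=-1$ and $(-1)^{f_1+f_2}=1$, i.e. both $f_i$ odd. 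This is exactly the case the paper singles out as the one where $D$ satisfies a Leibniz rule; the paper's phrase ``in any case it is true\dots'' leaves the remaining cases unargued just as your sketch does. In practice the lemma is invoked with $\phi_i$ genuine ring homomorphisms of unital graded algebras, which forces $f_i=1$ (an algebra map must preserve the unit's degree), so the issue is moot there; but your write-up should either impose that restriction explicitly or give the primitive componentwise signs rather than a single $\pm$, and then acknowledge that the naive primitive does not close up when $f_1,f_2$ have mixed parity. Your structural remark about first rolling up rows $2$ and $3$ via \eqref{Z-i.73} and reducing to the $N=2$ case is a valid and clean alternative organization of the same computation, and it would in fact make the parity constraint on each $f_i$ more transparent.
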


\begin{proof}
Notice that if $\lrpar{\alpha_i} \in \cC_*^k$ 
\begin{multline*}
	D\lrpar{\lrpar{\alpha_i}\owedge \gamma}
	= \begin{pmatrix}
	d\lrpar{\alpha_1\owedge_1\gamma} \\ 
	\phi_1 \lrpar{\alpha_1\owedge_1\gamma}
		- d\lrpar{\alpha_2\owedge_2\phi_1\gamma}\\
	\phi_2\lrpar{\alpha_2\owedge_2\phi_1\gamma}
		+d\lrpar{\alpha_3\owedge_3\phi_2\phi_1\gamma}
	  \end{pmatrix} \\
	= D\lrpar{\alpha_i} \owedge \gamma
	+\lrpar{-1}^k \begin{pmatrix}
	 \alpha_1  \\ 
		- \lrpar{-1}^{-f_1} \alpha_2\\
	\lrpar{-1}^{-f_1-f_2} \alpha_3 
	  \end{pmatrix} \owedge d\gamma.
\end{multline*}
Thus if $f_1$ and $f_2$ are odd $D$ satisfies a Leibnitz rule with respect
to $\owedge.$ In any case it is true that if $\gamma$ is closed and
$\lrpar{\alpha_i}$ is $D$-closed then $\lrpar{\alpha_i}\owedge\gamma$ is
$D$-closed and represents a class in $\cH^*\lrpar{M,\phi}$ depending only
on the class of $\gamma$ in $\cH^*\lrpar{\cC_1}$ and $\lrpar{\alpha_i}$ in
$\cH^*\lrpar{M,\phi}.$
\end{proof}

\subsubsection{Homotopy invariance}\label{sec:HtpyInv} Recall the usual
proof of homotopy invariance of the Chern character on a closed manifold.
A one-parameter family of connections on a fixed bundle over a space
$X,$ can be interpreted as a single connection on the same bundle
pulled-back to $X\times\lrspar{0,1}.$ Since the Chern character of this connection
is closed, the cohomology class of the Chern character of the family of
connections is constant in the parameter. What is important here is that the
space of forms on $X\times\lrspar{0,1}_r$ is really two copies of the space
of forms on $X,$
\begin{equation*}
	\Omega^*\lrpar{X\times\lrspar{0,1}_r}\
	\cong \Omega^*X \oplus dr\wedge \Omega^{*-1}X
\end{equation*}
and that the differential becomes $\begin{pmatrix} d & \\ \pa_r & -d
\end{pmatrix}$ with respect to this splitting.

The spaces $\cC_i$ below will generally be direct sums of spaces of
differential forms with perhaps some compatibility conditions. In order to
carry out the classical argument for the homotopy invariance of the Chern
character, assume that a one-parameter family of elements
$\lrpar{\alpha\lrpar r}$ of $\cC_*$ define an element of $\wt\cC_*$ with
\begin{equation}
	\wt\cC^i_* 
	= \cC^i_* \oplus \cC^i_{*-1} 
	\quad \lrpar{= \cC^i_* + dr\wedge\cC^i_{*-1} }
\label{NormalSplitting}\end{equation}
and that the action of $d$ and $\phi_i$ is extended to $\wt\cC_*$ so that
with respect to this splitting 
\begin{equation*}
	d = \begin{pmatrix}  d & \\ \pa_r & -d \end{pmatrix}
	\Mand \phi_i = \begin{pmatrix} \phi_i & \\ & \phi_i \end{pmatrix}.
\end{equation*}
Then $D$ acts on $\wt\cC_* = \cC_* \oplus \cC_{*-1}$ by
\begin{equation*}
\wt D = 
\begin{pmatrix} d & & & & & \\ \phi_1 & -d & & & & \\ & \phi_2 & d & & & \\
    \pa_r & & &-d & & \\ & -\pa_r & & \phi_1 & d & \\ & & \pa_r & & \phi_2 & -d
\end{pmatrix}
\end{equation*}

\begin{lemma}\label{HomotopyInv}
If the spaces $\cC^i_*$ have the properties of differential forms as
described above and $\lrpar{\wt\alpha\lrpar r}$ is a $\wt D$ closed element
of $\wt\cC_*$ then $\wt\alpha\lrpar 0$ and $\wt\alpha\lrpar 1$ are
cohomologous in $\cC_*.$ 
\end{lemma}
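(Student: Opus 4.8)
The plan is to adapt, essentially verbatim, the classical homotopy-invariance argument recalled just above: on the cylinder the chain homotopy realizing the equality of the two endpoint restrictions is fibre integration $\int_0^1(\cdot)\,dr$, which trades the $dr$-component of a $\wt D$-closed element for a $D$-primitive of the difference of its endpoint restrictions. Concretely I would introduce a degree $-1$ operator $H\colon\wt\cC^k_*\to\cC^{k-1}_*$ that extracts the $dr$-component of an element of $\wt\cC_*=\cC_*\oplus\cC_{*-1}$ and integrates it over $r\in[0,1]$, and reduce the lemma to the chain-homotopy identity $DH+H\wt D=\mathrm{ev}_1-\mathrm{ev}_0$ on all of $\wt\cC_*$, where $\mathrm{ev}_0,\mathrm{ev}_1\colon\wt\cC^k_*\to\cC^k_*$ are restriction to the two endpoints. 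Granting this identity the lemma is immediate: writing $\alpha(r)\in\cC_*$ for the $\cC_*$-component of $\wt\alpha$, the top three rows of $\wt D\wt\alpha=0$ say exactly that $D\alpha(r)=0$ for every $r$ (they involve no $\pa_r$), so $\wt\alpha(0)=\alpha(0)$ and $\wt\alpha(1)=\alpha(1)$ are $D$-closed, while the identity gives $\alpha(1)-\alpha(0)=D(H\wt\alpha)$, i.e. they are cohomologous in $\cC_*$.

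The one real subtlety is sign bookkeeping. Since $D$ carries the alternating signs $d,-d,d$ on the summands $\cC_1,\cC_2,\cC_3$, the differential $\wt D$ couples the two copies of $\cC_*$ by $\diag(\pa_r,-\pa_r,\pa_r)$, and the naive fibre integration does not pair cleanly with it. I would fix this by twisting $H$ with the sign $\diag(1,-1,1)$, i.e. $H\wt\alpha=\lrpar{\int_0^1\gamma_1\,dr,\ -\int_0^1\gamma_2\,dr,\ \int_0^1\gamma_3\,dr}$ where $\gamma_1,\gamma_2,\gamma_3$ are the $dr$-components of $\wt\alpha$. The cleanest way to see this is the right twist — and to organise the verification — is to note that conjugating $\wt D$ by $\diag(\Id,\diag(1,-1,1))$ on the two copies sends its lower-right block $\begin{pmatrix}-d&&\\ \phi_1&d&\\ &\phi_2&-d\end{pmatrix}$ to $-D$ and the coupling to $\pa_r\,\Id$, so that $\wt D$ becomes precisely the standard cylinder differential $\begin{pmatrix}D&0\\ \pa_r&-D\end{pmatrix}$ of the form in \eqref{Z-i.60}; in those coordinates $H$ is literally fibre integration and $DH+H\wt D=\mathrm{ev}_1-\mathrm{ev}_0$ is the textbook computation. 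In other words the lemma just says that the algebraic mapping cylinder of $(\cC_*,D)$ behaves as expected.

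The verification of the identity is then a short component-by-component computation using only that $\pa_r$ commutes with $d,\phi_1,\phi_2$ (none of which sees the homotopy parameter), that the $\phi_i$ are chain maps, and the fundamental theorem of calculus $\int_0^1\pa_r\alpha_i\,dr=\alpha_i(1)-\alpha_i(0)$. The single point that needs care is that the terms $\int_0^1\phi_i\gamma_i\,dr$ produced separately by $DH$ and by $H\wt D$ cancel, leaving only the endpoint contributions — which is exactly what the sign twist makes automatic, and is the only place I expect any friction. The case $N=2$ is the same with the third row and column deleted and the twist reduced to $\diag(1,-1)$; the same $H$ and identity also show that the two endpoint inclusions induce the same map on $\cH^*(\cC_*,\phi)$.
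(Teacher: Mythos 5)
Your proof is correct and takes essentially the same route as the paper: the paper also integrates the $dr$-component of $\wt\alpha$ over $[0,1]$ with the sign twist $(1,-1,1)$ and uses the $\wt D$-closed equations to identify $\alpha_t(1)-\alpha_t(0)$ with $D$ of that primitive. Your packaging of this as the chain-homotopy identity $DH+H\wt D=\mathrm{ev}_1-\mathrm{ev}_0$, together with the conjugation by $\diag(\Id,\diag(1,-1,1))$ that recasts $\wt D$ as the standard cylinder differential, is a tidy and correct reformulation of the same computation.
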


\begin{proof}Write
\begin{equation*}
	\wt\alpha^i 
	= \lrpar{\alpha^i_t, \alpha^i_n}
	\quad \lrpar{ = \alpha^i_t + dr\wedge \alpha^i_n }
\end{equation*}
with respect to the splitting \eqref{NormalSplitting}. Then the fact that
$\lrpar{\wt\alpha}$ is $\wt D$-closed implies that
\begin{multline*}
	i^*_0\wt\alpha - i^*_1\wt\alpha
	= \int_0^1 \pa_r\lrpar{\alpha_t} \;dr 
	= \int_0^1 \lrpar{ d\alpha^1_n, \phi_1\alpha^1_n - d\alpha^2_n,
		 -\phi_2\alpha^2_n + d\alpha^3_n} \;dr \\
	= D\lrpar{ \int_0^1 \lrpar{\alpha^1_n, -\alpha^2_n, \alpha^3_n} \;dr},
\end{multline*}
hence $	i^*_0\wt\alpha$ and $i^*_1\wt\alpha$ represent the same class in
the cohomology of $\lrpar{\cC_*,D}.$ 
\end{proof}

\subsubsection{Push-forward}
Notice that, if $M$ is oriented and $\dim Y < \dim M,$ there is a
well-defined integral
\begin{equation}
\int_M:\cH(M,\psi)\longrightarrow \bbC.
\label{Z-i.11}\end{equation}
Indeed, two representatives of the same class in $\cH\lrpar{M,\psi}$ differ
an element in the image of $D,$ i.e. of the form $\lrpar{u,\psi^*u-dv}$,
but $\int_M dv=0$ by Stoke's theorem and $\int_M \psi^*u=0$ for dimensional
reasons, so the value of the integral is not affected.

More generally, if we have a pair $(\cC^*_1\oplus \cC^*_2)$,
$(\wt\cC^*_1\oplus \wt\cC^*_2)$ of $\bbZ$-graded complexes with chain
maps $\phi$, $\wt\phi$ as in \eqref{Z-i.60}, and maps between them
\begin{equation*}
\cC_i^* \xrightarrow{\psi_i} \wt\cC_i^{*-\ell}
\end{equation*}
satisfying $d\psi_i = \psi_i d$ and fitting into the commutative diagram
\begin{equation}
\xymatrix{ \cC_1^k \ar[r]^-{\phi} \ar[d]^{\psi_1} & \cC_2^{k+1-f}
\ar[d]^{\psi_2} \\
\wt\cC_1^{k-\ell} \ar[r]^-{\wt\phi} & \wt\cC_2^{k-\ell+1-f} }
\label{PreCohoMap}\end{equation}
we get a map 
\begin{equation}
\cH^*\lrpar{\cC_*,\phi} \xrightarrow{\psi} \cH^{*-\ell}\lrpar{\wt\cC_*,\wt\phi}.
\label{InducedCohoMap}\end{equation}

This situation occurs for instance for pull-back diagrams: Assume
$X\xrightarrow{f} Z$ is a fibration of oriented manifolds and
$E\xrightarrow{\pi} Z$ is a vector bundle, then we have
\begin{equation*}
\xymatrix{ f^*\bbS E \ar[r]^{\wt\pi} \ar[d]^{\wt f} & X \ar[d]^f \\
\bbS E \ar[r]^\pi & Z }
\phantom{xx}
\underset{\underset{\underset{\Longrightarrow}{}}{}}{}
\phantom{xx}
\xymatrix{ \Omega^*\lrpar{f^*\bbS E}  \ar[d]^{\wt f_*} &
\Omega^*X \ar[d]^{f_*} \ar[l]_-{\wt\pi^*} \\
\Omega^{*-\ell}\lrpar{\bbS E}  & \Omega^{*-\ell}Z \ar[l]_-{\pi^*}}
\end{equation*}
where $\ell=\dim\lrpar{X/Z}$ and $f_*$ denotes the push-forward of
differential forms by `integration along the fibers'. Then
\eqref{PreCohoMap} commutes (essentially because the fibers of $f$ and $\wt
f$ coincide) and we get a map $\cH^*\lrpar{f^*\bbS E, \wt\pi} \to
\cH^*\lrpar{\bbS E, \pi}$ induced by $f$ via \eqref{InducedCohoMap}, which
we can call push-forward by $f$.

Alternately, consider a smooth fibration $N \xrightarrow{h} \Gamma$ with
$\dim N >\dim \Gamma$ and the vertical cosphere bundle
$\bbS^*\lrpar{N/\Gamma} \xrightarrow{\pi} N$.  We can use
\eqref{InducedCohoMap} to get a map
\begin{equation}
\cH^*\lrpar{\bbS^*\lrpar{N/\Gamma},\pi} \xrightarrow{h_*} H^*\lrpar Y
\label{FibPush}\end{equation}
by taking $\ell=2\dim \lrpar{N/\Gamma}-1$, $\wt\cC_1^* = 0$,
$\wt\cC_2^*=\Omega^*Y$ and mapping between $\Omega^*\bbS^*\lrpar{N/\Gamma}$
and $\Omega^*Y$ by the push-forward of forms along the map $h\circ\pi$. In
this case \eqref{PreCohoMap} commutes because forms pulled back from $N$
push-foward to zero along $h\circ\pi$. This push-forward map will be used
in the formula for the families index theorem below. (Note that
\eqref{Z-i.11} is a particular case with $\Gamma=\{pt\}$).

\subsection{Manifold with boundary}\label{rel-mwb}
A standard case of \eqref{Map} arises when $X$ is a compact manifold with
boundary and $\psi$ is the inclusion map for the boundary. Thus
$\cC_1=\CI(X;\Lambda ^*)$ and $\psi=\iota_{\pa}:\pa X\hookrightarrow X,$
$\cC_2=\CI(\pa X;\Lambda^*).$ 
Then

\begin{lemma}\label{rccbif.12} For $\iota:\pa X\longrightarrow X$ the
 inclusion of the boundary of a compact manifold with boundary and
\begin{equation}
\begin{gathered}
\cC_*^k=\CI(X;\Lambda^k)\oplus\CI(\pa X;\Lambda^{k-1}),\\
\cH^k(\cC_*,\phi)=\cH^k(\pa X,\iota)=H^k(X;\pa X)=\Hc^k(X\setminus\pa X)
\end{gathered}
\label{Z-i.86}\end{equation}
is relative cohomology in the usual sense.
\end{lemma}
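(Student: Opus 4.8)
The first equality in \eqref{Z-i.86} is just the notation of \eqref{Map} with $\psi=\iota$, so the content is the identification of $\cH^k(\cC_*,\phi)$ with $H^k(X;\pa X)$ and with $\Hc^k(X\setminus\pa X)$. The plan is to feed the situation into the machinery of \S\ref{sec:Rel}: here $N=2$, $f=1$, $\cC_1=\CI(X;\Lambda^*)$, $\cC_2=\CI(\pa X;\Lambda^*)$ and $\phi=\iota^*$. Each $\cC_i$ is a single de Rham complex, so its hypercohomology is ordinary cohomology and, by the de Rham theorem, $\cH_1^*=H^*(X)$, $\cH_2^*=H^*(\pa X)$, with $\phi$ inducing the pullback $\iota^*\colon H^*(X)\to H^*(\pa X)$ on cohomology. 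Substituting these into the long exact sequence \eqref{Z-i.61} attached to the inclusion--projection sequence \eqref{Z-i.62} turns it into
\begin{equation*}
\cdots\to H^{k-1}(X)\xrightarrow{\iota^*}H^{k-1}(\pa X)\to\cH^k(\cC_*,\phi)\to H^k(X)\xrightarrow{\iota^*}H^k(\pa X)\to\cdots,
\end{equation*}
which has, term by term and (away from the middle groups) map by map, exactly the shape of the long exact sequence of the pair $(X,\pa X)$.

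To upgrade this to an isomorphism $\cH^k(\cC_*,\phi)\cong H^k(X;\pa X)$ I would produce a morphism between these two long exact sequences and invoke the five lemma. The needed vertical map $\cH^k(\cC_*,\phi)\to H^k(X;\pa X)$ is integration of relative forms over relative chains: a $D$-closed element is a pair $(\omega,\theta)$ with $\omega\in\CI(X;\Lambda^k)$, $\theta\in\CI(\pa X;\Lambda^{k-1})$, $d\omega=0$ and $\iota^*\omega=d\theta$, and it is paired with a smooth singular chain $c$ in $X$ whose boundary lies in $\pa X$ by $(\omega,\theta)\mapsto\bigl(c\mapsto\int_c\omega-\int_{\pa c}\theta\bigr)$. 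One checks (Stokes on $X$ and on $\pa X$) that this vanishes on chains in $\pa X$, is a relative cocycle, depends only on the $D$-cohomology class, and is compatible with the inclusion, projection and connecting maps of both sequences; since it restricts to the de Rham isomorphisms on $\cC_1$ and $\cC_2$, the five lemma gives the middle isomorphism. Equivalently one may simply cite \cite[\S 6]{Bott-Tu}, where the relative cohomology of the pair is \emph{defined} by precisely the complex \eqref{Z-i.86}, as already remarked after \eqref{Map}.

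It then remains to recall the standard identification $H^k(X;\pa X)=\Hc^k(X\setminus\pa X)$: since $X$ is a compact manifold with boundary, a collar neighbourhood $\pa X\times[0,1)\hookrightarrow X$ identifies the one-point compactification of the open manifold $\inside X=X\setminus\pa X$ with the quotient $X/\pa X$, whence $\Hc^k(\inside X)\cong\wt{H}^k(X/\pa X)\cong H^k(X;\pa X)$ because $(X,\pa X)$ is a good pair. I expect the only delicate point to be the verification underlying the five-lemma step, namely that the connecting homomorphism produced by `rolling up' the complex agrees, up to sign, with the geometric connecting map of the pair -- equivalently, that the mapping cone on $\iota^*\colon\CI(X;\Lambda^*)\to\CI(\pa X;\Lambda^*)$ genuinely computes $H^*(X,\pa X)$ rather than merely a group sitting in a formally identical exact sequence. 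The de Rham theorem, the five lemma, and the last identification are otherwise routine.
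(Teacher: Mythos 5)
Your proof is correct, but it takes a genuinely different route from the paper's. You identify the long exact sequence of the rolled-up complex with the long exact sequence of the pair $(X,\pa X)$, build a comparison map by integration over relative chains, and close with the five lemma and the standard identification $H^k(X;\pa X)\cong\Hc^k(X\setminus\pa X)$ via the collar/quotient. The paper instead argues at the chain level: given a $D$-closed pair $(u,v)$ it constructs, using a collar $[0,1)_x\times\pa X$ and the normal/tangential decomposition of $u$, an explicit $T(u,v)=(\rho(x)\int_0^x u_n(s)\,ds+\rho(x)v,\,0)$ with $DT(u,v)-(u,v)=(\omega,0)$ and $\omega\in\CIc(\inside X;\Lambda^k)$. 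This shows the complex retracts onto the subcomplex of forms compactly supported in $\inside X$, which computes $\Hc^*(\inside X)$ directly, bypassing singular cohomology and the five lemma entirely. The two approaches buy different things: yours is cleaner homological algebra and makes the comparison with \cite[\S 6]{Bott-Tu} explicit (as the paper itself notes just after \eqref{Map}, the complex here is literally Bott--Tu's definition of relative de Rham cohomology, so the middle equality is essentially definitional once one has the de Rham theorem); the paper's explicit retraction is deliberately chosen because it generalizes with minimal changes to the more elaborate complexes of \S\ref{rel-scat} and \S\ref{Zero-ind-coh}, where a five-lemma argument would require re-identifying each term with a known cohomology group. Your acknowledged delicate point --- checking that the rolled-up connecting map agrees (up to sign) with the geometric one --- is real but routine, and is exactly the kind of bookkeeping the paper's constructive argument avoids. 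Note also that your retraction argument only establishes surjectivity onto the compactly supported classes as stated; for a complete five-lemma-free version one would also need the injectivity half, i.e.\ that a compactly supported $\omega$ with $(\omega,0)=D(u,v)$ is $d$ of a compactly supported form, which follows from the same collar construction.
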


\begin{proof} This is completely standard but a brief proof is included for the
sake of completeness and for later generalization. 

If $\lrpar{u,v}\in \cC_k$ satisfies $D\lrpar{u,v}=0$ then there is
$T\lrpar{u,v}\in \cC_{k-1}$ such that $DT\lrpar{u,v}-\lrpar{u,v} =
\lrpar{\omega,0}$ with
\begin{equation}
\omega \in \CIc(\inside X;\Lambda ^k).
\label{Z-i.34}\end{equation}

Taking a product neighbourhood of the boundary, $U=[0,1)_x\times\pa X,$ and
  denoting $Y=\pa X$ a smooth form decomposes on $U$ as
\begin{equation}
u=u_{t}(x)+dx\wedge u_n(x),\ u_t\in\CI([0,1)\times Y;\Lambda ^kY),\
  u_n\in\CI([0,1)\times Y;\Lambda ^{k-1}Y) 
\label{Z-i.30}\end{equation}
with differential 
\begin{equation}
du=d_Yu_t+dx\wedge \lrpar{\frac{\pa}{\pa x}u_t(x)-d_Yu_n(x)}.
\label{Z-i.31}\end{equation}
So if $(u,v)$ satisfies $D(u,v)=0,$
\begin{equation}
d_Yu_t=0,\ \frac{\pa}{\pa x}u_t(x)-d_Yu_n(x)=0\text{ in } x<1,\ dv=u_t(0).
\label{Z-i.32}\end{equation}
Choosing a cutoff function $\rho \in\CI([0,1])$ with $\rho (x)=1$ in
$x<\frac12,$ $\rho (x)=0$ in $x>\frac34,$ consider 
\begin{equation}
	T(u,v)=\lrpar{ \rho(x) \int_0^x u_n(s) \;ds + \rho(x)v , 0 }.
\label{Z-i.33}\end{equation}
This satisfies $DT(u,v)=(u+\omega,v)$ where $\omega\in\CI(X;\Lambda ^k)$ has support away
from the boundary as required in \eqref{Z-i.34}.
Thus the complex retracts to the subcomplex of deRham forms with compact
support in the interior and the cohomology is therefore the cohomology of
$X$ relative to its boundary.
\end{proof}

\begin{corollary}\label{rccbif.13} 
\label{Sphere-bundle} A particular case of \S\ref{Map} arises with
$X=\comp{U},$ the radial compactification of a real vector bundle $U$ over
a compact manifold $Y$ and $\pa X=\bbS U=(U\setminus
0_Y)/\bbR^+$ the sphere bundle, so
\begin{equation}
\cH^*(\pa X,\iota)=\Hc^*(U).
\label{rccbif.14}\end{equation}
\end{corollary}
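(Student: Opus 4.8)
The plan is to apply Lemma \ref{rccbif.12} directly, so the only real content is to identify the two cohomology groups that appear there in the special case $X = \comp{U}$, $\pa X = \bbS U$. First I would recall the setup of \S\ref{Map} and Lemma \ref{rccbif.12}: for a compact manifold with boundary $X$ with boundary inclusion $\iota\colon \pa X \hookrightarrow X$, the relative complex $\cC_*^k = \CI(X;\Lambda^k)\oplus\CI(\pa X;\Lambda^{k-1})$ with differential $D = \begin{pmatrix} d & 0 \\ \iota^* & -d\end{pmatrix}$ has hypercohomology $\cH^*(\pa X,\iota) = H^*(X;\pa X) = \Hc^*(X\setminus \pa X)$. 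Taking $X=\comp U$, the radial compactification of the vector bundle $\pi\colon U\to Y$, and recalling that by definition $\pa\comp U = \bbS U = (U\setminus 0_Y)/\bbR^+$, Lemma \ref{rccbif.12} immediately gives $\cH^*(\bbS U,\iota) = \Hc^*(\comp U\setminus \bbS U)$.

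The remaining step is to observe that $\comp U \setminus \bbS U = \comp U \setminus \pa\comp U$ is the interior of the radial compactification, which is canonically diffeomorphic to the total space $U$ itself (radial compactification adds a sphere at infinity to each fibre but leaves the open vector bundle untouched). Hence $\Hc^*(\comp U\setminus\bbS U) = \Hc^*(U)$, which is exactly \eqref{rccbif.14}. One should note that $\comp U$ is a manifold with boundary (each fibre $\comp{U_y}\cong \comp{\bbR^n}$ is a closed ball, and these fit together smoothly since $Y$ is compact and without boundary), so Lemma \ref{rccbif.12} genuinely applies; this is the one point worth making explicit rather than taking for granted. If one wants the statement with a chosen fibre metric, the diffeomorphism $\comp U \setminus \bbS U \cong U$ can be written down fibrewise as $[0,1)_r \times \bbS_y U \ni (r,\theta)\mapsto \tfrac{r}{1-r}\theta \in U_y$, but no such choice is needed for the cohomological conclusion since compactly supported de Rham cohomology is a diffeomorphism invariant.

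There is essentially no obstacle here: the corollary is a direct specialization of the preceding lemma together with the elementary fact that radial compactification does not change the interior. The only thing to be careful about is the bookkeeping of which space plays the role of $X$ and which plays the role of $\pa X$ — once $\pa\comp U = \bbS U$ is recorded, everything follows. I would therefore present the proof as a single short paragraph invoking Lemma \ref{rccbif.12} and the identification of the interior of $\comp U$ with $U$, perhaps with a one-line parenthetical reminder that $\comp U$ is indeed a compact manifold with boundary when $Y$ is compact.
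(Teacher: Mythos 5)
Your proof is correct and is precisely the intended argument: the paper presents this as an immediate corollary of Lemma~\ref{rccbif.12}, obtained by taking $X = \comp{U}$, noting $\pa\comp{U} = \bbS U$, and identifying the interior $\comp{U}\setminus\bbS U$ with $U$. The extra remarks you include (that $\comp U$ is a compact manifold with boundary, and the explicit fibrewise diffeomorphism) are sound and make the specialization explicit, but the substance matches the paper's route exactly.
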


\subsection{Sphere bundle of a real vector bundle} \label{SphereBundle}

Although Corollary~\ref{rccbif.13} is a `compact' representation of the
compactly-supported cohomology of a real vector bundle over a manifold it
is not the most natural one for index theory. In view of the contractibility of
the fibres, instead of the inclusion of the sphere bundle as the boundary of
the radial compactification of $W$ we may consider instead simply the
projection
\begin{equation}
\pi:\bbS W\longrightarrow X.
\label{rccbif.15}\end{equation}

Denote by $\cH^k\lrpar{\bbS W,\pi}$ the cohomology of the complex 
\begin{equation}
	\CI(X;\Lambda ^*)\oplus\CI(\bbS W;\Lambda ^{*-1}),\quad
	D=\begin{pmatrix} d & 0 \\ -\pi^* & -d \end{pmatrix}
\label{rccbif.17}\end{equation}
(We get the same cohomology with $\pi^*$ instead of $-\pi^*$, but the latter leads to better signs in the expressions for the Chern characters below.)

\begin{lemma}\label{rccbif.16} For any real vector bundle over a compact
  manifold without boundary,
\begin{equation*}
	\cH^k\lrpar{\bbS W,\pi} \cong H^k_c\lrpar W.
\end{equation*}
\end{lemma}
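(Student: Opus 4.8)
The plan is to reduce this to Corollary~\ref{rccbif.13}, which already identifies $\cH^*(\bbS W,\iota)$ with $\Hc^*(W)$ for $\iota\colon\bbS W\hookrightarrow\comp W$ the inclusion of the boundary of the fibrewise radial compactification $\comp W$ of $W$. Since the two complexes \eqref{rccbif.17} and \eqref{Z-i.86} (the latter applied with $\comp W$ in place of $X$, so $\pa(\comp W)=\bbS W$) share the same second summand $\CI(\bbS W;\Lambda^{*-1})$ and differ only in the first summand and the chain map into the second, it suffices to produce a natural isomorphism $\cH^*(\bbS W,\pi)\cong\cH^*(\bbS W,\iota)$ and then invoke Corollary~\ref{rccbif.13}.

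To build that isomorphism, let $\bar\pi\colon\comp W\to X$ be the extension of the bundle projection to the radial compactification; it is a smooth fibration of compact manifolds (with $\comp W$ a manifold with boundary $\bbS W$) whose fibres are closed balls, hence contractible, so $\bar\pi$ is a homotopy equivalence and $\bar\pi^*\colon\CI(X;\Lambda^*)\to\CI(\comp W;\Lambda^*)$ is a quasi-isomorphism by homotopy invariance of deRham cohomology. Since $\bar\pi\circ\iota=\pi$ on $\bbS W$, the pair $(\bar\pi^*,\operatorname{Id})$ is a morphism of relative complexes
\[
\bigl(\CI(X;\Lambda^*)\oplus\CI(\bbS W;\Lambda^{*-1}),\ D_\pi\bigr)\longrightarrow
\bigl(\CI(\comp W;\Lambda^*)\oplus\CI(\bbS W;\Lambda^{*-1}),\ D_\iota\bigr),
\]
the required compatibilities being exactly $\bar\pi^*d=d\bar\pi^*$ and $\iota^*\bar\pi^*=\pi^*$ (after fixing the same sign convention in the lower-left entry of $D$, which by the remark following \eqref{rccbif.17} changes nothing).

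This morphism induces a map between the long exact sequences \eqref{Z-i.61} attached to the two relative complexes; on the $\cC_1$-cohomologies it is $\bar\pi^*\colon H^*(X)\to H^*(\comp W)$, and on the $\cC_2$-cohomologies it is the identity of $H^*(\bbS W)$. Both are isomorphisms — the first by the homotopy equivalence $\bar\pi$ — so the five lemma yields $\cH^k(\bbS W,\pi)\xrightarrow{\ \cong\ }\cH^k(\bbS W,\iota)$, and composing with Corollary~\ref{rccbif.13} gives $\cH^k(\bbS W,\pi)\cong\Hc^k(W)$.

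I do not expect a genuine obstacle here; the one point that wants a moment's care is that the comparison square commutes \emph{on the nose} (same space $\CI(\bbS W;\Lambda^*)$ on both sides, and $\bar\pi\circ\iota=\pi$ exactly), so that naturality of \eqref{Z-i.61} and the five lemma apply verbatim, and that the two complexes are set up with compatible signs. An alternative, more self-contained route would bypass $\comp W$ and imitate the explicit contraction of Lemma~\ref{rccbif.12}: given $D_\pi$-closed $(u,v)$, use a fibrewise radial integration in $W$ to produce an operator $T$ with $D_\pi T(u,v)-(u,v)$ supported near the zero section, exhibiting $\cH^*(\bbS W,\pi)$ directly as $\Hc^*(W)$; but the comparison argument above is shorter and reuses what has already been established.
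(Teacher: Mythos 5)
Your argument is correct, and it takes a somewhat different route from the paper while sharing the same ultimate tool (the Five Lemma). The paper instead constructs an explicit chain map
\[
\Phi\colon\CIc(W;\Lambda^k)\longrightarrow\CI(X;\Lambda^k)\oplus\CI(\bbS W;\Lambda^{k-1}),\qquad
\Phi(u)=\bigl(i_0^*u,\ (-1)^{k-1}R_*u\bigr),
\]
directly from compactly supported forms on $W$ (restriction to the zero section, plus a signed radial push-forward), verifies $\Phi\circ d = D\circ\Phi$ by a short computation in polar coordinates, and then uses the commutative ladder of long exact sequences — whose bottom row is the $(\comp W,\bbS W)$ relative sequence, i.e.\ the content of Corollary~\ref{rccbif.13} — plus the Five Lemma. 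Your proof replaces the explicit $\Phi$ with the purely diagrammatic morphism $(\bar\pi^*,\pm\operatorname{Id})$ between the two relative complexes, using that $\bar\pi\colon\comp W\to X$ is a homotopy equivalence with ball fibres, and then composes with Corollary~\ref{rccbif.13}; it is shorter and stays entirely inside the relative-complex framework. The sign point you flagged is handled correctly: with the convention $-\pi^*$ fixed in \eqref{rccbif.17} one should take $\psi_2=-\operatorname{Id}$ so that $\wt\phi\circ\psi_1 = \iota^*\bar\pi^* = \pi^* = \psi_2\circ(-\pi^*)$, or equivalently change the sign in \eqref{rccbif.17} as the remark permits. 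The one thing your route does \emph{not} supply is the explicit formula for $\Phi$, which the paper reuses later (e.g.\ in \eqref{MapPhiL} and in checking commutativity of the middle square in the proof around \eqref{Z-i.88}); for the statement of Lemma~\ref{rccbif.16} alone this is immaterial, but it explains why the paper prefers the more constructive version.
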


\begin{proof}

Recall that the cohomology with compact supports of $W$ may be represented
by the deRham cohomology of smooth forms with compact support on $W.$ Let
$i_0:X\hookrightarrow W$ be the inclusion of the zero section and choose a
metric on $W$ so as to have a product decomposition
\begin{equation}
W\setminus i_0\lrpar X=\bbS W\times\bbR^+
\label{Z-i.3}\end{equation}
and denote the projection onto the left factor by $R$.

Given a closed $k$-form $u$ on $W$ with compact support consider the map
\begin{equation}
\Phi\lrpar u =
\lrpar{ i_0^*u, (-1)^{k-1} R_*u } \in \CI(X;\Lambda ^k)\oplus\CI(\bbS W;\Lambda ^{k-1}).
\label{MapPhi}\end{equation}
We can think of introducing polar coordinates as pulling-back to the space $\bbS W \times \bbR^+$, say via a map $\beta$, in terms of which we have
\begin{gather*}
\beta^*u = u_t + u_n \wedge dr,
\Mwith \df i_{\pa_r} u_t =0 \\
\Mso \Phi\lrpar u = \lrpar{ u_t\lrpar 0, (-1)^{k-1} \int_0^\infty u_n \;dr}
\end{gather*}
where we denote the interior product with $\pa_r$ by $\df i_{\pa_r}.$ Notice that
\begin{equation*}
\begin{split}
\Phi\lrpar{dv} 
&= \lrpar{d_X i_0^*v,
(-1)^k R_*\lrspar{ (d_{\bbS W}v_n +(-1)^k \pa_r v_t) \wedge dr} } \\
&= \lrpar{d_X i_0^*v, -\pi^*i_0^*v - d_{\bbS W} [(-1)^{k-1} R_*v]}
= D\Phi\lrpar v,
\end{split}
\end{equation*}
so $\Phi$ defines a map on cohomology which is easily seen to be an
isomorphism (for instance by using the commutative diagram
\begin{equation*}
\xymatrix{
	\ldots \ar[r] &
	H^{k-1}\lrpar{\bbS W} \ar[r] &
	\cH^k\lrpar{\bbS W,\pi} \ar[r] &
	H^k\lrpar X \ar[r]^{\pi^*} &
	H^k\lrpar{\bbS W} \ar[r] & \ldots \\
	\ldots \ar[r] &
	H^{k-1}\lrpar{\bbS W} \ar[r] \ar[u]^{\mathrm{id}} &
	H^k_c\lrpar W \ar[r] \ar[u]^\Phi&
	H^k\lrpar{\bar W} \ar[r]^{\pi^*} \ar[u]^{i^*_0} &
	H^k\lrpar{\bbS W} \ar[r] \ar[u]^{\mathrm{id}} & \ldots }
\end{equation*}
and the Five Lemma).
\end{proof}

\subsection{Bundle with line subbundle}\label{rel-bound-zero} A variant of
the setting of Lemma~\ref{rccbif.16} arises in the index formula for
perturbations of the identity in the zero algebra. There a real vector
bundle, $W\longrightarrow Y,$ has a trivial line subbundle $L\subset W.$
Setting $U=W/L$
\begin{equation}
\cH^*(\bbS U,\pi)=\Hc^{*}(U)=\Hc^{*+1}(W)
\label{Z-i.87}\end{equation}
since $W\simeq U\oplus L$ with $L$ by assumption trivial.
For our purposes there is another more useful complex giving the same cohomology.

Consider the radial compactification of $L$, $\bar L$, obtained by attaching to each fiber of $L$ the points at $\pm\infty$, $L^+$, $L^-$, and the forms 
\begin{equation*}
	\CI_{\pm}(\bar L;\Lambda ^*)
	= \lrbrac{ \alpha \in \CI(\bar L;\Lambda ^*) : 
	i^*_{L^+} \alpha = i^*_{L^-}\alpha, \;
	i^*_{L^+} d\alpha = i^*_{L^-}d\alpha }.
\end{equation*}
We use the deRham complexes 
$\cC_1^k=\CI_{\pm}(\bar L;\Lambda ^k)$ and 
$\cC_2^k=\CI(\bbS U;\Lambda ^k)$
with the chain map
\begin{equation*}
-\pi^*\nu^L_*:\cC_1^k\longrightarrow\cC_2^{k-1},
\end{equation*}
(where $\nu^L_*:\CI_{\pm}(\bar L;\Lambda ^*)\longrightarrow \CI(Y;\Lambda ^{*-1})$ is
push-forward under $\pi_{\bbS W}$ restricted to $L$) to form the complex
$\cC^k = \cC_1^k \oplus \cC_2^{k-2}$  (note $f=2$) with differential 
$\begin{pmatrix}d & 0 \\ -\pi^*\nu^L_* & -d \end{pmatrix}$. The cohomology of this complex will be denoted 
$\cH^*\lrpar{\bbS U, \bar L}$.
Notice that for $\alpha \in \CI_{\pm}(\bar L;\Lambda ^*)$ we have $d\nu^L_*\alpha = \nu^L_*d\alpha$ (cf. Lemma \ref{PrepZero} below).

\begin{lemma}\label{rccbif.18} There are natural isomorphisms in cohomology
\begin{equation}
\cH^k(\cC,-\pi^*\nu^L_*)=\Hc^k(W)= \cH^{k-1}(\bbS U,\pi).
\label{Z-i.85}\end{equation}
\end{lemma}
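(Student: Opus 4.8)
The plan is to prove the two isomorphisms in \eqref{Z-i.85} separately, establishing $\cH^k(\cC,-\pi^*\nu^L_*)\cong\Hc^k(W)$ and $\Hc^k(W)\cong\cH^{k-1}(\bbS U,\pi)$; the second is immediate from Lemma~\ref{rccbif.16} applied to $U=W/L$ together with the shift \eqref{Z-i.87}, so the work is entirely in the first isomorphism. For that, I would follow the template of the proof of Lemma~\ref{rccbif.16}: build an explicit comparison map between the relative complex $(\cC,-\pi^*\nu^L_*)$ and a complex computing $\Hc^*(W)$, then show it is a quasi-isomorphism via a Five Lemma argument using the long exact sequences \eqref{Z-i.61}. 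Since $L$ is trivial, $\bar L\cong \bbS(L\oplus\underline{\bbR})$ is (fibrewise) an interval compactification and $\CI_{\pm}(\bar L;\Lambda^*)$ is designed exactly so that a closed form on it, restricted compatibly at the two ends $L^\pm$, glues along the fibre circle — this is what lets $\bar L$ play the role that $X$ played in Lemma~\ref{rccbif.16}.

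Concretely: first I would identify the hypercohomology $\cH^k(\cC_1)$ of $\cC_1^k=\CI_{\pm}(\bar L;\Lambda^k)$. Because the inclusion of either end $L^\pm$ is a homotopy equivalence of $\bar L$ onto $Y$ and the two restrictions agree, one gets $\cH^k(\CI_{\pm}(\bar L;\Lambda^*))\cong H^k(Y)$, with $\nu^L_*$ inducing the zero map in cohomology for dimensional reasons (integration over the interval fibre of a form pulled back, up to exact terms — this is the content of the parenthetical remark $d\nu^L_*\alpha=\nu^L_*d\alpha$, i.e. Lemma~\ref{PrepZero}). Next I would write down the long exact sequence \eqref{Z-i.61} for the complex $\cC^k=\cC_1^k\oplus\cC_2^{k-2}$ with $\cC_2^k=\CI(\bbS U;\Lambda^k)$ and connecting map $-\pi^*\nu^L_*$, and compare it termwise with the Mayer–Vietoris / Gysin-type long exact sequence for $\Hc^*(W)$. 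The comparison map itself: given a closed compactly-supported $k$-form $u$ on $W$, using a metric splitting $W\cong L\oplus U$ and polar coordinates in the $U$-directions as in \eqref{Z-i.3}, produce a pair $(\alpha,\beta)$ with $\alpha\in\CI_{\pm}(\bar L;\Lambda^k)$ the restriction of $u$ to the $L$-fibres (extended by its limits at $\pm\infty$, which exist and match because $u$ has compact support so vanishes near the ends) and $\beta\in\CI(\bbS U;\Lambda^{k-2})$ obtained by fibre-integrating $u$ over the radial $U$-variable, with the sign/degree bookkeeping exactly as in \eqref{MapPhi}. One then checks $\Phi(dv)=D\Phi(v)$ by the same splitting-of-$d$ computation as in Lemma~\ref{rccbif.16}.

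The main obstacle I expect is the bookkeeping around the two-step nature of the compactification: $W$ is being compactified radially only in the $U$ directions while $L$ is compactified at $\pm\infty$ into $\bar L$, and $\cC_1$ is the subcomplex $\CI_{\pm}(\bar L;\Lambda^*)$ rather than all of $\CI(\bar L;\Lambda^*)$ — so one must verify carefully that the comparison map lands in $\CI_{\pm}$ (the two end-restrictions of $\alpha$ and of $d\alpha$ genuinely agree, which uses compact support of $u$) and, conversely, that the $\pm$-compatibility is exactly what is needed for the Five Lemma diagram to have the right terms. Equivalently one can think of it as: $\CI_{\pm}(\bar L;\Lambda^*)$ computes $\Hc^*$ of the open cylinder $\inside{\bar L}= L$ glued to... — the cleanest check is simply that $\cH^*(\CI_\pm(\bar L;\Lambda^*))\cong H^*(Y)$ with $\nu^L_*=0$, after which the Five Lemma forces the result from the already-established case $\cH^{k-1}(\bbS U,\pi)\cong\Hc^{k-1}(U)\cong\Hc^k(W)$. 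Homotopy invariance of all these complexes (Lemma~\ref{HomotopyInv}) guarantees independence of the auxiliary metric and splitting.
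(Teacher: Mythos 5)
Your high-level plan — split off the easy isomorphism $\Hc^k(W)\cong\cH^{k-1}(\bbS U,\pi)$ via Lemma~\ref{rccbif.16} and \eqref{Z-i.87}, and then compare long exact sequences via a Five Lemma — is the right shape, and indeed it is essentially what the paper does. But your key intermediate computation is wrong, and the error is not cosmetic: it would break the Five Lemma comparison.

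You assert that $\cH^k\lrpar{\CI_{\pm}(\bar L;\Lambda^*)}\cong H^k(Y)$ with $\nu^L_*$ inducing the zero map on cohomology. Neither is true. Take $Y=\{pt\}$, so $\bar L=[-1,1]_s$. Then $\CI_\pm(\bar L;\Lambda^0)=\{f: f(1)=f(-1)\}$ and $\CI_\pm(\bar L;\Lambda^1)$ is all of $\CI([-1,1];\Lambda^1)$. The exact $1$-forms are exactly those $g\,ds$ with $\int_{-1}^1 g=0$ (a primitive $f$ exists with $f(1)=f(-1)$ iff the integral vanishes), so
\begin{equation*}
\cH^1\lrpar{\CI_\pm(\bar L;\Lambda^*)}\;\cong\;\bbR
\qquad\text{via }[g\,ds]\mapsto\int_{-1}^1 g\,ds,
\end{equation*}
not $H^1(pt)=0$; and $\nu^{\bar L}_*[ds]=2\ne 0$. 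More generally the short exact sequence of complexes
\begin{equation*}
0\to\CI_\pm(\bar L;\Lambda^*)\to\CI(\bar L;\Lambda^*)\xrightarrow{\;i^*_{L^+}-i^*_{L^-}\;}\CI(Y;\Lambda^*)\to 0
\end{equation*}
(the second condition in the definition of $\CI_\pm$ is automatic, since restriction to a boundary face commutes with $d$) has $i^*_{L^+}-i^*_{L^-}$ inducing zero on cohomology (the two inclusions $Y\hookrightarrow\bar L$ are homotopic), so the long exact sequence splits into short pieces $0\to H^{k-1}(Y)\to\cH^k(\CI_\pm)\to H^k(Y)\to 0$. Thus $\cH^k(\CI_\pm)\cong H^k(Y)\oplus H^{k-1}(Y)$: the first summand is $\pi_{\bar L}^*H^k(Y)$ and is killed by $\nu^{\bar L}_*$, the second summand is carried isomorphically by $\nu^{\bar L}_*$ onto $H^{k-1}(Y)$. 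Your ``dimensional reasons'' argument only applies to pulled-back classes; it misses the other summand entirely.

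This is not a harmless slip. If $\nu^L_*$ really were zero on cohomology, the connecting map $\cH^k(\cC_1)\to H^{k-1}(\bbS U)$ in the long exact sequence \eqref{Z-i.61} would vanish identically, the sequence would split, and one would get $\cH^k(\bbS U,\bar L)\cong\cH^k(\cC_1)\oplus H^{k-2}(\bbS U)$ — which is not $\Hc^k(W)$. The paper's proof goes precisely the other way: it uses $\nu^{\bar L}_*$ as the \emph{nontrivial} vertical map in a commutative ladder between the long exact sequences for $\cH^*(\bbS U,\bar L)$ and for $\cH^*(\bbS U,\pi)$, and invokes the Five Lemma from the fact that $\nu^{\bar L}_*$ is a degree $-1$ quasi-isomorphism on the outer columns. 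Finally, your reading of Lemma~\ref{PrepZero} is a misreading: $d\nu^L_*\alpha=\nu^L_*d\alpha$ on $\CI_\pm$ says only that $\nu^L_*$ is a \emph{chain map} (which is needed for the complex to make sense), not that it is zero on cohomology.

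The portion of your proposal concerning the explicit comparison map $\Phi$ (restrict to $\bar L$, radially push forward to $\bbS U$) is in the right spirit and matches the paper's $\Phi_{\bar L}$ from \eqref{MapPhiL}, but the paper does not in fact need it for this lemma; it establishes the two isomorphisms separately rather than mapping directly from compactly supported forms on $W$.
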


\begin{proof}
We have the following commutative diagram relating the long exact sequences for
$\cH^*\lrpar{\bbS U, \bar L}$ and $\cH^*\lrpar{\bbS U,\pi}$
described in \eqref{Z-i.61},
\begin{equation*}
	\xymatrix @C = 21pt { 
	\ldots \ar[r] & H^{k-2}\lrpar{\bbS U} \ar[r] \ar[d]^{\mathrm{id}} &
	\cH^k\lrpar{\bbS U, \bar L} \ar[r] \ar[d]^{\nu^{\bar L}_*}&
	H^k\lrpar{\bar L} \ar[r]^{-\pi^*\nu^L_*} \ar[d]^{\nu^{\bar L}_*} &
	H^{k-1}\lrpar{\bbS U} \ar[r] \ar[d]^{\mathrm{id}} & \ldots \\
	\ldots \ar[r] & H^{k-2}\lrpar{\bbS U} \ar[r] &
	\cH^{k-1}\lrpar{\bbS U, \pi} \ar[r] &
	H^{k-1}\lrpar{Y} \ar[r]^{-\pi^*} &
	H^{k-1}\lrpar{\bbS U} \ar[r] & \ldots }
\end{equation*}
Since the cohomologies of $\bar L$ and $Y$ are isomorphic via $\nu^{\bar
L}_*$ the Five Lemma shows that  
\begin{equation*}
	\cH^k\lrpar{\bbS U, \bar L}
	\xrightarrow[\cong]{\nu^{\bar L}_*}
	\cH^{k-1}\lrpar{\bbS U, \pi}
	\cong H^{k-1}_c\lrpar U
	\cong H^k_c\lrpar W
\end{equation*}
as required.
\end{proof}

For future reference we point out that from the proof of this lemma and that of Lemma~\ref{rccbif.16} the map
\begin{equation}
	\Phi_{\bar L}: \CI_c\lrpar{W;\Lambda^k}
	\to \CI_{\pm}\lrpar{\bar L;\Lambda^k} \oplus \CI\lrpar{\bbS U;\Lambda^{k-2}}
\label{MapPhiL}\end{equation}
defined 
by $\Phi_{\bar L}(\omega)=\lrpar{i^*_{\bar L}\omega, (-1)^{k-1} \nu^{\bar L}_*R^W_*\omega}$, where $i_L$ is the inclusion of $L$ into $W$ and $\nu^{\bar L}_*$ is the push-forward from $\bbS W$ to $\bbS U$, induces an isomorphism in cohomology between $H^k_c(W)$ and $\cH^{k-1}\lrpar{\bbS U, \bar L}$.

\subsection{Bundle over manifold with boundary}\label{rel-scat} A more
general case, which arises in the index 
formula for the scattering algebra, corresponds to a vector bundle $W$ over
a compact manifold with boundary $X.$ Let $\comp{W}$ be the radial
compactification of $W.$ Its boundary consists of two hypersurfaces, $\bbS
W,$ the part `at infinity' and $\comp{W}_{\pa X},$ the part over the
boundary. Set
\begin{multline}
\cC_1=\CI(X;\Lambda^*)\Mand\\
\cC_2=\left\{(\alpha,\beta);\alpha\in\CI(\bbS W;\Lambda ^*),\ \beta\in\CI(\comp{W}_{\pa
  X};\Lambda ^*)\Mand \iota _{\pa}^*\alpha=\iota _{\pa}^*\beta\right\},
\label{Z-i.66}\end{multline}
the pairs of forms on the two hypersurfaces with common restriction
to the corner $\bbS W_{\pa X}.$
This gives a complex as in \eqref{Z-i.62} with $\cC^k= \cC_1^k\oplus
\cC_2^{k-1}$ and differential
\begin{equation}
	D = \begin{pmatrix}
		d & 0 \\ \phi & -d  
	\end{pmatrix},
	\Mwhere
	\phi = \begin{pmatrix} -\pi^* \\ -\pi^*i^* \end{pmatrix}
\label{Z-i.68}\end{equation}

\begin{lemma}\label{rccbif.19} The cohomology of the complex \eqref{Z-i.66}
  with differential \eqref{Z-i.68} reduces to the cohomology with compact
  support in $W_{X\setminus\pa X}.$
\end{lemma}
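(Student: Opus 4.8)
The plan is to adapt the retraction argument of Lemma~\ref{rccbif.12} and Lemma~\ref{rccbif.16} simultaneously, since $\comp{W}$ is a manifold with corners whose two boundary hypersurfaces $\bbS W$ and $\comp{W}_{\pa X}$ play exactly the roles of ``infinity in the fibre'' and ``over the boundary of the base'' that those two lemmas treat separately. First I would use a metric on $W$ and a product neighbourhood $U=[0,1)_x\times\pa X$ of $\pa X$ in $X$ to set up simultaneous polar coordinates: on $W\setminus i_0(X)$ write $W\cong\bbS W\times[0,\infty]_r$ (compactifying the radial variable so that $r=\infty$ is $\bbS W$), and near the corner combine this with the boundary defining function $x$. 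A compactly supported form in $W_{X\setminus\pa X}$ then just means a form on $\comp W$ supported away from both $\bbS W$ and $\comp{W}_{\pa X}$; the claim is that the complex \eqref{Z-i.66}–\eqref{Z-i.68} retracts onto the subcomplex of such forms, whose cohomology is $\Hc^*(W_{X\setminus\pa X})$ by the usual deRham argument.

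The core step is to produce, for a $D$-closed element $(u,(\alpha,\beta))$, a chain homotopy $T$ with $DT(u,(\alpha,\beta))-(u,(\alpha,\beta))=(\omega,0)$ and $\omega$ supported away from both faces. I would build $T$ in two stages, mirroring the two retractions. Stage one removes the fibrewise ``infinity'' face using the operator $R_*$ (integration along $[0,\infty]_r$) as in the proof of Lemma~\ref{rccbif.16}: $\alpha$ and the corner-compatible $\beta$ together determine a primitive that kills $u$ modulo a form supported in a compact fibre-neighbourhood of the zero section, together with a correction on $\comp{W}_{\pa X}$ and a matching term on the corner. Stage two removes the $\comp{W}_{\pa X}$ face exactly as in Lemma~\ref{rccbif.12}, using a cutoff $\rho(x)$ and the integral $\int_0^x(\cdot)\,ds$ on the normal variable of the base, now applied fibrewise over $W$. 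The compatibility condition $\iota_\pa^*\alpha=\iota_\pa^*\beta$ in \eqref{Z-i.66} is precisely what makes the two stages glue: the corner term produced by stage one is in the image of the stage-two homotopy, so the composite $T$ lands in $\cC_*$ and not merely in some larger space.

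An alternative, and probably cleaner, route is the Five Lemma argument used for Lemmas~\ref{rccbif.16} and~\ref{rccbif.18}: identify the long exact sequence \eqref{Z-i.61} for this complex with the long exact sequence of the pair relating $H^*_c(W_{X\setminus\pa X})$, $H^*(\comp W)$, and $H^*$ of the boundary $\bbS W\cup\comp W_{\pa X}$. Concretely, $\cC_2$ in \eqref{Z-i.66} is a Mayer–Vietoris-type model for forms on $\pa\comp W=\bbS W\cup_{\text{corner}}\comp W_{\pa X}$, so its hypercohomology is $H^*(\pa\comp W)$; then $\cH^*(\cC_*,\phi)=\cH^*(\pa\comp W,\iota)=H^*(\comp W,\pa\comp W)=\Hc^*(\inside{\comp W})$ by Lemma~\ref{rccbif.12} applied to the manifold with corners $\comp W$, and $\inside{\comp W}=W_{X\setminus\pa X}$. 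One must check that the pullback maps $-\pi^*$ and $-\pi^*i^*$ assembling $\phi$ induce, on cohomology, the restriction $H^*(\comp W)\to H^*(\pa\comp W)$; this is where the contractibility of the fibres of $\bbS W\to X$ (so that $\pi^*$ is an isomorphism there) and of $\comp W_{\pa X}\to\pa X$ enters, just as in Lemma~\ref{rccbif.16}.

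The main obstacle I anticipate is bookkeeping at the corner $\bbS W_{\pa X}$: one has to verify that the homotopy operator preserves the compatibility condition in \eqref{Z-i.66} and that the spectral sequence / five-lemma identification is compatible with the corner restriction maps, i.e. that the model $\cC_2$ genuinely computes $H^*$ of the corner-gluing $\bbS W\cup\comp W_{\pa X}$ rather than of the disjoint union. Once that identification is in hand the rest is a routine diagram chase against the long exact sequence of the pair $(\comp W,\pa\comp W)$.
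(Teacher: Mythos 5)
Your first proposal is in the right spirit but over-engineers the retraction and reverses the order compared to the paper. The paper's actual proof does not retract the $\bbS W$ face at all: it begins by using the contractibility of the fibres of $\comp W_{\pa X}\to\pa X$ (\emph{not} $\bbS W\to X$) to write $\beta=\pi^*\beta'+d\beta''$, then builds a single chain homotopy $T$ involving $\rho(x)\int_0^x(\cdot)\,ds$, $\beta'$, $\beta''$ and the compatibility at the corner, which retracts to the subcomplex of forms supported in $X^\circ$. At that point it simply invokes Lemma~\ref{rccbif.16}, whose complex is exactly what remains, rather than constructing any $R_*$-based stage for $\bbS W$. Your description of ``stage one'' is also a bit off: in \eqref{Z-i.66} the first component $u$ lives on $X$, not on $W$, so there is no ``killing $u$ modulo a form supported near the zero section'' via $R_*$; the fibre-infinity face is handled implicitly by the reduction to Lemma~\ref{rccbif.16}.

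Your alternative Five Lemma route is genuinely different from the paper (more in the style of the paper's proofs of Lemmas~\ref{rccbif.16} and~\ref{rccbif.18}) and is a reasonable strategy, but it contains a concrete error: you invoke ``the contractibility of the fibres of $\bbS W\to X$ (so that $\pi^*$ is an isomorphism there).'' The fibres of $\bbS W\to X$ are spheres, not contractible, and $\pi^*\colon H^*(X)\to H^*(\bbS W)$ is far from an isomorphism. What you actually need is that the fibres of $\comp W\to X$ are disks, so $\pi^*\colon H^*(X)\to H^*(\comp W)$ is an isomorphism; composing with that iso identifies $\phi$ on hypercohomology with the restriction $H^*(\comp W)\to H^*(\pa\comp W)$, which is the map appearing in the long exact sequence of the pair $(\comp W,\pa\comp W)$. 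The other ingredients you flag — that $\cC_2$ in \eqref{Z-i.66} is a Mayer--Vietoris model for $H^*(\pa\comp W)$ by virtue of the corner-compatibility condition, and that one must then chase the Five Lemma — are correct and would carry the argument once the contractibility claim is repaired.
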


\begin{proof}
The cohomology of $\bar W_{\pa X}$ is canonically isomorphic to the cohomology of $\pa X$ under the pull-back map. Thus a closed form on $\bar W_{\pa X}$ is the sum of a form on $\pa X$ pulled-back to $\bar W_{\pa X}$ and an exact form on $\bar W_{\pa X}$. We point out that the same is true for a form $\beta$ on $\bar W_{\pa X}$ if $d\beta$ is a form pulled-back from $\pa X$ (this follows from the Hodge decomposition of forms and the previous statement or from the proof of \cite[Cor. 4.1.2.2]{Bott-Tu}).

Thus from $D\lrpar{a,\lrpar{\alpha,\beta}}=0$ it follows that
$\beta=\pi^*\beta' + d\beta''$ and hence
\begin{equation}
	da=0, \quad -\pi^*a = d\alpha, \quad -i^*a= d\beta'.
\label{Z-i.90}\end{equation}
Choose a product neighborhood of the boundary $\cU \cong
\lrspar{0,1}_x\times \pa X$ and a corresponding decomposition of $\bbS
W\rest{\cU} \cong  \lrspar{0,1}_x\times V$ with $V=\bbS W\rest{\pa X}.$ In
this neighborhood
\begin{align}
a = a_t\lrpar x +  a_n\lrpar x \wedge dx, \quad
a_t \in \CI(\pa X; \Lambda^k), \; a_n \in \CI(\pa X;\Lambda^{k-1}),
\label{ProdDecomp1}
\\ 
\alpha = \alpha_t\lrpar x +  \alpha_n\lrpar x \wedge dx, \quad
\alpha_t \in \CI(V; \Lambda^{k-1}), \; \alpha_n \in \CI(V;\Lambda^{k-2}),
\label{ProdDecomp2}\end{align}
and \eqref{Z-i.90} becomes
\begin{gather*}
	d_{\pa X}a_t=0, \quad 
	(-1)^k \pa_x a_t + d_{\pa X}a_n = 0, \quad
	d_V\alpha_t = -\pi^*a_t, \\
	(-1)^{k}\pa_x \alpha_t + d_V\alpha_n = -\pi^*a_n, \quad
	-a_t\lrpar 0 = d\beta'.
\end{gather*}

Choose a smooth function $\rho\in\CI\lrpar X$ that is identically equal to
one if $x<1/2$ and identically equal to zero if $x>3/4$ and define $T \in
\cC^{k-1}$ by
\begin{equation*}
T= \rho\lrpar x
\lrpar{ (-1)^{k+1}\int_0^x a_n\lrpar s \; ds - \beta', 
\lrpar{(-1)^{k-1}\int_0^x \alpha_n\lrpar s \; ds + \iota_{\pa}^*\beta'',-\beta''}}.
\end{equation*}
Then, for some $\omega\in\cC^k,$
\begin{equation*}
	DT 
	= \rho\lrpar x\lrpar{ a,\lrpar{\alpha,\beta}} + \rho'\lrpar x \omega
\end{equation*}
so that $\lrpar{a,\lrpar{\alpha,\beta}}- DT=\lrpar{\wt a, \lrpar{\wt
    \alpha,0}}$ with $\wt a$ and $\wt \alpha$ forms supported in
$X^\circ.$ Thus the complex retracts to the subcomplex of forms supported
in $X^\circ,$ and from Lemma \ref{rccbif.16} this complex computes the
cohomology with compact support of $W$ over $X^\circ.$
\end{proof}

\subsection{Bundle with line subbundle over the boundary}\label{Zero-ind-coh} 

The representations of relative cohomology that will be used for the index
of zero operators and for operators in Boutet de Monvel's transmission
calculus are closely related. Consider a compact manifold with boundary,
$X,$ a real vector bundle $W$ over $X$ which over the boundary has a
trivial line subbundle $L,$ and denote the quotient bundle over the
boundary by $U=W_{\pa X}/L.$ The compactly supported cohomology of $W$ will
be represented as in \S\ref{SphereBundle}, that of its restriction to the
boundary either as in \S\ref{rel-bound-zero} for the zero calculus or
\S\ref{SphereBundle} for the transmission calculus. An appropriate version
of the inclusion map then gives the compactly supported cohomology of the
interior much as in \S\ref{rel-mwb}. 

Notice that $\bbS W_{\pa X}\setminus\lrbrac{L^+,L^-}$ fibers over $\bbS U$
and can be identified with $\bbS U \times \bbR$ (since $L$ is trivial),
thus there is a push-forward map $\nu^{\bbS W}_*: \CI\lrpar{\bbS
  W,\Lambda^k} \to \CI\lrpar{\bbS U,\Lambda^{k-1}}$ which however does not
commute with $d.$ 

\begin{lemma}\label{PrepZero} If $\alpha \in \CI\lrpar{\bbS W,\Lambda^k}$
then
\begin{equation}\label{PushFwdLemmaZ}
\nu^{\bbS W}_*d_{\bbS W}\alpha = d_{\bbS U}\nu^{\bbS W}_*\alpha 
+ (-1)^k ( \pi^*i_{L^+}\alpha-\pi^*i_{L^-}\alpha ).
\end{equation}
Thus if $i_{L^+}\alpha=i_{L^-}\alpha$ then $\nu_L^*d\alpha = d\nu_L^*\alpha.$
\end{lemma}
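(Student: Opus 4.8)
The plan is to reduce the identity \eqref{PushFwdLemmaZ} to a fibrewise statement on the trivial bundle $\bbS W_{\pa X}\setminus\{L^+,L^-\}\cong \bbS U\times\bbR_s$, where $s$ is the fibre coordinate along the (trivial) line $L$, and then to compute directly using the decomposition of forms into tangential and normal parts. First I would split the differential on $\bbS W_{\pa X}$ as $d_{\bbS W}=d_{\bbS U}+ds\wedge\partial_s$ with respect to this product structure and write $\alpha=\alpha_t+ds\wedge\alpha_n$ with $\df i_{\partial_s}\alpha_t=0$, so that $\nu^{\bbS W}_*\alpha=\int_{-\infty}^{\infty}\alpha_n\,ds$ (a form on $\bbS U$ of degree $k-1$). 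Note that, because we are really working on a compactification, the integral is over a finite interval with $L^\pm$ the two endpoints, so the boundary terms in what follows are exactly the restrictions $i_{L^\pm}$. The two sides of \eqref{PushFwdLemmaZ} then just need to be matched term by term.

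The key computation is: $d_{\bbS W}\alpha = d_{\bbS U}\alpha_t + ds\wedge(\partial_s\alpha_t - d_{\bbS U}\alpha_n)$ up to signs dictated by the degree of $\alpha$ — I would be careful that $ds\wedge d_{\bbS U}\alpha_n = (-1)^{?}d_{\bbS U}(ds\wedge\alpha_n)$, and I expect the sign $(-1)^k$ in the statement to come precisely from commuting $ds$ past a form of appropriate parity, or equivalently from the orientation/sign convention for fibre integration $\nu^{\bbS W}_*$. Applying $\nu^{\bbS W}_*$ to $d_{\bbS W}\alpha$ picks out the coefficient of $ds$, giving $\int(\partial_s\alpha_t - d_{\bbS U}\alpha_n)\,ds$. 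The first integral is $\int\partial_s\alpha_t\,ds = \alpha_t|_{s=+\infty}-\alpha_t|_{s=-\infty}$ by the fundamental theorem of calculus (on the compactified fibre), and since $L$ is trivial these endpoint restrictions are exactly $i_{L^+}\alpha - i_{L^-}\alpha$ pulled back from $\bbS U$; the second integral is $-d_{\bbS U}\int\alpha_n\,ds = -d_{\bbS U}\nu^{\bbS W}_*\alpha$ (here $d_{\bbS U}$ commutes with the $s$-integral since it doesn't see $s$). Collecting gives \eqref{PushFwdLemmaZ}; the second assertion is then immediate, since $i_{L^+}\alpha=i_{L^-}\alpha$ kills the correction term.

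The main obstacle — really the only delicate point — is bookkeeping the signs: tracking where the $(-1)^k$ comes from requires pinning down the sign convention for the push-forward $\nu^{\bbS W}_*$ (integration along the oriented fibre $\bbR_s$), the order in which $ds$ is written relative to $\alpha_n$, and the Koszul signs when moving $ds$ through $d_{\bbS U}$. I would fix these once and for all at the start — e.g. declaring $\nu^{\bbS W}_*(\beta_t + ds\wedge\beta_n) = \int \beta_n\,ds$ with $ds$ written on the left — and then the identity falls out with the stated sign. One should also note that although $\alpha$ is only assumed smooth on $\bbS W$ (not compactly supported in the fibre), the fibre $\bbR_s$ has been compactified to a closed interval with $L^\pm$ as endpoints, so all integrals converge and the boundary terms are genuine restrictions of the smooth form $\alpha$ to the submanifolds $L^\pm\subset\bbS W$; no decay hypothesis is needed.
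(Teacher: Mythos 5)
Your approach is essentially the same as the paper's: both pass to the product $\bbS U\times\bar L$ (the paper does this explicitly via the blow-up map $\beta:\bbS U\times\bar L\to \bbS W_{\pa X}$), decompose the pulled-back form into tangential and $ds$-parts, and apply the fundamental theorem of calculus in $s$ to produce the $i^*_{L^\pm}$ boundary terms. One small point: your proposed convention $\nu^{\bbS W}_*(\beta_t+ds\wedge\beta_n)=\int\beta_n\,ds$ with $ds$ on the \emph{left} does not give the stated signs --- it yields $\nu_*d\alpha=-d\nu_*\alpha+(i^*_{L^+}\alpha-i^*_{L^-}\alpha)$; the paper instead writes $\beta^*a=a'+a''\wedge ds$ with $ds$ on the \emph{right} and $\nu^{\bar L}_*\beta^*a=\int a''\,ds$, which produces exactly the factor $(-1)^k$ in \eqref{PushFwdLemmaZ} from commuting $ds$ past the degree-$k$ form $\pa_s a'$. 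So the bookkeeping you flag as delicate is indeed the only issue, and the declaration you propose would need to be flipped.
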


\begin{proof}
Introduce polar coordinates around $L^{\pm}$ in $\bbS W_{\pa X}$ (i.e.,
blow them up) to get a map
\begin{equation}
\bbS U \times \bar L \xrightarrow{\beta} \bbS W_{\pa X}.
\label{rccbif.21}\end{equation}
The pre-image of $L^{\pm}$ will still be denoted $L^{\pm}.$ The
push-forward is given by $a \mapsto \nu^{\bar L}_*\beta^*a$ and there are
no integrability issues since $\bbS W_{\pa X}$ is compact.

In local coordinates, for $a$ a form of degree $k,$
\begin{multline*}
\beta^*a = a'\lrpar s + a''\lrpar s \wedge ds,
\Mwith a', a'' \in \CI\lrpar{\bbS U, \Lambda^*} \\
\implies	d\beta^*a = d_{\bbS U}a'\lrpar s 
+ \lrpar{ d_{\bbS U} a''\lrpar s + (-1)^k \pa_s a'\lrpar s } \wedge ds.
\end{multline*}
Hence $\nu^{\bar L}_*\beta^*a = \int_{\bar \bbR} a''\lrpar s\; ds$ and
\begin{equation*}
\nu^{\bar L}_*\beta^*\lrpar{d_{\bbS W}a} 
= d_{\bbS U} \nu^L_*\beta^*a + (-1)^k \int_{\bar \bbR} \pa_s a'\lrpar s \; ds
\end{equation*}
giving \eqref{PushFwdLemmaZ}.
\end{proof}
 
Introducing the complexes and chain maps
\begin{equation*}
\begin{gathered}
\cZ^k_1=\CI(X;\Lambda^k),\ \cZ^k_3=\CI(\bbS U;\Lambda ^k)\\
\cZ^k_2=\left\{(\alpha,\gamma)\in\CI(\bbS W;\Lambda ^k)
\oplus\CI(\comp{L};\Lambda^k);
i^*_{\pm}\alpha=i^*_{\pm}\gamma\right\}\\
\phi_1=\begin{pmatrix} -\pi^*_{\bbS W} \\
-\pi^*_{\bar L}\iota_{\pa}^*\end{pmatrix},\
\phi_2=
\begin{pmatrix} -\nu^{\bbS W}_* , \pi^*_{\bbS U}\nu^{\bar L}_*\end{pmatrix},
\end{gathered}
\end{equation*}
we define the total chain space $\cZ^*$ by
\begin{equation}
\cZ^k = \cZ^k_1 \oplus \cZ^{k-1}_2 \oplus \cZ^{k-3}_3,\quad
D_{\cZ} = \begin{pmatrix} d & & \\ \phi_1 & -d & \\ & \phi_2 & d \end{pmatrix}
\label{ZeroComplex}\end{equation}
where $i_{\pm}$ are the inclusion (or attaching) maps for the two boundary
manifolds (each canonically diffeomorphic to $\pa X)$ $L_{\pm}$ of
$\comp{L},$ either to $\bbS W$ or $\comp{L},$ and $\pi_{\bbS W}:\bbS
W\longrightarrow X,$ $\pi_{\bbS U}:\bbS U\longrightarrow \pa X$, and
$\pi_{\bar L}:\comp{L}\longrightarrow \pa X$ denote the various bundle
projections.

As in Lemma \ref{PrepZero}, neither the push-forward for $\alpha$ nor
$\gamma$ commute with $d,$ however the consistency condition
$i^*_{\pm}\alpha=i^*_{\pm}\gamma$ yields $d\phi_2 = \phi_2 d;$ together
with $d\phi_1=\phi_1d$ this ensures that $D^2_{\cZ}=0.$

With the same notation set
\begin{equation*}
\begin{gathered}
\CI_{\pm}\lrpar{\bbS W;\Lambda^*} 
= \lrbrac{\alpha \in \CI\lrpar{\bbS W;\Lambda^*} : 
i^*_{L^+}\alpha = i^*_{L^-}\alpha, \; i^*_{L^+} d\alpha = i^*_{L^-} d\alpha } \\
\cT^k_1 =\CI(X;\Lambda^k), \quad 
\cT^k_2 = \CI_{\pm}(\bbS W;\Lambda^k) \oplus \CI(\pa X; \Lambda^{k-1}), \quad
\cT^k_3 = \CI(\bbS U;\Lambda^k) \\
\Phi_1 = \begin{pmatrix} -\pi^*_{\bbS W} \\ i^*_{\pa X} \end{pmatrix},
\Phi_2 = \begin{pmatrix} \nu^{\bbS W}_*, \pi^*_{\bbS U} \end{pmatrix}
\end{gathered}
\end{equation*}
and define the total chain space $\cT^*$ by
\begin{equation}
\cT^k = \cT^k_1 \oplus \cT^{k-1}_2 \oplus \cT^{k-3}_2 ,\quad
D_{\cT} = \begin{pmatrix} d & & \\ \Phi_1 & -d & \\ & \Phi_2 & d \end{pmatrix}.
\label{TransComplex}\end{equation}
Note that $D_{\cT}^2=0$ because $i^*_{L^+}\alpha = i^*_{L^-}\alpha$
guarantees that $d\nu^L_* = \nu^L_*d.$ 

The point of these rather involved constructions of the cohomology with
compact supports is that the Chern character derived from the symbolic
data (symbol and normal operator) of a fully elliptic zero operator has a natural
representative in the chain space $\cZ_*,$ while the Chern character
constructed from the symbols (interior and boundary) of a fully elliptic
operator in the transmission calculus has a natural representative in the chain space
$\cT_*.$

\begin{lemma} The cohomology of the complexes \eqref{ZeroComplex} and
  \eqref{TransComplex} are isomorphic to the compactly supported cohomology
  of $W$ restricted to the interior of $X,$
\begin{equation}
\cH^k(\cT_*;D_T) = \cH^k(\cZ_*;D_Z) = \Hc^k(W\big|_{X\setminus\pa X}).
\label{Z-i.88}\end{equation}
\end{lemma}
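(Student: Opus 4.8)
The plan is to adapt the pattern of Lemmas~\ref{rccbif.16}, \ref{rccbif.18} and \ref{rccbif.19}: for each of the two complexes, exhibit an explicit homotopy retracting it onto the subcomplex of forms supported over $X\setminus\pa X$, and then invoke Lemma~\ref{rccbif.16}. I describe the argument for the zero complex $\cZ_*$; the transmission complex $\cT_*$ is handled in the same way, with $\CI_{\pm}(\bbS W;\Lambda^*)$ together with a separate factor $\CI(\pa X;\Lambda^*)$ in place of the glued pair of forms on $\bbS W$ and $\comp L$, and with $\nu^{\bbS W}_*$ playing the role of $\nu^{\bar L}_*$; the commutation identities needed are exactly those of Lemma~\ref{PrepZero}.

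First I would fix a product neighbourhood $\cU\cong[0,1)_x\times\pa X$ of the boundary, with the induced product decomposition of $\bbS W$ over $\cU$, and split the $X$- and $\bbS W$-components $a,\alpha$ of a $D_\cZ$-closed element $(a,(\alpha,\gamma),\delta)$ into tangential and normal parts in $x$ as in \eqref{ProdDecomp1}--\eqref{ProdDecomp2} (the $\comp L$- and $\bbS U$-factors sit only over $\pa X$ and are handled as the term $\beta'$ was in the proof of Lemma~\ref{rccbif.19}). Writing out $D_\cZ(a,(\alpha,\gamma),\delta)=0$ and using that, over $\pa X$, the cohomology of $\comp L$ relative to its ends $L^{\pm}$ is governed by $H^*(\pa X)$ and $H^*(\bbS U)$ through $\nu^{\bar L}_*$ --- which is Lemma~\ref{rccbif.18} --- one writes $\gamma$ and $\delta$ as pull-backs plus exact terms and then assembles a homotopy operator $T\in\cZ^{k-1}$ by patching three ingredients already constructed in the paper: the collar integrations in $x$ on the $X$- and $\bbS W$-factors (Lemmas~\ref{rccbif.19} and \ref{rccbif.16}), the polar-coordinate homotopy along the fibres of $\comp L$ (Lemma~\ref{rccbif.18}), and the fibre integration $\bbS W_{\pa X}\to\bbS U$. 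One then checks that $D_\cZ T=\rho(x)(a,(\alpha,\gamma),\delta)+\rho'(x)\,\omega$ for a cutoff $\rho$ and some $\omega\in\cZ^k$, and that the $\comp L$- and $\bbS U$-components of $(a,(\alpha,\gamma),\delta)-D_\cZ T$ vanish. Thus every class has a representative $(\tilde a,(\tilde\alpha,0),0)$ with $\tilde a,\tilde\alpha$ supported over $X\setminus\pa X$; on that subcomplex the constraints $i^*_{\pm}\tilde\alpha=0$ and the maps into $\cZ_3$ are vacuous and $D_\cZ$ restricts to $\begin{pmatrix}d&0\\-\pi^*_{\bbS W}&-d\end{pmatrix}$ on $\CIc(X\setminus\pa X;\Lambda^*)\oplus\CIc(\bbS W|_{X\setminus\pa X};\Lambda^{*-1})$, which as in the proof of Lemma~\ref{rccbif.19} (via Lemma~\ref{rccbif.16}) computes $\Hc^*(W|_{X\setminus\pa X})$. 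The same two-step argument gives the statement for $\cT_*$.

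A less computational route is to use the $N=3$ rolled-up structure directly: the exact sequences \eqref{Z-i.75} express $\cH^*(\cZ_*)$ (resp.\ $\cH^*(\cT_*)$) through $H^*(X)$ and the hypercohomology of the bottom two rows, the latter is identified --- applying Lemmas~\ref{rccbif.16}--\ref{rccbif.18} fibrewise over $\pa X$ --- with the cohomology of the middle space of \eqref{Z-i.66}, and the Five Lemma, exactly as in the proof of Lemma~\ref{rccbif.18}, then reduces everything to Lemma~\ref{rccbif.19}. In either approach I expect the main obstacle to be the corner $\bbS W_{\pa X}$: since $\nu^{\bbS W}_*$ and $\nu^{\bar L}_*$ fail to commute with $d$ by the term $\pi^*(i_{L^+}-i_{L^-})$ of Lemma~\ref{PrepZero}, the pieces of $T$ must be composed in the right order and the matching conditions $i^*_{\pm}\alpha=i^*_{\pm}\gamma$ (respectively the $\CI_{\pm}$ conditions in $\cT_*$) must be invoked precisely where such a term would otherwise survive; checking that $T$ preserves these conditions and that $\rho'(x)\,\omega$ is a genuine coboundary is the crux.
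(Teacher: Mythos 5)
Your proposal and the paper's proof both end up at the same place, but by genuinely different routes, and the paper in fact uses \emph{neither} of your two routes verbatim. Your primary plan — an explicit retraction onto forms supported over $X^\circ$, assembled from collar integration in $x$, the polar homotopy along $\comp L$, and fibre integration $\bbS W_{\pa X}\to\bbS U$ — is the natural extension of what the paper does in Lemma~\ref{rccbif.19}, but the paper does \emph{not} attempt this for $\cZ_*$ or $\cT_*$. Instead it avoids the corner bookkeeping you correctly flag as the crux by choosing a `diagonal' short exact sequence of complexes
$0\to\cC_*(\bbS U,\comp L)\to\cZ_*\to\cC_*(\bbS W,\pi)\to 0$
(sub-complex carried by the $\comp L$- and $\bbS U$-factors; quotient carried by the $X$- and $\bbS W$-factors), maps the resulting long exact sequence to the topological long exact sequence of the pair $(W,W|_{\pa X})$ via $\Phi$, $\Phi_{\bar L}$ and a restriction $\wt\Phi$, verifies commutativity of the one non-obvious square by constructing the form $\gamma(u)$ explicitly, and then applies the Five Lemma. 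This is not the top-row/bottom-two-rows split \eqref{Z-i.75} of your Route~2: your decomposition produces the term $H^*(X)$ and the hypercohomology $\tcH_2$ of the bottom two rows, which would still need to be identified with a known group (your claimed identification with the cohomology of the middle space of \eqref{Z-i.66} needs a separate argument, since $\tcH_2$ carries forms on $\bbS W$, $\comp L$, $\bbS U$ rather than on $\bbS W$, $\comp W_{\pa X}$). The paper's choice has the advantage that both the sub and the quotient complexes are exactly the ones already proved in Lemmas~\ref{rccbif.16} and~\ref{rccbif.18} to compute $H^*_c(W)$ and $H^*_c(W|_{\pa X})$, so the comparison with the topological long exact sequence is immediate; your Route~1 would trade this soft argument for an explicit chain homotopy, which is more work but would give a concrete contraction formula, useful if one later wants an explicit integral representative rather than just an abstract isomorphism. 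For $\cT_*$ the paper again does something different from both of your routes: it writes down a four-factor complex that transparently computes $H^*_c(W,W|_{\pa X})$ (by combining \S\ref{rel-mwb} and \S\ref{SphereBundle}) and observes that $\cT_*$ arises from it by applying the fibre push-forward along $\comp L$, which is an isomorphism by Lemma~\ref{rccbif.18}; this is shorter than running the retraction again.
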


\begin{proof}
The map
\begin{equation*}
	\CI_{\pm}\lrpar{\bar L;\Lambda^k} 
	\oplus
	\CI\lrpar{\bbS U; \Lambda^{k-2}}
\ni	\lrpar{\gamma, \beta}\longmapsto 
\lrpar{0,0,\gamma- \pi^*_{\bar L} i^*_{L^+}\gamma, \beta}\in
\cC_k\lrpar{\cZ_*}
\end{equation*}
fits into the short exact sequence of complexes
\begin{equation*}
	0 \to 
	\cC_*\lrpar{\bbS U, \bar L} \xrightarrow{}
	\cZ_* \xrightarrow{} 
	\cC_*\lrpar{\bbS W, \pi}
	\to 0.
\end{equation*}
This in turn induces the long exact sequence in cohomology in the top row of
\begin{equation*}
\xymatrix @C=16pt
{ \ldots \to \cH^k\lrpar{\bbS W, \pi} \ar[r]^-J & 
	\cH^k\lrpar{\bbS U,\bar L} \ar[r] &
	\cH^{k+1}\lrpar{\cZ_*;D_Z} \ar[r] & 
	\cH^{k+1}\lrpar{\bbS W,\pi} \to \ldots \\
	\ldots \to H^k_c\lrpar{W} \ar[r]^-{i_*} \ar[u]^{\Phi} &
	H^k_c\lrpar{W\rest{\pa X}} \ar[r]^-\delta \ar[u]^{\Phi_{\bar L}} &
	H^{k+1}_c\lrpar{W, W\rest{\pa X}} \ar[r] \ar[u]^{\wt \Phi} &
	H^{k+1}_c\lrpar{W} \ar[u]^{\Phi} \to \ldots}
\end{equation*}
where the connecting map $J$ is induced by
\begin{equation*}
	\xymatrix @R=1pt
	{\CI(X;\Lambda^k) \oplus \CI(\bbS W;\Lambda ^{k-1}) \ar[r] &
	\CI(\bar L;\Lambda^k) \oplus \CI(\bbS U;\Lambda ^{k-2}) \\
	(a, \alpha) \ar@{|->}[r] &
	( \pi^*_{\bar L} i^*_\pa a,\nu^L_*i^*_\pa \alpha ) }
\end{equation*}
$\Phi$ and $\Phi_{\bar L}$ are the isomorphisms defined in \eqref{MapPhi}
and \eqref{MapPhiL} respectively, and $\wt \Phi$ is the restriction of
$\Phi$ to the subcomplex of forms that vanish at $W\rest{\pa X}.$ 

The left and right squares above clearly commute, so we only 
check the commutativity of the middle square. Let $u$ be a $k$-form on
$W\rest{\pa X}$ with compact support, the map $\delta$ is induced by taking
any extension of $u$ into $W,$ say $e\lrpar u,$ and taking its exterior
derivative. It is convenient to find $\gamma\lrpar u$ such that
$\lrpar{\Phi e\lrpar u, \gamma\lrpar u, 0}$ is in $\cZ_*.$ To this end
choose a trivialization of $\bar L,$ denote by $t$ the fibre variable
along $\bar L,$ and note that
\begin{equation*}
\gamma(u)(t) = (-1)^{k-1} \int_0^t i^*_{\bar L} u
\end{equation*}
is as required and satisfies 
$d\gamma(u)=i^*_{\bar L}u-\pi^*_{\bar L}i^*_0 u$ (since $u$ is closed), 
and $\nu^{\bar L}_*\gamma(u) =0$ (since $\df i_{\pa_t} \gamma(u) =0).$

Thus 
\begin{equation*}
\begin{split}
	\wt\Phi\lrpar{\delta\lrpar u} 
	&= \lrpar{\Phi d e\lrpar u, 0, 0} 
	= \lrpar{ D_{\bbS W}\Phi e\lrpar u, 0, 0} \\
	&= D\lrpar{\Phi e\lrpar u, \gamma\lrpar u, 0} +
	\lrpar{ 0, 0, i^*_{\bar L} u, (-1)^{k-1}\nu^{\bar L}_* R^W_* u} \\
	&= D\lrpar{\Phi e\lrpar u, \gamma\lrpar u, 0} + \lrpar{ 0, 0, \Phi_{\bar L} u},
\end{split}
\end{equation*}
which shows that the induced maps in cohomology commute. It then follows
from the Five Lemma that the map $\wt \Phi$ is an isomorphism, i.e.,
$\cH^k\lrpar{\cZ_*;D_Z} \cong H^k_c\lrpar{W, W\rest{\pa X}}.$

To see that the complex $\cC_k\lrpar{\cT_*;D_T}$ represents $H^k_c\lrpar{W,
  W\rest{\pa X}}$ consider first the complex
\begin{gather*}
\CI\lrpar{X;\Lambda^*} \oplus 
\lrpar{ \CI\lrpar{\bbS W;\Lambda^*} \oplus \CI\lrpar{\pa X;\Lambda^*} }
\oplus \CI\lrpar{\bbS W\rest{\pa X};\Lambda^*} \\
\text{ with differential}
\begin{pmatrix} d & & & \\
-\pi^* & -d & & \\
i^* & & -d & \\
& i^* & \pi^* &d \end{pmatrix} 
\end{gather*}
which in view of \S\ref{rel-mwb} and \S\ref{SphereBundle} represents
$H^k_c\lrpar{W, W\rest{\pa X}}$ and then note that the complex
$\cC_k\lrpar{\cT_*;D_T}$ is obtained from this complex by applying the
push-forward along $\bar L$ which was shown in Lemma~\ref{rccbif.18} to be
an isomorphism.
\end{proof}

\section{Pseudodifferential operators and relative K-theory} \label{sec:Kthy}

In the standard case of Atiyah and Singer, the index of a vertical family
of Fredholm operators, $A$, acting on a superbundle $\bbE=E^+\oplus E^-$ on
the fibers of a fibration of closed manifolds $M \xrightarrow\phi B$ is
naturally thought of as an element of the topological K-theory group of
$B,$ e.g., 
\begin{equation*}
	\lrspar{\ker A} - \lrspar{\coker A} \in K(B)
\end{equation*}
when $\ker A$ (and hence $\coker A$) is a bundle over $B.$ On the other
hand $A$ itself, via its symbol
\begin{equation*}
	\sigma \lrpar A \in \CI\lrpar{\bbS^*M/B;\hom \bbE}
\end{equation*}
and the clutching construction, defines an element of $\Kc\lrpar{T^*M/B}$
and the index factors through this map 
\begin{equation*}
\xymatrix{
\Psi^*\lrpar{M/B;\bbE} \ar[rr]^{\Ind} \ar[rd]_{[a]}
&
&
K\lrpar B
\\
&
\Kc\lrpar{T^*M/B} \ar@{-->}[ru]_{\Ind_{\text{a}}}
&}.
\end{equation*}

One way to see this factorization is to start with a family $A$ as
above, say made up of pseudodifferentical operators of order zero, 
and eliminate properties that the index does not see.  That is, let
$\cK\lrpar{M/B}$ be the set of equivalence classes of vertical
pseudodifferential operators of order zero acting on superbundles $\bbE$ over $M$ where
two operators are considered equivalent if they can be connected by a
finite sequence of relations:
\begin{itemize}
\item [i)] $A\in \Psi^0\lrpar{M/B;\bbE} \sim B\in\Psi^0\lrpar{M/B;\bbF}$ if
there is a (graded) bundle isomorphism $\Phi:\bbE\to\bbF$ over $M$ such
that $B = \Phi^{-1} A \Phi,$
\item [ii)] $A\in \Psi^0\lrpar{M/B;\bbE} \sim \wt
  A\in\Psi^0\lrpar{M/B;\bbE}$ if $A$ and $\wt A$ are homotopic within
 elliptic operators, 
\item [iii)] $A\in \Psi^0\lrpar{M/B;\bbE} \sim
A\oplus\Id\in\Psi^0\lrpar{M/B;\bbE\oplus\bbC^{n|n}}$ where $\bbC^{n|n}$
is the trivial superbundle whose $\bbZ/2$ grading components are both
$\bbC^n.$ 
\end{itemize}
The resulting equivalence classes form a group, $\cK\lrpar{M/B}$ which can
be thought of as `smooth K-theory' and in this case is well-known to
coincide with the topological K-theory group $\Kc\lrpar{T^*M/B}.$ Indeed, 
the equivalence class of an operator only depends on its principal symbol 
\begin{equation*}
A \in \Psi^0\lrpar{M/B;\bbE} \implies
\sigma\lrpar A \in \CI\lrpar{S^*M/B;\hom \bbE}
\end{equation*}
in terms of which ($i$)-($iii$) give a standard `relative' definition of
$\Kc\lrpar{T^*M/B}.$  

Alternately, we can think of pseudodifferential operators of order $0$ 
as bounded operators acting on $L^2\lrpar{M/B}$ (defined,
e.g., using a Riemannian metric). These form a $*$-algebra and the closure
is a $C^*$-algebra, $\df A$, containing the compact operators, $\df K$. The
Fredholm operators are the invertibles in $\df A/\df K$, and the smooth
K-theory group described above is closely related to the odd $C^*$ K-theory
group of this quotient, $K_{C^*}^1\lrpar{\df A/\df K}.$ Indeed, the
principal symbol extends to a continuous map on $\df A,$
\begin{equation*}
  A\in \df A \implies \sigma \lrpar A \in \cC^0\lrpar{S^*M/B;\bbC},
\end{equation*}
which descends to the quotient $\df A/\df K$ (i.e., vanishes on $\df K$)
and allows us to identify the odd K-theory group with the stable homotopy
classes of invertible maps $S^*M/B \to \bbC.$ 

The most obvious difference between the smooth K-theory group and the $C^*$
K-theory group -- that the former is built up from smooth functions while
the latter from continuous functions -- disappears in the quotient. A more
significant difference comes from the way bundle coefficients are handled.
Stabilization allows us to replace the elliptic elements in
$\Psi^0\lrpar{M/B}$ with $\displaystyle
\lim_{\to}\mathrm{GL}_N\lrpar{\Psi^0\lrpar{M/B}},$ the direct limit of the
groups of invertible square matrices of arbitrary size and entries in
$\Psi^0\lrpar{M/B}.$ Note that an operator $A \in \Psi^0\lrpar{M/B;\bbE}$
acting on a superbundle $\bbE$ defines an element of the stabilization of
$\Psi^0\lrpar{M/B}$ by choosing a vector bundle $F$ such that $\bbC^j\cong
\bbE \oplus F$ (for some $j$) since then $\wt A=A \oplus \Id_F \in
\mathrm{GL}_j\lrpar{\Psi^0\lrpar{M/B}};$ a different choice of $F$ defines
the same element in $K^1_{C^*}\lrpar{M/B}.$ However if $\Phi$ is as in
($i$) above, it is possible that $\Phi^{-1} A \Phi$ and $A$ will define
distinct elements of $K^1_{C^*}\lrpar{M/B},$ and so the difference between
the smooth K-theory groups and the $C^*$ K-theory groups is essentially
that in the former we quotient out by ($i$) above. This is further pursued
in \cite[\S 2.3]{Albin-Melrose1}.

In the present paper we allow the fibers of the fibration $M\xrightarrow\phi B$ to have
boundary and consider three different calculi of
operators on a manifold with boundary, namely the scattering calculus, the zero
calculus, and the transmission calculus. In each case we work with the
smooth K-theory group as in the previous paragraph (denoted by
$\cK_{sc}\lrpar{M/B}$, $\cK_0\lrpar{M/B}$, and $\cK_{\tm}\lrpar{M/B}$
respectively) and, for the purposes of index theory, these groups contain all
of the relevant information. These groups have been shown to be isomorphic
to the topological K-theory group, $\Kc\lrpar{T^*M^\circ/B}$, in
\cite{RedBook} \cite{Melrose-Rochon} (scattering calculus),
\cite{Albin-Melrose1} ($0$-calculus), and \cite{BoutetdeMonvel1}
(transmission calculus). We briefly review what this entails.

\subsubsection*{Scattering calculus} A scattering operator $A \in
\Psi_{sc}^0\lrpar{M/B;\bbE}$ is determined up to a compact operator by its
image under two homomorphisms: the principal symbol $\sigma\lrpar
A\in\CI(\bbS^*M/B;\pi^*\hom \bbE)$ which is an homomorphism between the
lifts of $E^+$ and $E^-$ to $\bbS^*M/B$, and the boundary symbol
$b\in\CI(\comp{T^*M/B}_{\pa M};\pi^*\hom(\bbE))$ which is an homomorphism
between the lifts of $E^+$ and $E^-$ to the radial compactification of $T^*M/B$ over the
boundary. These symbols are equal on the common boundary of $\bbS^*M/B$
and $\comp{T^*M/B}_{\pa M},$ and so can be thought of as jointly
representing a section of $\hom(\bbE)$ lifted to the whole boundary of the
compact manifold with corners $\comp{T^*M/B}.$ An
operator is Fredholm on $L^2$ precisely when both of these symbols are
invertible, we call such an operator `fully elliptic'.

A fully elliptic operator $A$ can be deformed by homotopy within such
operators operators until $b,$ and $a$ in a neighborhood of the boundary,
are equal to a fixed bundle isomorphism. This isomorphism can then be used
to change the bundles so that $b$ is the identity and $a$ is the identity
near the boundary. This leads to an (arbitrary) invertible map into
$\hom(\bbE)$ that is the identity in an neighborhood of the boundary and
this is precisely the information that defines a relative K-theory class,
hence
\begin{equation}
\cK_{sc}\lrpar{M/B} = \Kc\lrpar{T^*M^\circ/B}.
\label{KThySc1}\end{equation}

The fully elliptic scattering operators of order zero with interior
symbol equal to the identity can be reduced by homotopy to perturbations of
the identity by scattering operators of order $-\infty.$ The smooth
K-theory of these perturbations of the identity is denoted
$\cK_{sc,-\infty}\lrpar{\pa M/B}$ and is readily seen to be equal to the
topological K-theory of the boundary,
\begin{equation}
\cK_{-\infty,sc}\lrpar{M/B} = \Kc\lrpar{T^*\pa M/B}.
\label{KThySc2}\end{equation}

\subsubsection*{Zero calculus} The analogues of \eqref{KThySc1} and
\eqref{KThySc2} also hold for the zero calculus, as shown in
\cite{Albin-Melrose1}. However where the scattering calculus is
`asymptotically commutative' as evinced in the boundary symbol
$b\in\CI(\comp{T^*M/B}_{\pa M};\pi^*\hom(\bbE))$, the zero calculus is
asymptotically commutative only in the directions tangent to the boundary
and is non-commutative in the direction normal to the boundary. Thus,
instead of a boundary symbol, the boundary behavior of a zero operator is
captured by a family of operators on a one-dimensional space, $\cI$, essentially
the compactified normal bundle to the boundary. This family, the reduced normal
operator
\begin{equation*}
	\cN\lrpar A \in \CI\lrpar{S^*\pa M/B;\Psi^0_{b,c}\lrpar{\cI;\bbE}},
\end{equation*}
takes values in the $b,c$ calculus ($b$ at one end of the interval, $c$ at
the other), and together with the interior symbol, determines the smooth
K-theory class of an operator $A\in\Psi^0_0\lrpar{M/B;\bbE}$.  A
description of the reduced normal operator is included in Appendix
\ref{NormalOp}.  As an element of the $b,c$ calculus, $\cN\lrpar A$ has
three model operators: its principal symbol, an indicial family at the
$b$-end, and an indicial family at the cusp end.  One can think of the
smooth K-theory as equivalence classes of zero pseudodifferential operators
or alternately as equivalence classes of invertible pairs
\begin{equation}
\begin{gathered} \lrpar{\sigma, \cN} \in
\CI\lrpar{S^*M/B;\hom\bbE} \oplus \CI\lrpar{S^*\pa M/B;
\Psi^0_{b,c}\lrpar{\cI;\bbE}}
\\
\Mst I_b\lrpar{\cN\lrpar{y,\eta}} =I_b\lrpar{\cN\lrpar{y,\eta'}},
\\
I_c\lrpar{\cN\lrpar{y,\eta}}\lrpar{\xi} =
    \sigma\lrpar{0,y,\frac{\xi}{\ang{\xi}},\frac{\eta}{\ang{\xi}}},
\\
\sigma\lrpar{\cN\lrpar{y,\eta}}\lrpar{\omega} = \sigma\lrpar{0,y,\omega,0}.
\end{gathered}
\label{ZeroPairs}\end{equation}

The isomorphism 
\begin{equation*}
	\cK_{-\infty,0}\lrpar{M/B} \cong \Kc\lrpar{T^*\pa M/B},
\end{equation*}
between the group of stable equivalence classes of invertible reduced
normal families of perturbations of the identity and a standard
presentation of $\Kc(T^*\pa M/B)$ comes from the contractibility of the
underlying semigroup of invertible operators on the interval
\cite{Albin-Melrose1}. Namely, this allows the reduced normal family to be
connected (after stabilization) to the identity through a curve of maps
$A(t)$ from $S^*\pa M/B$ into the invertible b-operators of the form
$\Id+A,$ $A$ of order $-\infty$ and non-trivial only at the one end of the
interval. The $b$-indicial family of $A(t)$ determines an invertible map
from $\comp{T^*\pa M/B}$ into $\hom\bbE$ equal to the identity at infinity
and hence an element of $\Kc\lrpar{T^*\pa M/B}$.

The contractibility of the group of invertible group of smooth
perturbations of the identity within the cusp calculus
\cite{Melrose-Rochon0} is used to show the isomorphism
\begin{equation*}
	\cK_0\lrpar{M/B} \cong \Kc\lrpar{T^*M^\circ/B}
\end{equation*}
between the group of stable equivalence classes of invertible pairs
\eqref{ZeroPairs} and the standard representation of
$\Kc\lrpar{T^*M^\circ/B}$.  Indeed, one can identify $E^+$ and $E^-$ near
the boundary, and, after a homotopy and a smooth perturbation, quantize by
a zero operator whose {\em full} $b$-indicial family is the identity, and then
use the contractibility to take the reduced normal operator to the
identity. The principal symbol of the resulting operator is equal to the
identity near the boundary and classically defines an element of
$\Kc\lrpar{T^*M^\circ/B}$.

\subsubsection*{Transmission calculus}

A smooth K-theory class in the Boutet de Monvel calculus is an equivalence
class of operators of the form
\begin{equation}
	\cA=\begin{pmatrix}
	    	\gamma^+A+B &  & K \\ \\ T & & Q 
	    \end{pmatrix}
	:\begin{matrix}
	 	\CI\lrpar{X;E^+} & & \CI\lrpar{X;E^-} \\ \oplus & \to & \oplus \\ 
		\CI\lrpar{\pa X;F^+} & &\CI\lrpar{\pa X;F^-} 
	 \end{matrix}
\label{BdMmatrix}\end{equation}
acting on the superbundles $\bbE$ over $X$ and $\bbF$ over $\pa X$.  The
principal symbol of $A$ is required to satisfy the {\em transmission
  condition} at the boundary, and in particular this forces the order of
$A$ to be an integer. We can assume without loss of generality that the
order of $A$ and its `type' are both zero (see \cite{BoutetdeMonvel1},
\cite{Grubb}).

The boundary behavior of $A$ is modeled by a family of Wiener-Hopf
operators parametrized by the cosphere bundle over the boundary. For $A$ as
above we denote its boundary symbol by
\begin{equation}
	N\lrpar A\lrpar{y,\eta}=\begin{pmatrix}
	    	h^+p+b &  & k \\ \\ t & & q 
	    \end{pmatrix}
	:\begin{matrix}
	 	\CI\lrpar{X;H^+\otimes E^+_y} & & \CI\lrpar{X; H^+\otimes E^-_y} 
			\\ \oplus & \to & \oplus \\ 
		\CI\lrpar{\pa X;F^+_y} & &\CI\lrpar{\pa X;F^-_y} 
	 \end{matrix}
\label{BdyBoutet}\end{equation}
where $h^+$ is the projection from the space of functions
$\CI\lrpar{\bbR;\bbC}$ that have a regular pole at infinity to the subspace
$H^+$ of those that vanish at infinity and can be continued analytically to
the lower half-plane.

Boutet de Monvel showed that any operator $A$ whose principal symbol and
boundary symbol are both invertible is homotopic through such operators to
one of the form
\begin{equation*}
	\begin{pmatrix}
	    	\gamma^+\wt A &  0 \\ 0 & \wt Q 
	    \end{pmatrix}
\end{equation*}
where $\wt A$ is equal to the identity near the boundary. Thus
$\cK_{\tm}\lrpar{M/B}$ clearly surjects onto $\Kc\lrpar{T^*M^\circ/B}$ which
is all we will need. However we point out that, using the description of the $C^*$-algebra
K-theory from, e.g., \cite{MNS}, \cite{MSS}, and the comparison with smooth
K-theory in \cite[\S 2.3]{Albin-Melrose1}, it is possible to show that
$\cK_{\tm}\lrpar{M/B}$ is actually equal to $\Kc\lrpar{T^*M^\circ/B}$.

\section{Chern character and the families index theorem}\label{sec:Chern}

The Chern character is a homomorphism from $\Ch:K(X)\longrightarrow
H^{\even}(X)$ which gives an isomorphism after tensoring with $\bbQ.$ Chern-Weil
theory gives a direct representation of the Chern character in deRham
cohomology for a superbundle $\bbE.$ If $\nabla$ is a graded connection on
$\bbE,$ i.e.\ a pair of connections $\nabla^\pm$ on $E^\pm$ with curvatures
$(\nabla^\pm)^2=-2\pi i \omega_\pm$ then
\begin{equation}
\Ch\lrpar\bbE = e^{\omega_+} - e^{\omega_-}. 
\label{rccbif.121}\end{equation}
Different choices of connection are homotopic and give cohomologous closed forms.

In the case of a compact manifold with boundary $X,$ the K-theory with
compact support in the interior, denoted here $\Kc(X\setminus\pa X)$, is
represented by superbundles $\bbE$ where $E^\pm=\bbC^N$ near the
boundary. Then the same formula, \eqref{rccbif.121}, gives the relative
Chern character $\Ch:\Kc(X\setminus\pa X)\longrightarrow
\Hc^{\even}(X\setminus\pa X)$ provided the connections are chosen to reduce to
the trivial connection, $d,$ near the boundary.

There are natural isomorphism $\Kc(X\setminus\pa
X)\longrightarrow\operatorname{K}(X,\pa X)$ and $\Hc^{\even}(X\setminus\pa
X)\longrightarrow H^{\even}(X,\pa X)$ with the corresponding relative
objects. Chains for $\Kc(X\setminus\pa X)$ are given by pairs $(\bbE,a)$ of
a superbundle over $X$ and an isomorphism $a:E^+\longrightarrow E^-$ over
$\pa X.$ In \cite{MR1401125} Fedosov gives an explicit formula for the
Chern character, in cohomology with compact supports of the cotangent
bundle, of the symbol of an elliptic operator acting between vector
bundles. This can be modified to give the relative Chern character in this
setting with values in the chain space discussed in Lemma~\ref{rccbif.12}
\begin{equation}
\begin{gathered}
\Ch(\bbE,a)=(\Ch(\bbE),\tCh(a))
\\
\tCh(a)=-\frac1{2\pi i}
	\int_0^1 \tr\left(a^{-1}(\nabla a)e^{w(t)}\right)dt,
\\
\Mwhere
w(t)=(1-t)\omega _++ta^{-1}\omega _-a + \frac 1{2\pi i} t(1-t)(a^{-1}\nabla a)^2.
\end{gathered}
\label{rccbif.22A}\end{equation}
Here the boundary term, $\tCh(a),$ is a `regularized' (or improper) form of
the odd Chern character.

In the context of the index formula this also corresponds rather naturally
to the relative cohomology as discussed above. Essentially by
reinterpretation we find 

\begin{proposition}[Fedosov \cite{MR1401125}]\label{Z-i.7} If
$\pi:U\longrightarrow X$ is a real vector bundle, $\bbE\longrightarrow X$
is a superbundle and $a\in\CI(\bbS U ;\pi^*(\hom(\bbE))$ is elliptic (\ie
invertible) then for any graded connection on $\bbE,$ with curvatures
$\Omega_\pm=-2\pi i \omega _\pm$ and induced connection $\nabla$ on
$\hom(\bbE),$ the class
\begin{equation}
\Ch(U,\bbE,a)=(\Ch(\bbE),\tCh(a))\in
	\CI(X;\Lambda^{\even})\oplus\CI(\bbS U;\Lambda^{\odd}),
\label{Z-i.8}\end{equation}
given by the formul\ae \eqref{rccbif.121} and \eqref{rccbif.22A},
represents the relative Chern character  
\begin{equation}
	\Kc^0(U)\longrightarrow \cH^{\odd}(\bbS U,\pi)\simeq \Hc^{\even}(U).
\label{Z-i.89}\end{equation}
\end{proposition}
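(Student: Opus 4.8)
The plan is to reduce this statement to Fedosov's original theorem together with the identification of cohomology groups already established in Lemma~\ref{rccbif.16}. Fedosov's formula in \cite{MR1401125} produces, from the data $(\bbE,a)$ with $a$ an invertible symbol on $\bbS U$, a closed differential form with compact support on the total space of $U$ representing $\Ch$ of the K-class determined by the clutching construction; concretely this is a form $\Omega(\bbE,a)\in\CIc(U;\Lambda^{\even})$. What must be checked here is that under the explicit isomorphism $\Phi$ of Lemma~\ref{rccbif.16}, which sends a compactly supported closed form $\omega$ on $U$ to $(i_0^*\omega,(-1)^{k-1}R_*\omega)$ in the relative complex $\CI(X;\Lambda^*)\oplus\CI(\bbS U;\Lambda^{*-1})$, the image of Fedosov's form is cohomologous to the pair $(\Ch(\bbE),\tCh(a))$ given by \eqref{rccbif.121} and \eqref{rccbif.22A}. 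So the first step is to recall Fedosov's construction precisely: one picks connections $\nabla^\pm$ on $E^\pm$, pulls everything back to $U$, and interpolates between the curvature of $\nabla^+$ and that of $a^{-1}\nabla^- a$ using the parameter $t$ together with a radial cutoff in the fibre of $U$; the resulting form is supported where the radial variable is bounded and is exact away from there.

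The second step is the computation of $i_0^*$ and $R_*$ of this form. Restriction to the zero section kills all the fibrewise-radial dependence and the interpolation parameter degenerates, leaving exactly $\tr e^{\omega_+}-\tr e^{\omega_-}=\Ch(\bbE)$; this is the easy half. The fibre integration $R_*$ over $\bbR^+$ (equivalently, integrating out the radial variable after introducing polar coordinates $U\setminus 0_X\cong\bbS U\times\bbR^+$) is where the parameter integral $\int_0^1$ in \eqref{rccbif.22A} comes from: the $dr\wedge$ component of Fedosov's interpolating form, once the cutoff is pushed to infinity, integrates in $r$ to produce precisely $-\frac1{2\pi i}\int_0^1\tr(a^{-1}(\nabla a)e^{w(t)})\,dt$ with $w(t)$ as displayed. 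I would carry this out by writing Fedosov's superconnection-type form in the split coordinates $(r,t)$ on $\bbS U\times\bbR^+\times[0,1]$, identifying which terms carry $dr$, and recognizing the $r$-integral as telescoping (the boundary contributions at $r=0$ and $r=\infty$ being absorbed into $\Ch(\bbE)$ and into exactness at infinity respectively).

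The third step is bookkeeping: one checks that $(\Ch(\bbE),\tCh(a))$ is genuinely $D$-closed for $D=\begin{pmatrix}d&0\\-\pi^*&-d\end{pmatrix}$, i.e.\ that $d\Ch(\bbE)=0$ (Chern--Weil) and that $-\pi^*\Ch(\bbE)-d\tCh(a)=0$; the latter is exactly the transgression identity satisfied by Fedosov's form and is where invertibility of $a$ enters, guaranteeing $a^{-1}$ makes sense on all of $\bbS U$. Then one invokes Lemma~\ref{rccbif.16} to transport this class to $\Hc^{\even}(U)$, and the fact that $\Phi$ of Fedosov's representative agrees with it in cohomology shows the pair represents the relative Chern character as claimed; homotopy invariance (Lemma~\ref{HomotopyInv}) handles independence of the choice of connection. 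The main obstacle I anticipate is the second step --- matching the radial fibre integral of Fedosov's form against the specific closed expression $w(t)=(1-t)\omega_++ta^{-1}\omega_-a+\frac1{2\pi i}t(1-t)(a^{-1}\nabla a)^2$; this requires care with the placement of the cutoff and with signs coming from $dr\wedge$, and it is really the only place where something beyond formal manipulation happens. Everything else is either quoting Fedosov, quoting Lemma~\ref{rccbif.16}, or routine sign-chasing.
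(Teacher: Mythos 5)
Your proposal is correct in outline but structures the argument differently from the paper, and it's worth being clear about the inversion. What you call ``step 3'' --- verifying that $(\Ch(\bbE),\tCh(a))$ is $D$-closed, i.e.\ that $d\tCh(a)=-\pi^*\Ch(\bbE)$ --- is in fact the entire substance of the paper's proof, not a bookkeeping afterthought. The paper establishes this by a transgression calculation: setting $\theta=a^{-1}\nabla a$, the deformed connection $\wt\nabla=\nabla+[t\theta,\cdot]$ has curvature $-2\pi i\omega(t)$ with $\omega(t)$ exactly your $w(t)$, and then $\nabla e^{\omega(t)}=t[e^{\omega(t)},\theta]$ turns $d\tCh(a)$ into $\int_0^1\tr(\dot\omega(t)e^{\omega(t)})\,dt=\pi^*\tr e^{\omega_-}-\pi^*\tr e^{\omega_+}$ after the commutator terms are killed by the trace. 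This is not ``routine sign-chasing'' but the computation the later scattering, zero, and transmission proofs all lean on. Conversely, what you propose as steps 1 and 2 --- writing out Fedosov's compactly supported representative on $U$ with its radial cutoff, then computing $i_0^*$ and $R_*$ of it in polar coordinates to match $(\Ch(\bbE),\tCh(a))$ up to coboundary --- is precisely what the paper chooses \emph{not} to do, handling it instead by the one-line citation ``that this class actually represents the (appropriately normalized) Chern character follows from Fedosov's derivation.'' So your route is more self-contained and genuinely unwinds the citation, at the cost of the delicate radial fibre integral and cutoff bookkeeping you rightly flag as the hard part; the paper's route is shorter and keeps Fedosov's normalization as a black box. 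Both are valid, and the two halves are complementary: you should be aware that if you actually carry out your step 2 you will in any case need a version of the transgression identity to see that the $r$-integral telescopes, so the paper's calculation is not avoidable --- it's just relocated.
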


\begin{proof} 
That the pair $(\Ch(\bbE), \tCh(a))$ is $D$-closed and gives a
well-defined class in the cohomology theory follows in essence as in
standard Chern-Weil theory. We include such an argument for completeness
and for subsequent generalization.

Set $\theta=a^{-1}\nabla a$ and note that the connection $\wt\nabla =
\nabla + \lrspar{t\theta,\cdot}$ has curvature
\begin{equation*}
	\Omega\lrpar t
	= -2\pi i\omega_+ + \lrpar{ t\nabla\theta +t^2\theta^2}
	= -2\pi i \omega(t)
\end{equation*}
It follows that $\wt\nabla e^{ \omega(t) } =0$ and hence
\begin{equation*}
\nabla e^{\omega(t)} = \wt\nabla e^{\omega(t)} - t\lrspar{\theta, e^{\omega(t)}}
	=t\lrspar{e^{\omega(t)},\theta}.
\end{equation*}
Thus
\begin{equation}\begin{split}
d\tCh(a)
&=-\frac1{2\pi i}\int_0^1 \tr\nabla\lrpar{\theta e^{\omega(t)}}\;dt\\
&=-\frac1{2\pi i}\int_0^1\tr\lrpar{ \lrpar{\nabla\theta} e^{\omega(t)} -
	\theta\nabla e^{\omega(t)}}\;dt\\
&=-\frac1{2\pi i}\int_0^1\tr\lrpar{ \lrpar{\nabla\theta} e^{\omega(t)} -
	t\theta\lrspar{e^{\omega(t)},\theta}}\;dt\\
&=-\frac1{2\pi i}\int_0^1\tr\lrpar{ \lrpar{\nabla\theta+2t\theta^2} e^{\omega(t)} 
		- t\lrspar{\theta e^{\omega(t)},\theta}}\;dt
\label{Z-i.58}\end{split}\end{equation}
and since the trace vanishes on graded commutators,
\begin{equation}
d\tCh(a)=\int_0^1 \tr\left(\frac{d \omega(t)}{dt}e^{\omega(t)}\right)dt
=\pi^*\tr(e^{\omega_-})-\pi^*\tr(e^{\omega_+})
=-\pi^*\Ch(\bbE).
\label{Z-i.57}\end{equation}

As is well-known (and explained in \S\ref{sec:HtpyInv}), this same formula
shows independence of the connection and homotopy invariance.  That this
class actually represents the (appropriately normalized) Chern character
follows from Fedosov's derivation in \cite{MR1401125}.  \end{proof}

For an elliptic family of pseudodifferential operators
$A\in\Psi^0(M/B;\bbE)$ where $\psi:M\longrightarrow B$ is a fibration with
typical fibre $Z$ and $\bbE$ is a superbundle over $M,$ the symbol $\sigma
(A)\in\CI(S^*(M/B);\pi^*\hom(\bbE))$ is invertible (by assumption) and the
discussion above applies with $U=T^*(M/B),$ the fibre cotangent bundle. The
index formula of Atiyah and Singer is then given as a composite 
\begin{equation}
\xymatrix{ 
	\Kc(T^*(M/B)) \ar@{-->}[r]^{\ind} \ar[d]^{\Ch} & H^{\even}\lrpar B\\
	\cH^{\odd}(S^*(M/B),\pi) \ar[r]^{\wedge\Td(Z)}&
	\cH^{\odd}(S^*(M/B),\pi)\ar[u]^{\int_{S^*Z}}}
\label{Z-i.26}\end{equation}
So, for such a family of operators,
\begin{equation}
\Ch(\ind(A))=\int_{S^*Z}\Ch(T^*(M/B),\bbE,\sigma(A))\wedge\Td(Z)
\label{Z-i.13}\end{equation}
where $\Td(Z)$ is the Todd class of $Z.$ That this is well-defined follows
from \eqref{FibPush}.

\subsection{Scattering families index theorem}

Consider next a fibration of manifolds with boundary $M \xrightarrow{\psi}
B.$ As described in \S\ref{sec:Kthy} the compactly supported K-theory of
$W=T^*M^\circ/B$ can be represented by scattering operators. The K-theory
class of a scattering operator is determined by its two symbol maps, its
principal symbol and its boundary symbol. We now explain how to represent
the Chern character of the corresponding K-theory class in terms of this data.

In fact given any manifold with boundary $M$ and a bundle $W \to M$ any
class in the compactly supported K-theory of $W\rest{M\setminus \pa M}$ can
be represented by a superbundle $\bbE \to M$ and two invertible maps
\begin{equation*}
	a \in \CI(\bbS^*W;\pi^*\hom \bbE),\
	b \in \CI(\comp{W}_{\pa M};\pi^*\hom \bbE).
\end{equation*}
We recall, from \S\ref{rel-scat}, that one may compute the cohomology
$H^*_c(W\rest{M\setminus \pa M})$ via the complex
\begin{gather*}
\CI(M;\Lambda^k) \oplus
\left\{(\alpha,\beta);\alpha\in\CI(\bbS W;\Lambda ^{k-1}),\ 
\beta\in\CI(\comp{W}_{\pa M};\Lambda ^{k-1})\Mand
\iota _{\pa}^*\alpha=\iota _{\pa}^*\beta\right\},
\\
D = \begin{pmatrix}
	d & 0 \\ \phi & -d  
\end{pmatrix},
\Mwhere
\phi = \begin{pmatrix} -\pi^*
\\
-\pi^*i^* \end{pmatrix}.
\end{gather*}
Then, choosing a graded connection on $\bbE$, the forms, again using
\eqref{rccbif.121} and \eqref{rccbif.22A},
\begin{multline}
\Ch(W,\bbE,a,b)=\lrpar{ \Ch\lrpar \bbE,\tCh(a),\tCh(b) } \\
\in \CI\lrpar{X;\Lambda^\even}
\oplus \CI\lrpar{\bbS W;\Lambda^\odd}
\oplus \CI\lrpar{\comp W_{\pa M}; \Lambda^\odd},
\label{Z-i.52}\end{multline}
represent the Chern character of the K-theory class associated to
$\lrpar{W, \bbE, a, b}.$

\begin{proposition}\label{ScatChern} 
For $W$, $\bbE$, $a$, and $b$ as
above, $\Ch(W,\bbE,a,b)$ is $D$-closed and its relative cohomology class
coincides with the Chern character of the K-theory class defined by
$(W,\bbE,a,b).$ 
\end{proposition}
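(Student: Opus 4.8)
The plan is to follow the pattern of the proof of Proposition~\ref{Z-i.7}: first verify that $\Ch(W,\bbE,a,b)$ of \eqref{Z-i.52} is a cocycle in the complex \eqref{Z-i.66}--\eqref{Z-i.68}, then that its class is independent of the auxiliary connection, and finally that under the retraction of \S\ref{rel-scat} it is carried to the topological Chern character of $[(W,\bbE,a,b)]$. For the first point I would fix a single graded connection $\nabla$ on $\bbE$ over $M$ and use its pull-backs and restrictions everywhere, so that all three components of \eqref{Z-i.52} are built from the same $\nabla$. Then $D$-closedness reduces to the three identities $d\Ch(\bbE)=0$, $d\tCh(a)=-\pi^*\Ch(\bbE)$ on $\bbS W$, and $d\tCh(b)=-\pi^*\iota^*\Ch(\bbE)$ on $\comp W_{\pa M}$. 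The first is standard Chern--Weil, and the other two are precisely the computation \eqref{Z-i.57} in the proof of Proposition~\ref{Z-i.7}; that computation uses only that $a$ (resp.\ $b$) is a smooth invertible section of $\pi^*\hom(\bbE)$ together with the Leibniz rule, never that the base of $\pi$ is a sphere bundle, so it applies verbatim with $\pi:\comp W_{\pa M}\to\pa M$ and the restricted bundle $\iota^*\bbE$ (invertibility of $b$ on the whole compactification guaranteeing $\tCh(b)$ is globally smooth). It then remains to note that $(\tCh(a),\tCh(b))$ lies in the subspace in \eqref{Z-i.66}, i.e.\ $\iota_\pa^*\tCh(a)=\iota_\pa^*\tCh(b)$ over the corner $\bbS W_{\pa M}$: since $a$ and $b$ restrict there to the same section of $\pi^*\hom(\bbE)$ and $\tCh$ is given by the universal expression \eqref{rccbif.22A} in the symbol and $\nabla$, the two restrictions coincide.

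Independence of the connection is the homotopy argument of \S\ref{sec:HtpyInv}: a path $\nabla_r$ of graded connections produces, through the same formulas, a $\wt D$-closed element of the thickened complex $\wt\cC_*$ built from \eqref{Z-i.66} as in \eqref{NormalSplitting}, so Lemma~\ref{HomotopyInv} gives that the two endpoints are cohomologous; one checks en route that the corner identification is preserved for all $r$, so the homotopy stays inside the complex. The same mechanism shows that a homotopy of the pair $(a,b)$ through invertible pairs does not change the class.

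The substantive step is identifying this class with the topological Chern character of $[(W,\bbE,a,b)]\in\Kc\lrpar{W\rest{M\setminus\pa M}}$. I would show $[\Ch(W,\bbE,a,b)]$ is invariant under the moves used in \S\ref{sec:Kthy} to normalize a representative of a class in $\cK_{sc}$: a graded bundle isomorphism $\Phi:\bbE\to\bbF$ is absorbed by transporting $\nabla$, after which the Chern forms agree and the previous paragraph removes the connection dependence; a homotopy of $(a,b)$ through invertibles is handled as above; and stabilization by $\Id$ on a trivial summand carrying the trivial connection contributes nothing, since $\tCh(\Id)=0$ and $\tr e^{0}-\tr e^{0}=0$. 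Using these moves and the reduction recalled in \S\ref{sec:Kthy} one may assume $\bbE$ is trivial with trivial connection near $\pa M$, that $b=\Id$ (so $\tCh(b)=0$), and that $a=\Id$ near $\pa M$. Then $\Ch(W,\bbE,a,b)$ has vanishing $\comp W_{\pa M}$-component and its first two components are supported over a compact subset of $M\setminus\pa M$; applying the retraction of Lemma~\ref{rccbif.19}, which carries \eqref{Z-i.66} to the complex of forms supported in $W\rest{M\setminus\pa M}$ and hence, via Lemma~\ref{rccbif.16}, to the sphere-bundle model of $\Hc^{*}$, sends it to exactly the Fedosov representative $(\Ch(\bbE),\tCh(a))$. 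By Fedosov's formula \eqref{rccbif.22A} in the compactly supported setting described before Proposition~\ref{Z-i.7}, this represents the relative Chern character of the K-class clutched from $(\bbE,a)$, which is $[(W,\bbE,a,b)]$ by the construction recalled in \S\ref{sec:Kthy}.

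I expect the main obstacle to be the bookkeeping in this last step: one must realize every normalizing homotopy of the symbolic data, together with the accompanying change of bundles, as an honest $\wt D$-closed element of the thickened complex that \emph{respects the corner identification}, and then match the resulting interior class with Fedosov's representative with the correct normalization constant. The cocycle property and connection-independence are routine once Proposition~\ref{Z-i.7} is in hand; essentially all the real work is in the reduction to normal form.
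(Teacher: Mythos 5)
Your proposal follows essentially the same route as the paper's proof: verify $D$-closedness via the Chern--Weil computation from Proposition~\ref{Z-i.7} (including the corner compatibility $\iota_\pa^*\tCh(a)=\iota_\pa^*\tCh(b)$, which the paper leaves implicit), establish homotopy and stabilization invariance so only the $K$-class matters, and then reduce to a representative with $b=\Id$ and $a=\Id$ near $\pa M$ where the formula coincides with Fedosov's. The paper compresses all of this into a few sentences, but the content and the logical structure are the same.
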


\begin{proof}
From the definition of the differential in \S\ref{rel-scat}, $D$-closed means that
\begin{gather*}
	d_X\Ch\lrpar \bbE =0,\
	d_{\bbS W}\tCh\lrpar a = -\pi^*\Ch\lrpar \bbE\Mand
	d_{\bar W_{\pa X}}\tCh\lrpar b = -\pi^*i^*\Ch\lrpar \bbE.
\end{gather*}
Thus that the putative Chern character is $D$-closed and homotopy invariant
follow just as in Lemma \ref{Z-i.7}. It is also invariant under changes of
$\bbE$ by stabilization and bundle isomorphism since this is true of the
forms $\lrpar{ \Ch\lrpar \bbE,\tCh(a),\tCh(b) }$ themselves, and hence it only
depends on the K-theory class.

As explained in \cite{RedBook} and reviewed in \S\ref{sec:Kthy}, there is a
representative of the K-theory class with $b=\Id$ and $a=\Id$ near the
boundary, and, since in this case \eqref{Z-i.52} coincides with Fedosov's
formula \eqref{Z-i.8}, we conclude that \eqref{Z-i.52} is the usual Chern
character map.
\end{proof}

The index formula follows similarly. Given a vertical family of fully
elliptic scattering pseudodifferential operators acting on a superbundle
$\bbE \to M,$ it is possible to make a homotopy within fully elliptic
scaterring operators until the symbols are bundle isomorphisms at and near
the boundary. Thus a formula which is homotopy invariant and which
coincides with the usual Atiyah-Singer index formula when the operators are
trivial at the boundary must give the index.

\begin{proposition}\label{Z-i.67} The index in cohomology for a family of
fully elliptic scattering pseudodifferential operators on the fibres of a
fibration is given by the Atiyah-Singer formula essentially as in
\eqref{Z-i.26}, \eqref{Z-i.13}:
\begin{equation}\begin{split}
\ind(A)&=\int \Ch(\bbE,\sigma (A),\beta(A))\wedge \Td(Z)\\
	&=\int_{\scS^*Z}\tCh\lrpar{\sigma (A)}\wedge \Td(Z)
	+\int_{\comp{\scT^*_{\pa Z}Z}}\tCh\lrpar{\beta(A)}\wedge
\iota_{\pa}^*\Td(Z)
\label{Z-i.53}\end{split}\end{equation}
where $\Td(Z)\in\CI(M;\Lambda^{\even})$ is a deRham form representing the
Todd class of the fibres of $\psi:M\longrightarrow Y.$
\end{proposition}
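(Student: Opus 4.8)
The plan is to combine three ingredients: (i) the explicit Chern-character representative of Proposition~\ref{ScatChern}, which already identifies $\Ch(\bbE,\sigma(A),\beta(A))$ as the correct class in $\cH^{\odd}(\bbS W,\pi)$ with $W=T^*(M/B)$; (ii) the push-forward map $\psi_!$ on this relative cohomology, realized concretely by the fibre-integration map of \eqref{FibPush} applied to the fibration $M\to B$ together with the projections $\pi$ from $\bbS W$ and $\comp{W}_{\pa M}$; and (iii) the abstract Atiyah--Singer formula \eqref{rccbif.2}, whose validity in the scattering setting is guaranteed by the identification $\cK_{sc}(M/B)=\Kc(T^*M^\circ/B)$ reviewed in \S\ref{sec:Kthy}. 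Since \eqref{rccbif.2} holds for the same analytic index map \eqref{rccbif.1} in all three calculi, once we know that the relative de~Rham class $\Ch(\bbE,\sigma(A),\beta(A))\wedge\Td(Z)$ represents $\Ch([A])\Td(Z)$ in $\Hc^{\even}(T^*M^\circ/B)$, the conclusion is that its image under $\psi_!$ equals $\Ch(\Ind(A))$; the only remaining point is to unwind what $\psi_!$ does on the explicit complex \eqref{rccbif.7}.

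\medskip
First I would fix a graded connection on $\bbE$ equal to the trivial connection near $\pa M$ (possible after the homotopy described before the statement, which deforms the symbols to bundle isomorphisms near the boundary), and pick a de~Rham representative $\Td(Z)\in\CI(M;\Lambda^{\even})$ of the Todd class of the vertical tangent bundle. By the module structure of Lemma preceding \S\ref{sec:HtpyInv} (the $N=2$ case with $f$ odd), $\Td(Z)\in\CI(M;\Lambda^{\even})$ acts on $\cH^*(\bbS W,\pi)$, and $\Ch(\bbE,\sigma(A),\beta(A))\wedge\Td(Z)$ is again $D$-closed, with components $\Ch(\bbE)\Td(Z)$, $\tCh(\sigma(A))\wedge\pi^*\Td(Z)$, and $\tCh(\beta(A))\wedge\pi^*\iota_\pa^*\Td(Z)$. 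Next I would invoke \eqref{FibPush}: taking $\ell=2\dim(Z)-1$ for the $\bbS W=\scS^*(M/B)$ component and the analogous dimension count for the $\comp{W}_{\pa M}=\comp{\scT^*_{\pa M}(M/B)}$ component, the push-forward map $\psi_!$ on $\cH^{\odd}(\bbS W,\pi)$ is exactly fibre integration of the second and third forms over $\scS^*Z$ and over $\comp{\scT^*_{\pa Z}Z}$ respectively. Because forms pulled back from $M$ (or from $\pa M$) push forward to zero for dimensional reasons, the ambiguity in the representative is killed, just as in the argument after \eqref{Z-i.11}; so the right-hand side of \eqref{Z-i.53} is well-defined and equals $\psi_!\bigl(\Ch([A])\Td(Z)\bigr)$.

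\medskip
Finally, to conclude $\psi_!(\Ch([A])\Td(Z))=\Ch(\Ind(A))$ I would appeal to the Atiyah--Singer families formula \eqref{rccbif.2} in the form already established for $\Kc(T^*M^\circ/B)$: this is legitimate since, by Proposition~\ref{ScatChern}, $\Ch(W,\bbE,\sigma(A),\beta(A))$ is the honest Chern character of the K-class $[A]$ determined by $(W,\bbE,\sigma(A),\beta(A))$, and because (as noted before the statement) any fully elliptic scattering family can be homotoped through fully elliptic families to one whose symbols are trivial near $\pa M$, where \eqref{Z-i.53} reduces to the classical closed-manifold formula \eqref{Z-i.13} for the interior of $M$ — and both sides of \eqref{Z-i.53} are homotopy invariant within fully elliptic families (the left-hand side by definition of the analytic index, the right-hand side by Lemma~\ref{HomotopyInv} applied to the explicit forms). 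Hence the two homotopy-invariant functions of the family agree on a representative and therefore agree in general. The main obstacle I anticipate is bookkeeping rather than conceptual: correctly matching the generalized integration map $\psi_!$ of \eqref{FibPush} with the two separate fibre integrals in \eqref{Z-i.53} — in particular checking that the boundary term $\int_{\comp{\scT^*_{\pa Z}Z}}\tCh(\beta(A))\wedge\iota_\pa^*\Td(Z)$ emerges with the correct sign and degree shift from the second slot of the complex \eqref{rccbif.7}, and that the compatibility condition $\iota_\pa^*\alpha=\iota_\pa^*\beta$ on the corner $\bbS W_{\pa M}$ is respected by the push-forward so that no corner contribution is double-counted or dropped.
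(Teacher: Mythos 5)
Your proposal is correct and takes essentially the same route as the paper: the paper's own proof is exactly the two-line observation that both sides of \eqref{Z-i.53} are homotopy invariant (via Proposition~\ref{ScatChern} and Lemma~\ref{HomotopyInv}) and that they manifestly agree once the family is homotoped to be trivial near the boundary, where \eqref{Z-i.53} collapses to the classical Atiyah--Singer formula. The extra scaffolding you supply (the module structure for wedging with $\Td(Z)$, the explicit $\ell=2\dim Z-1$ fibre integration of \eqref{FibPush}, the well-definedness argument \`a la \eqref{Z-i.11}) is a useful unwinding of what the paper leaves implicit, but it does not constitute a different argument.
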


\begin{proof} Homotopy invariance follows from Proposition \ref{ScatChern}
  and when the family of operators are trivial near the boundary this
  clearly coincides with the Atiyah-Singer families index formula.
\end{proof}

\subsection{Zero families index theorem}\label{sub:ZeroFIT}

The non-commutativity of the boundary symbol, in the normal direction,
makes the derivation of a formula more challenging, so we first consider
the simple case where only the boundary symbol appears.

\subsubsection*{Perturbations of the identity}

Consider a fibration as in \eqref{rccbif.3} where the typical fiber, $X,$
is a manifold with boundary, denoted $Y.$ The restriction of the fibrewise
cotangent bundle to the boundary $T^*_{\pa M}M/B$ has a trivial line
sub-bundle, the conormal bundle, with the quotient being $T^*\pa M/B.$ Thus
as explained in \S\ref{SphereBundle} the compactly supported cohomology of
$U=T^*\pa M/B$ can be realized as the cohomology $\cH^k\lrpar{\bbS U,\pi}$
of the complex
\begin{equation}
\CI(\pa M;\Lambda ^*)\oplus\CI(\bbS U;\Lambda ^{*-1}),\
D=\begin{pmatrix}d&0\\ -\pi^*&-d \end{pmatrix}.
\end{equation}

On the other hand, as explained in \S\ref{sec:Kthy}, the zero
pseudodifferential operators on $X$ can be used to realize the K-theory of 
$Y.$ Indeed
\begin{equation}
\Kc\lrpar{T^*\pa M/B} = \cK_{0,-\infty}\lrpar{M/B},
\label{rccbif.22}\end{equation}
and, since the class of an operator $\Id + A$ (acting as an odd operator on
sections of the superbundle $\bbE$) in the quotient on the right is
determined by its reduced normal operator, any class in
$\Kc\lrpar{T^*\pa M/B}$ can be represented by a map $N \in \CI\lrpar{\bbS
  U,\BPs{-\infty}{[0,1];\bbE} }.$ 

A natural candidate for the Chern character, in view of the previous
sections, would be to choose a graded connection on $\bbE$ and then
consider $\wt \Ch (N).$ However, elements of $\BPs{-\infty}{[0,1];\bbE}$
are generally not trace-class and this corresponding expression is not well
defined unless the trace is renormalized. In this setting, the `$b$-trace'
\begin{equation*}
	\bTr: \BPs{-\infty}{[0,1];\bbE} \longrightarrow \bbC
\end{equation*}
is an extension of the trace that however is not itself a trace. Indeed, instead of vanishing on commutators it satisfies a `trace-defect formula', namely
\begin{equation}
  \bTr\lrpar{\lrspar{A,B}} = \frac1{2\pi i}\int_\bbR \Tr\lrpar{\frac{\pa
      a}{\pa \xi} b} \;d\xi,
\label{BTraceDefect}\end{equation}
where $a$ and $b$ are the indicial operators of $A$ and $B$ respectively (see \cite[Chapter 4]{APSBook}).
Renormalized traces are briefly discussed in Appendix \ref{sec:ResTrace} following \cite{Melrose-Nistor0}.

Given a graded connection $\nabla$ on $\bbE$ and a choice of boundary defining
function $x$ (on which the definition of the $b$-trace depends) consider
the algebra of matrices with entries which are $b$-pseudodifferential
operators on an interval. The $b$-trace will be used to define odd `eta
forms' on the semigroup of $L^2$-invertible order $-\infty$ perturbations
of the identity. These are regularized versions of the odd Chern character
given by \eqref{rccbif.22A}. Taking into account the fact that $\bTr$ is
not a trace set
\begin{multline}
	\eta^\odd_{\bo}\lrpar N = -\frac1{2\pi i} \int_0^1 
	\lrpar{1-t}\bTr\lrpar{ N^{-1}\lrpar{\nabla N}e^{w_N(t)}} 
		+ t\bTr\lrpar{\lrpar{\nabla N}e^{w_N(t)}N^{-1}} \;dt\\
	\Mwhere 
	w_N(t)=(1-t)\omega _+ + tN^{-1}\omega _-N 
		+ \frac1{2\pi i}t(1-t)(N^{-1}\nabla N)^2.
\label{RenEta}\end{multline}

The {\em even} Chern character of the indicial family of $N$ is also needed
here, since these are in essence loops into smoothing operators. The even
Chern character therefore arises from $\tCh$ by transgression
\begin{equation}\label{EvenChern}
\Ch^\even\lrpar a
= \int_\bbR\df i_{\pa_\xi} \tCh_{Y\times\bbR}\lrpar a \; d\xi.
\end{equation}

\begin{lemma}\label{lem:dEta}
If $N$ is a family of a Fredholm zero operators of the form $\Id+A$ where 
$A \in \Psi^{-\infty}_0\lrpar{M/B;\bbE}$ and $a=I_b\lrpar N$ then
\begin{equation}\label{dEta}
d\eta^\odd_{\bo}\lrpar N
	= -\pi^*\Ch\lrpar \bbE  + \nu^{\bar L}_* \tCh\lrpar{I_b(N)}
	= -\pi^*\Ch\lrpar \bbE  + \Ch^{\even}\lrpar a 
\end{equation}
where $\Ch\lrpar \bbE$ is given by \eqref{rccbif.22A}.
\end{lemma}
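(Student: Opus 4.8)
The plan is to imitate, step by step, the Chern--Weil computation in the proof of Proposition~\ref{Z-i.7} (Fedosov's formula), replacing the fibrewise trace $\tr$ by the $b$-trace $\bTr$, and to keep careful account of the correction produced by the trace-defect formula \eqref{BTraceDefect} each time that computation used cyclicity of the trace. The claim is that these defect terms assemble into exactly the transgressed even Chern character $\Ch^\even(a)$ of the indicial family, while the `honest' part of the computation reproduces $-\pi^*\Ch(\bbE)$ just as in Proposition~\ref{Z-i.7}.

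First I would isolate the purely algebraic ingredients, which involve no trace and so are literally those of Proposition~\ref{Z-i.7}: writing $\theta = N^{-1}\nabla N$ --- which is valued in smoothing operators, since $\nabla\Id = 0$ gives $\nabla N = \nabla A$ with $A$ of order $-\infty$ --- the twisted connection $\wt\nabla = \nabla + [t\theta,\cdot]$ has curvature $-2\pi i\,w_N(t)$, so $\nabla e^{w_N(t)} = t\,[e^{w_N(t)},\theta]$ and $\nabla\theta + 2t\theta^2 = -2\pi i\,\tfrac{d}{dt}w_N(t)$. I would also record that a connection form acts on the model interval $[0,1]$ as multiplication by a bundle endomorphism whose indicial operator is constant in the indicial parameter $\xi$; by \eqref{BTraceDefect} it therefore contributes no defect, so $d\,\bTr(P) = \bTr(\nabla P)$ for the form-valued $b$-smoothing families $P$ occurring below. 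One checks at the outset that every $\bTr$, and every $\xi$-integral that will emerge, converges, using $N = \Id + A$ with $A$ smoothing and $I_b(N)(\xi)\to\Id$ as $|\xi|\to\infty$.

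Then I would differentiate $\eta^\odd_{\bo}(N)$ termwise, move $d$ inside each $\bTr$ as $\nabla$, and run the Leibniz expansion exactly as in \eqref{Z-i.58}--\eqref{Z-i.57}. Wherever that computation permuted factors under the trace --- in killing the graded commutator $[\theta e^{w_N(t)},\theta]$, in the identity $\bTr\bigl(\tfrac{d}{dt}e^{w_N(t)}\bigr) = \bTr\bigl(\tfrac{d}{dt}w_N(t)\cdot e^{w_N(t)}\bigr)$, and in the endpoint terms of the $t$-integral --- I would substitute the trace defect from \eqref{BTraceDefect} for the vanishing commutator. The symmetrised shape of \eqref{RenEta}, with weights $1-t$ and $t$ on the two orderings $N^{-1}(\nabla N)e^{w_N(t)}$ and $(\nabla N)e^{w_N(t)}N^{-1}$, is exactly what makes these defects add with matching signs instead of contaminating the honest part, which then collapses, as in Proposition~\ref{Z-i.7}, to $-\pi^*\Ch(\bbE)$. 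Applying the indicial homomorphism, which commutes with $\nabla$ and with forming exponentials --- so $I(\theta) = I_b(N)^{-1}\nabla I_b(N)$ and $I(e^{w_N(t)}) = e^{w_{I_b(N)}(t)}$ --- every defect acquires the shape $\tfrac1{2\pi i}\int_\bbR \tr\bigl(\pa_\xi(\cdots)\,e^{w_{I_b(N)}(t)}\bigr)\,d\xi$, and after the $t$-integration these telescope into $\int_\bbR \df i_{\pa_\xi}\tCh_{Y\times\bbR}\lrpar{I_b(N)}\,d\xi$, which is $\Ch^\even(a)$ by the definition \eqref{EvenChern}. The remaining equality $\nu^{\bar L}_*\tCh(I_b(N)) = \Ch^\even(a)$ is only the observation, already used in \S\ref{rel-bound-zero} and after Lemma~\ref{PrepZero}, that $\nu^{\bar L}_*$ is fibre integration over $\bar L$ and the $\bar L$-fibre is the compactified indicial line, so $\nu^{\bar L}_*$ agrees with $\int_\bbR \df i_{\pa_\xi}(\cdot)\,d\xi$.

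The main obstacle is the bookkeeping of the trace defects: one must track every use of cyclicity in the Fedosov computation --- including the one hidden in $\tfrac{d}{dt}\bTr(e^{w_N(t)})$, and the verification that connection forms, having $\xi$-independent indicial operators, produce no defect so that $d\,\bTr = \bTr\nabla$ --- keep the $\bbZ_2$ signs straight ($\theta$ and $\nabla$ odd, the curvatures even), verify convergence of each $\bTr$ and of each emerging $\xi$-integral (this rests on $I_b(N) - \Id$ being smoothing and rapidly decaying in $\xi$, so that the boundary terms of $\int_\bbR \pa_\xi(\cdots)\,d\xi$ at $\pm\infty$ vanish), and finally confirm that the accumulated defects reassemble with precisely the coefficient appearing in \eqref{EvenChern}. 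Granting the machinery of the $b$-trace and its defect formula (Appendix~\ref{sec:ResTrace}), the rest is a careful but routine calculation.
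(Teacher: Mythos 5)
Your proposal is correct and follows essentially the same route as the paper's proof: differentiate $\eta^\odd_{\bo}(N)$ termwise, pass $d$ inside the $b$-trace as $\nabla$, run the Fedosov Leibniz/integration-by-parts-in-$t$ computation to extract $-\pi^*\Ch(\bbE)$ from the endpoint terms, and convert each surviving commutator into a $\xi$-integral via the $b$-trace defect formula \eqref{BTraceDefect}, identifying the result with $\int_\bbR\df i_{\pa_\xi}\tCh(a)\,d\xi=\Ch^\even(a)$. The only point on which you go slightly beyond the paper's exposition is in explicitly justifying $d\,\bTr=\bTr\,\nabla$ by observing that the connection form has $\xi$-independent indicial operator so produces no defect — a correct and worthwhile remark that the paper leaves implicit.
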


\begin{proof} Set
\begin{equation*}
	\theta = N^{-1}\nabla N,\ \theta_a = a^{-1}\nabla a\Mand
	\theta_\xi = a^{-1}\frac{\pa a}{\pa \xi},
\end{equation*}
and recall, as in Proposition~\ref{Z-i.7}, that
\begin{equation*}
\pa_te^{w\lrpar t} 
=-\frac1{2\pi i}\lrpar{\nabla\lrpar{\theta e^{w\lrpar t}} 
	+ t\theta e^{w\lrpar t}\theta + t\theta^2 e^{w\lrpar t} }.
\end{equation*}

Then 
\begin{multline*}
d\eta\lrpar N
= d\lrspar{
 \frac i{2\pi}\int_0^1 
\bTr\lrpar{ \lrpar{1-t} \theta e^{w_N(t)}
	+ t N\theta e^{w_N(t)}N^{-1}} \;dt }\\
=\frac i{2\pi}\int_0^1 
	\bTr\lrpar{\nabla\lrpar{ \lrpar{1-t} \theta e^{w_N(t)}
		+ t N\theta e^{w_N(t)}N^{-1}} }\;dt \\
	=\int_0^1 
\bTr\lrpar{ t N \pa_te^{w_N(t)} N^{-1} + (1-t) \pa_t e^{w_N(t)}} \;dt
\\
	+\frac i{2\pi}\int_0^1 
\bTr\lrpar{ t\lrpar{1-t}
\lrspar{N\lrspar{\theta e^{w_N(t)}, \theta}, N^{-1}} }\;dt
\\
=\int_0^1 \pa_t \bTr\lrpar{ t N e^{w_N(t)} N^{-1} + (1-t) e^{w_N(t)}} \;dt
\\
+ \int_0^1\bTr\lrpar{\lrspar{Ne^{w_N(t)},N^{-1}}} \;dt
+\frac i{2\pi}\int_0^1\bTr
\lrpar{ t\lrpar{1-t} \lrspar{N\lrspar{\theta e^{w_N(t)}, \theta}, N^{-1}} }\;dt.
\end{multline*}
The integral of the $t$-derivative reduces to
\begin{equation*}
	\bTr\lrpar{ N e^{w_N(1)} N^{-1}} - \bTr\lrpar{e^{w_N(0)}}
	= -\pi^* \Ch\lrpar \bbE
\end{equation*}
and the other terms can be evaluated using the trace-defect formula giving
\begin{equation*}
\int_0^1 \frac i{2\pi} \int_{\bbR} \tr\lrpar{ \theta_\xi e^{w_a(t)}} \;d\xi \;dt
+\frac i{2\pi}\int_0^1 \frac i{2\pi} t\lrpar{1-t} \int_{\bbR} 
\tr\lrpar{ -\theta_\xi \lrspar{\theta_a e^{w_a(t)}, \theta_a} } \;d\xi \;dt.
\end{equation*}
Comparing this with 
\begin{equation*}
\begin{split}
\df i_{\pa_\xi} \tCh&\lrpar a = 
\frac i{2\pi}\df i_{\pa_\xi} \int_0^1 \tr\lrpar{ \theta_a e^{w_a\lrpar t} } \;dt
\\
&=\frac i{2\pi}\int_0^1
\tr\lrpar{ \theta_\xi e^{w_a\lrpar t}- \frac i{2\pi}\theta_a e^{w_a\lrpar t}
\lrpar{t\pa_\xi\lrpar{ \theta_a} - t\nabla
\lrpar{\theta_\xi} -t^2\lrspar{\theta_a,\theta_\xi}}} \;dt
\\
&=\frac i{2\pi}\int_0^1
\tr\lrpar{ \theta_\xi e^{w_a\lrpar t}-\frac i{2\pi}\theta_a e^{w_a\lrpar t}
\lrpar{t \lrpar{1-t} \lrspar{\theta_a,\theta_\xi} }} \;dt
\end{split}\end{equation*}
yields \eqref{dEta}.
\end{proof}

A fundamental property of the reduced normal operator is that its
$b$-indicial symbol only depends on the base $\pa M/B$ and not on the
cotangent variables, so $\Ch^\even(I_b(N))$ can be regarded as a form on $\pa
M.$ Thus the result of the lemma shows that the forms
\begin{equation*}
\Ch\lrpar\bbE - \Ch^\even\lrpar a \in \CI\lrpar{\pa M;\Lambda^\even},\
\eta\lrpar N \in \CI\lrpar{\bbS U;\Lambda^\odd}
\end{equation*}
define a class in $\cH^\odd\lrpar{\bbS U, \pi}.$

\begin{proposition}\label{prop:OddZeroChern} The (odd) relative Chern
character for $\Kco(\ZT^*_{\pa M}M/B)$ of the (zero) cotangent bundle of a compact
manifold with boundary may be realized in terms of the reduced normal
operators by
\begin{equation}
\Ch^\odd(P)=\lrpar{\Ch\lrpar\bbE - \Ch^\even[I(\RN(P))],\eta_{\bo}( \RN(P) )}, 
\label{27.9.06.8}\end{equation}
as a class in $\cH^{\odd}(S^*\pa M/B,\pi)$ for $P \in \Id+\Psi_0^{-\infty}(M/B; \bbE)$ Fredholm on $L^2_0(M/B)$.
\end{proposition}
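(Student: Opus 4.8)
The plan is to mimic the proof of Proposition~\ref{ScatChern}: first show that the pair in \eqref{27.9.06.8} is $D$-closed, hence defines a class in $\cH^{\odd}(S^{*}\pa M/B,\pi)$; then show that this class depends only on the class of $P$ in $\Kco(\ZT^{*}_{\pa M}M/B)=\cK_{0,-\infty}(M/B)$; and finally identify it with the Chern character by reducing, via the realization of this $K$-group recalled in \S\ref{sec:Kthy}, to a model for which \eqref{27.9.06.8} becomes Fedosov's representative of Proposition~\ref{Z-i.7}. For the closedness there is essentially nothing to add to the discussion preceding the statement: Lemma~\ref{lem:dEta} gives $d\eta^{\odd}_{\bo}(\RN(P))=-\pi^{*}\Ch(\bbE)+\Ch^{\even}(I_{b}(\RN(P)))$, which is exactly the $D$-closedness condition in the complex of \S\ref{SphereBundle} for the bundle $U=T^{*}\pa M/B$, once one uses that $\Ch^{\even}(I_{b}(\RN(P)))$ is pulled back from $\pa M$ (so it equals its own pull-back to $\bbS U$) and, differentiating that same identity and using that $\pi^{*}$ is injective on forms, that the first component is closed already on $\pa M$. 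Invariance under stabilization and bundle isomorphism is inherited directly from the forms $\Ch(\bbE)$, $\tCh$ and $\eta_{\bo}$.

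The substance of the argument is to show that the cohomology class is unchanged under (i) a homotopy of $P$ through Fredholm perturbations of the identity, (ii) a change of graded connection $\nabla$ on $\bbE$, and (iii) a change of the boundary defining function $x$, on which $\bTr$ and hence $\eta_{\bo}$ depend. Each is a one-parameter deformation, and I would treat all three by the transgression construction of \S\ref{sec:HtpyInv}: assemble the relevant eta forms into a single object over $\bbS U\times[0,1]_{r}$, compute its total differential by the Leibniz-type manipulation already carried out in the proof of Lemma~\ref{lem:dEta}, and read off from Lemma~\ref{HomotopyInv} that the two endpoints are $D$-cohomologous. The delicate point --- and where I expect the real work to lie --- is that $\bTr$ is not a trace, so each such computation produces extra terms governed by the trace-defect formula \eqref{BTraceDefect}; one must verify that, once the parameter is integrated out, these organise into the $D$-coboundary of an explicit form built from the indicial families, exactly as the $\Ch^{\even}$ term was produced in Lemma~\ref{lem:dEta}. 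Case (iii) in addition calls for the variation formula of $\bTr$ under change of defining function, and one checks that the term it contributes differs from the others only by such a coboundary.

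It then remains to identify the resulting homomorphism $\Kco(\ZT^{*}_{\pa M}M/B)\to\cH^{\odd}(\bbS U,\pi)$ with the Chern character. Using \eqref{rccbif.22} and the explicit description of $\cK_{0,-\infty}(M/B)\cong\Kc(T^{*}\pa M/B)$ from \cite{Albin-Melrose1} (together with the contractibility of the semigroup of invertible order $-\infty$ perturbations of the identity), I would deform $\RN(P)$ to a model reduced normal family conjugate to its $b$-indicial family. For such a model the two summands in \eqref{RenEta} coincide, the regularized integral over the interval built into the $b$-trace becomes the spectral integral appearing in $\Ch^{\even}$, and the pair \eqref{27.9.06.8} reduces --- after the conormal push-forward of Lemma~\ref{rccbif.18}, which realizes the passage from the $\comp{T^{*}\pa M/B}$-model of $\Kco(\ZT^{*}_{\pa M}M/B)$ used in \S\ref{sec:Kthy} to its $S^{*}\pa M/B$-model --- to Fedosov's representative $(\Ch(\bbE'),\tCh(a'))$ of Proposition~\ref{Z-i.7} for a symbol $a'$ on $S^{*}\pa M/B$ representing $[P]$. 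Proposition~\ref{Z-i.7} then identifies this class as $\Ch([P])$, and the invariance established in the previous step gives the stated formula for the original $P$. The main obstacle throughout is thus the non-traciality of $\bTr$: keeping track of the trace-defect corrections so that every deformation is manifestly $D$-exact is the crux of the argument, and it is precisely the channel through which the even Chern character of the $b$-indicial family enters the relative complex.
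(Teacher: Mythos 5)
Your basic plan---show $D$-closedness, show homotopy/stabilization/connection independence, then deform to a model for which the formula becomes Fedosov's---is the template of Proposition~\ref{ScatChern}, and the paper explicitly flags, in the remark immediately following the statement, that this template does \emph{not} work here: the isomorphism \eqref{rccbif.22} does not allow one to represent a K-theory class by an operator that is trivial at infinity. For a perturbation of the identity in the zero calculus the reduced normal family \emph{is} the entire $K$-theoretic content, so any deformation of $P$ through Fredholm perturbations of the identity that trivializes $\RN(P)$ also trivializes the class. Your proposed fix --- ``deform $\RN(P)$ to a model reduced normal family conjugate to its $b$-indicial family'' --- is not a well-defined operation on reduced normal families ($\RN(P)$ is an operator-valued family on an interval, while $I_b(\RN(P))$ is a family of indicial operators in a single representation; there is no conjugation that takes the former to the latter inside the same target space), and the subsidiary claim that for such a model the two summands in \eqref{RenEta} coincide and the $b$-trace collapses to the spectral integral in $\Ch^{\even}$ is not substantiated.

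The paper's proof is structurally different. Rather than reducing to a model, it follows the actual isomorphism $\cK_{-\infty,0}(M/B)\cong\Kc(T^*\pa M/B)$ step by step: the contractibility of the invertible $b$-operators on the interval gives a curve $A(t)$ from $\RN(P)$ to the identity, living inside the $b$-calculus rather than inside reduced normal operators of zero families; its $b$-indicial family fixes the class $\alpha\in\Kc(T^*\pa M/B)$; and $\Ch(\alpha)$ is then traced back to $\cH^\odd(\bbS U,\pi)$ via the map $\Phi$ of \eqref{MapPhi}, producing the explicit representative $\lrpar{\Ch(A_0),\,-\int_0^\infty\Ch'(A_t)\,dt}$, which is compared directly with $\lrpar{\Ch(\bbE)-\Ch^{\even}[I(\RN(P))],\eta_{\bo}(\RN(P))}$. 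Note also that your appeal to Lemma~\ref{rccbif.18} (the $\bar L$ push-forward) is misplaced: that lemma enters the general zero-operator case via $\cH^*(\bbS U,\bar L)$, whereas the present proof works entirely in $\cH^\odd(\bbS U,\pi)$ and invokes only the map $\Phi$ from \S\ref{SphereBundle}. The $D$-closedness via Lemma~\ref{lem:dEta} and the invariance bookkeeping in your first two paragraphs are fine, but the identification step --- the part that actually establishes that the cocycle is the Chern character --- is missing the key idea, and the mechanism you substitute for it does not exist.
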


{\bf Remark.} The proof of this proposition is more complicated than that
of Proposition \ref{ScatChern} since the isomorphism \eqref{rccbif.22} does
not allow us to represent a K-theory class by an operator that is trivial
at infinity (see \S\ref{sec:Kthy}).

\begin{proof} Since $\Ch^\odd(P)$ defines a class in $\cH^{\odd}(S^*\pa M/B,\pi)$
it remains only to show that this is the relative Chern character. To this end,
we will show that the diagram
\begin{equation}\label{ChernDiag}
\xymatrix{\cK_{-\infty,0}^0\lrpar{M/B} \ar[r]_\cong \ar[d]^{\Ch} 
& \Kc^0\lrpar{T^*\pa M/B} \ar[d]^{\Ch}
\\
\cH^\odd\lrpar{S^*\pa M/B,\pi} \ar[r]_\cong & H_c^\even\lrpar{T^*\pa M/B} }
\end{equation}
commutes. It is convenient to represent the compactly supported $K$-theory of $T^*\pa M/B$ using a classifying group
\begin{equation*}
	\Kc^0\lrpar{T^*\pa M/B}
	= \lim_{\to} \lrspar{X; C^{\infty}\lrpar{\lrpar{\mathbb{S}^1, 1}; \lrpar{\mathrm{GL}\lrpar{N}, \Id}}} 
	= \lrspar{X; G^{-\infty} },
\end{equation*}
where $G^{-\infty}$  is the group of invertible `suspended' operators on a closed manifold that differ from the identity by a smoothing operator (see \cite{Melrose-Rochon0}).
 
The isomorphism at the top of \eqref{ChernDiag} between the group of stable
equivalence classes of invertible reduced normal families of perturbations
of the identity and a standard presentation of $\Kco(\ZT^*_{\pa M}M/B)$
comes from the contractibility of the underlying semigroup of invertible
operators on the interval. Namely, this allows the reduced normal family to
be connected (after stabilization) to the identity through a curve of maps
$A(t)$ from $S^*\pa M/B$ into the invertible b-operators of the form
$\Id+A,$ $A$ of order $-\infty$ and non-trivial only at the one end of the
interval.  In fact we can easily arrange for the family $A(t)$ to be
constant near the two endpoint of the parameter interval.

The indicial family of this curve initially only depends on the base variables $\pa M/B$ and
the indicial parameter, so can be interpreted as fixing a
homotopy class of smooth maps
\begin{equation*}
\alpha:\ZT^*\pa M/B\longrightarrow G^{-\infty}
\end{equation*}
and hence an element of $\Kc^0(\ZT^*\pa M/B).$
 
The cohomology class of the resulting Chern character defines, via the map
\eqref{MapPhi}, an element of $\cH^\odd\lrpar{S^*\pa M/B,\pi}$.  In this
case, this map consists of passing from $\Ch\lrpar\alpha$ back to
$\Ch\lrpar{A(t)}$, decomposing this as
\begin{equation*}
\Ch\lrpar{A(t)} = \Ch\lrpar{A_t} + \Ch'\lrpar{A_t} \wedge dt
\end{equation*}
with both $\Ch\lrpar{A_t}$ and $\Ch'\lrpar{A_t}$ independent of $dt$, and
then keeping
\begin{equation}
\lrspar{\lrpar{\Ch\lrpar{A_0},-\int_0^\infty \Ch'\lrpar{A_t}\;dt}}\in
  \cH^\odd\lrpar{S^*\pa X,\pi}.
\label{ChImage}\end{equation}
The normalization of the Chern character $\Ch^\even(a)$ is fixed by the
requirement that it \emph{be} the usual multiplicative map
\begin{equation*}
\Ch:K^0(\pa M/B)\longrightarrow H^{\even}(\pa M/B).
\end{equation*}
Hence the first term in \eqref{ChImage} coincides with
$\Ch^\even\lrpar a$ defined in \eqref{EvenChern}, while the second term
$\omega$ satisfies $d\omega = -\pi^*\Ch^\even\lrpar a$. It follows that
\eqref{ChImage} and \eqref{27.9.06.8} define the same class in
$\cH^\odd\lrpar{S^*\pa X,\pi}.$
\end{proof}

\subsubsection*{General Fredholm zero operators}

Again consider a fibration \eqref{rccbif.3}, where the fibers $X$ are
manifolds with boundary. The compactly supported $K$-theory of $T^*M/B$
relative to the boundary can be represented by stable homotopy classes of
Fredholm operators in the zero calculus. Thus each class is represented by
a superbundle $\bbE$ and a pair of maps, 
\begin{equation*}
\lrpar{\sigma, \cN} 
\in \CI\lrpar{S^*M/B;\hom\bbE} 
\oplus \CI\lrpar{S^*\pa M/B; \Psi^0_{b,c}\lrpar{\cI;\bbE}}
\end{equation*}
as in \eqref{ZeroPairs}. The vector bundle $W=T^*M/B$ has a trivial line
sub-bundle at the boundary $L$ (the normal bundle to the boundary) so
$H^*_c\lrpar{W^\circ}$ can be realized using forms on $X,$ $\bbS W,$ $\bar 
L,$ and $\bbS U$ (with $U=W\rest{\pa M}/L).$ This will allow us to write a
formula for the Chern character involving only $\bbE,$ $\sigma,$ $\cN$ and
a choice of graded connection on $\bbE.$ To simplify the discussion the map
$\sigma$ restricted to the inward pointing end of $L$ will be used to
identify $E^+$ and $E^-$ over the boundary. Then the connections can be
chosen to be compatible with this identification and the result is that we
can arrange
\begin{quote}
the Chern character of $\bbE$ restricted to the boundary is identically
  zero.
\end{quote}

The main novelty in the construction of the explicit Chern character for
general Fredholm families in the zero calculus involves the eta term coming
from the reduced normal operator. This is now a doubly-regularized form, in
the sense that the divergence at the boundary needs to be removed as before
but there is also divergence coming from the fact that these operators
are now not of trace class even locally in the interior of the interval.
Indeed, an operator in the $b,c$ calculus on the interval will be of trace
class if and only if it has order less than $-1$ and its kernel vanishes at
the boundary. This regularization is discussed in
Appendix~\ref{sec:ResTrace} (extending \cite{Melrose-Nistor0}) using
complex powers of an admissible operator $Q$ in the $b,c$ calculus and a
(total) boundary defining function $x.$

As discussed in Appendix~\ref{NormalOp}, the choice of a metric at the
boundary trivializes the interval bundle over the boundary on which the
reduced normal operator acts. In particular the reduced normal operator of
an element of order $0$ in the zero calculus then becomes a well-defined
smooth map $\cN:S^*\pa
M/B\longrightarrow\Psi_{\fbsc}^0\lrpar{\cI;\bbE\rest{\pa M}}.$ The total
symbol of $Q$ is a function on the cotangent bundle of the interval
$T^*\cI=\cI_r\times\bbR_\omega$ and it depends only on the cotangent
variable $\omega$, not on $r.$

Generalizing \eqref{RenEta}, for any element of the zero calculus with
invertible reduced normal family (on $L^2)$, set
\begin{multline}\label{EtaBC}
 \eta_{\fbsc}^\odd\lrpar\cN \\
 =\frac i{2\pi} \int_0^1 
 \lrpar{1-t}\RTr_{\fbsc}\lrpar{\cN^{-1}\lrpar{\nabla\cN}e^{\omega_{\cN}\lrpar t}}
 +t\RTr_{\fbsc}\lrpar{\lrpar{\nabla\cN}e^{\omega_{\cN}\lrpar t}\cN^{-1}} \;dt
\end{multline}
where the inverse takes values in the large calculus. 

The differential of \eqref{EtaBC} will involve the even Chern characters of the indicial families at the $b$ and $cusp$ ends. These are defined by 
\begin{equation*}
\Ch^\even\lrpar a
= \int_\bbR\df i_{\pa_\xi} \tCh_{Y\times\bbR}\lrpar a \; d\xi
\end{equation*}
with $a=I_b\lrpar\cN$ or $a=I_c\lrpar\cN.$ Initially one might expect an
integrability issue because both of these are homogeneous of degree
zero. However at least one of the factors in the integrand will involve
$\frac{\pa a}{\pa \xi},$ and since in either case $a$ has an expansion
\begin{equation*}
	a\sim a_0 + a_1\xi^{-1} + a_2\xi^{-2} + \ldots
\end{equation*}
as $\xi\to\infty,$ the integrand must vanish to second order at infinity
and hence is integrable.

\begin{lemma}\label{Z-i.81} 
For an element $A\in\ZPs0{X;\bbE}$ with reduced normal operator
$\cN:X\longrightarrow\Psi_{\fbsc}^0\lrpar{\cI,\bbE\rest{\pa X}}$ taking
values in the $L^2$-invertible operators,
\begin{equation}
\begin{gathered}
d\eta_{\fbsc}^\odd\lrpar\cN
=-\pi^*i^*_{\pa X}\Ch\lrpar\bbE +
\Ch^\even\lrpar{I_b\lrpar\cN}
-\Ch^\even\lrpar{I_c\lrpar\cN} \\
\Ch^\even\lrpar{I_b\lrpar\cN}
-\hat\nu^L_* i_{\pa X}^* \wt\Ch\lrpar{\sigma\lrpar A}
\end{gathered}
\label{Z-i.80}\end{equation}
\end{lemma}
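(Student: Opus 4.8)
The plan is to repeat the Chern--Weil computation of Proposition~\ref{Z-i.7} and Lemma~\ref{lem:dEta} once more, now with the renormalized trace $\RTr_{\fbsc}$ of Appendix~\ref{sec:ResTrace} in place of the $b$-trace. The one genuinely new ingredient is that $\RTr_{\fbsc}$, being a renormalized trace on the $b,c$ calculus on the interval $\cI$, fails to be a trace by a \emph{two}-term defect formula: one boundary contribution from the $b$-end, expressed through the $b$-indicial operators, and one from the cusp end, expressed through the $c$-indicial operators, whereas $\bTr$ had the single defect \eqref{BTraceDefect}. This is exactly what forces the two terms $\Ch^\even(I_b(\cN))$ and $\Ch^\even(I_c(\cN))$ in \eqref{Z-i.80}, with opposite signs coming from the two orientations of the ends of $\cI$.

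For the first equality, set $\theta=\cN^{-1}\nabla\cN$ and recall, as in the proof of Proposition~\ref{Z-i.7}, the identity $\pa_t e^{\omega_\cN(t)} = -\frac1{2\pi i}\bigl(\nabla(\theta e^{\omega_\cN(t)}) + t\theta e^{\omega_\cN(t)}\theta + t\theta^2 e^{\omega_\cN(t)}\bigr)$. Differentiating $\eta^\odd_{\fbsc}(\cN)$ under the integral sign, commuting $\nabla$ with $\RTr_{\fbsc}$, and integrating by parts in $t$ exactly as in the proof of Lemma~\ref{lem:dEta}, the $t$-boundary term collapses to $\RTr_{\fbsc}\bigl(\cN e^{\omega_\cN(1)}\cN^{-1}\bigr)-\RTr_{\fbsc}\bigl(e^{\omega_\cN(0)}\bigr)=\RTr_{\fbsc}(e^{\omega_-}-e^{\omega_+})$, which gives $-\pi^*i^*_{\pa X}\Ch(\bbE)$ because on the $\cI$-constant part $\RTr_{\fbsc}$ reduces to the fibre matrix trace with the normalization fixed in Appendix~\ref{sec:ResTrace} (and anyway $\Ch(\bbE)$ has been arranged to vanish over $\pa X$). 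Every remaining term is a renormalized trace of a graded commutator; applying the two-term defect formula and matching each boundary piece against the transgression formula \eqref{EvenChern} for $\Ch^\even$ --- precisely the comparison carried out in the last display of the proof of Lemma~\ref{lem:dEta} --- turns these into $+\Ch^\even(I_b(\cN))-\Ch^\even(I_c(\cN))$. The transgression integrals converge for the reason noted before the statement: the asymptotic expansions $a\sim a_0+a_1\xi^{-1}+\cdots$ of $a=I_b(\cN)$ and $a=I_c(\cN)$ make the relevant integrands vanish to second order as $\xi\to\infty$.

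For the second equality, I would use the explicit description of the cusp indicial family from \eqref{ZeroPairs}, namely $I_c(\cN(y,\eta))(\xi)=\sigma(0,y,\xi/\ang\xi,\eta/\ang\xi)$. The map $(y,\eta,\xi)\mapsto(0,y,\xi/\ang\xi,\eta/\ang\xi)$ identifies $\bbS U\times\comp L$ with $\bbS W\rest{\pa X}$, sending the two ends of $\comp L$ to $L^\pm$, and under it $\tCh(I_c(\cN))$ is the pull-back of $i^*_{\pa X}\tCh(\sigma(A))$; consequently the transgression integral defining $\Ch^\even(I_c(\cN))$ is literally the fibre-integral of $i^*_{\pa X}\tCh(\sigma(A))$ along $L$. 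The difference between this naive fibre-integral $\nu^{\bbS W}_*$ and the corrected push-forward $\hat\nu^L_*$ is governed by Lemma~\ref{PrepZero}, and substituting the relation $d_{\bbS W\rest{\pa X}}\bigl(i^*_{\pa X}\tCh(\sigma(A))\bigr)=-\pi^*i^*_{\pa X}\Ch(\bbE)$ from Proposition~\ref{Z-i.7} converts the boundary terms of Lemma~\ref{PrepZero} into the single term $\pi^*i^*_{\pa X}\Ch(\bbE)$; this yields $\hat\nu^L_*\,i^*_{\pa X}\wt\Ch(\sigma(A))=\pi^*i^*_{\pa X}\Ch(\bbE)+\Ch^\even(I_c(\cN))$, which is the second equality.

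I expect the main obstacle to be the bookkeeping around $\RTr_{\fbsc}$: checking that the $t$-boundary term $\RTr_{\fbsc}(e^{\omega_-}-e^{\omega_+})$ really reduces to $-\pi^*i^*_{\pa X}\Ch(\bbE)$ --- that is, that the renormalization constant attached to the ``trace of the identity'' piece (which depends on the boundary defining function $x$ and on the admissible operator $Q$) is consistent with the rest of the normalization --- and that the $b$-end and cusp-end defect contributions carry the signs asserted in \eqref{Z-i.80}. Once the precise structure and trace-defect formula of $\RTr_{\fbsc}$ supplied by Appendix~\ref{sec:ResTrace} are in hand, the rest is a mechanical repetition of the computations in Proposition~\ref{Z-i.7} and Lemma~\ref{lem:dEta}.
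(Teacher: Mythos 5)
Your proposal follows the same overall strategy as the paper's proof (repeat the Chern--Weil computation of Lemma~\ref{lem:dEta} with $\RTr_{\fbsc}$ in place of $\bTr$, then apply the trace-defect formula), but there is one concrete gap in your handling of the trace-defect formula.

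You describe the failure of $\RTr_{\fbsc}$ to be a trace as a ``two-term defect formula: one boundary contribution from the $b$-end \ldots and one from the cusp end.'' That is not what the defect formula from Appendix~\ref{sec:ResTrace} says. The formula \eqref{TrDefect} has two terms of a genuinely different character:
\begin{equation*}
\RTr\lrpar{\lrspar{A,B}} =
-\Trsig{\tfrac{BD_Q\lrpar A + D_Q\lrpar AB}2}
+\Trpa{\tfrac{BD_x\lrpar A + D_x\lrpar AB}2},
\end{equation*}
a \emph{symbolic} residue term (the $\Trsig$ piece, built from the derivation $D_Q=[\log Q,\cdot]$) and a \emph{boundary} residue term (the $\Trpa$ piece, built from $D_x=[\log x,\cdot]$). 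Only the latter decomposes into $b$-end and cusp-end contributions. Your argument never accounts for the $\Trsig$ piece, so as written it would leave a spurious $\tTrsig$ contribution in \eqref{Z-i.80}. The paper kills this term by a specific observation: because the full symbol of $Q^\tau$ and the principal symbol of $\cN$ depend only on the cotangent variable and not on the interval variable $r$, the full-symbol expansion of $[A,Q(z)]$ shows that it has order $-z-2$, which forces the symbolic residue to vanish. This is exactly the kind of detail that was flagged before the lemma (``the total symbol of $Q$ \ldots depends only on the cotangent variable $\omega$, not on $r$''), and it must appear in the proof; without it the first equality of \eqref{Z-i.80} is not established.

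A secondary, smaller point: in your argument for the second equality you invoke a ``difference between the naive fibre-integral $\nu^{\bbS W}_*$ and the corrected push-forward $\hat\nu^L_*$'' and appeal to Lemma~\ref{PrepZero} to bridge them. There is no such gap to bridge --- $\hat\nu^L_*$ \emph{is} the push-forward defined via the blow-up map $\beta$ of \eqref{rccbif.21}, the same map through which $\Ch^\even$ is defined by \eqref{EvenChern}. Once one observes that $I_c(\cN(y,\eta))(\xi)={}^0\sigma(A)(0,y,\xi,\eta)$ (equivalently, the formula you quote from \eqref{ZeroPairs}) so that $\tCh(I_c(\cN))$ is exactly the pull-back of $i^*_{\pa X}\wt\Ch(\sigma(A))$ under $\beta$, the transgression integral defining $\Ch^\even(I_c(\cN))$ is $\hat\nu^L_* i^*_{\pa X}\wt\Ch(\sigma(A))$ on the nose, and the second line of \eqref{Z-i.80} follows from the first together with the arranged vanishing of $i^*_{\pa X}\Ch(\bbE)$. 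Your detour through Lemma~\ref{PrepZero} is unnecessary (Lemma~\ref{PrepZero} concerns the failure of $\nu^{\bbS W}_*$ to commute with $d$, not a discrepancy between two push-forward maps), though it does not lead you to a wrong answer.
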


\begin{proof} 
The computation in the proof of Lemma~\ref{lem:dEta} shows that
\begin{multline*}
d\eta_{\fbsc}^\odd\lrpar\cN
=-\pi^*\Ch\lrpar\bbE 
+ \int_0^1\RTr\lrpar{\lrspar{Ne^{w_N(t)},N^{-1}}} \;dt
\\
+\frac i{2\pi}\int_0^1 
\RTr\lrpar{ t\lrpar{1-t} \lrspar{N\lrspar{\theta e^{w_N(t)},\theta}, N^{-1}}}\;dt,
\end{multline*}
and we need only apply the trace-defect formula for $\RTr$ explained in
Appendix~\ref{sec:ResTrace},
\begin{equation}
	\RTr\lrpar{\lrspar{A,B}} = 
	- \Trsig{\frac{BD_Q\lrpar A + D_Q\lrpar AB}2} 
	+\Trpa{\frac{BD_x\lrpar A + D_x\lrpar AB}2}.
\label{bcTrDefect0}\end{equation}
Notice that the principal symbol of $\cN$ and the full symbol of $Q^\tau$ are independent of the interval variable $r\in \cI$ and only depend on the cotangent variable, so the formula for the full symbol of the commutator $\lrspar{A,Q\lrpar z}$ shows that this operator is of order $-z-2$ and hence the first term in \eqref{bcTrDefect0} vanishes.
The second term can be written in terms of the indicial families at the $b$ and $c$ ends much like \eqref{BTraceDefect}. Thus, at the $b$-end we get
\begin{equation*}
\frac i{2\pi}\int_0^1 
\lrspar{
\sideset{^R}{}\int_{\bbR} \tr\lrpar{ \theta_\xi e^{w_a(t)}} \;d\xi 
+
\frac i{2\pi} t\lrpar{1-t} \sideset{^R}{}\int_{\bbR} 
	\tr\lrpar{ -\theta_\xi \lrspar{\theta_a e^{w_a(t)}, \theta_a} } \;d\xi } \;dt , 
\end{equation*}
which, as in the proof of Lemma~\ref{lem:dEta}, equals $\int_{\bbR} \df i_{\pa_\xi} \tCh\lrpar{I_b\lrpar\cN}$ and similarly at the cusp end.
This proves the first line in \eqref{Z-i.80}. The second line follows since, on the one hand, we required that $i^*_{\pa X}\Ch\lrpar\bbE$ vanish, and, on the other, the cusp indicial family is given by
\begin{equation*}
	I_c\lrpar{\cN\lrpar{y,\eta}}\lrpar\xi = {}^0\sigma\lrpar A\lrpar{0,y,\xi,\eta}.
\end{equation*}
\end{proof}

With this lemma we have all of the ingredients for representing the Chern character of a relative K-theory class in terms of a zero pseudodifferential operator using the description of the relative cohomology from \S\ref{Zero-ind-coh}. Recall that the chain complex is
\begin{gather*}
	\cZ_k 
	= \CI\lrpar{X;\Lambda^k}
	\oplus 
	\lrpar{ \CI\lrpar{\bbS W;\Lambda^{k-1}}
	\oplus \CI\lrpar{\bar L;\Lambda^{k-1}} }
	\oplus \CI\lrpar{\bbS U;\Lambda^{k-3}} \\
	D = \begin{pmatrix} d & & & \\ -\pi^*_{\bbS W} & -d & & \\ 
		-\pi^*_{\bar L}i^*_\pa & & -d & \\ & -\nu^{\bbS W}_* & \pi^*_{\bbS U}\nu^L_* & d \end{pmatrix}
\end{gather*}
(with a compatibility condition at $L_{\pm}$).

\begin{theorem}\label{thm:ZeroFamilies} $ $

a)
The element of $\cC_{\even}$,
\begin{equation} \label{ZeroChern}
	\Ch\lrpar{W,\bbE,\sigma,\cN}
	= \lrpar{\Ch\bbE, \lrpar{ \tCh a, \tCh I_b\lrpar \cN }, -\eta\lrpar\cN},
\end{equation}
is $D$-closed and represents the Chern character of the K-theory class defined by $\lrpar{W,\bbE,\sigma, \cN}$.

b) Suppose that $A \in \Psi^0_0\lrpar{M/B;\bbE}$ is a fully elliptic family of $0$-pseudodifferential operators acting on the fibers of a fibration $M\to B$, then the Chern character of the index bundle of $A$ is given by
\begin{multline}\label{ZeroFamilies}
	\Ch\lrpar{\Ind A} \\
	= \int_{S^*M/B,S^*\pa M/B} 
	\Ch\lrpar{T^*M/B, \bbE, \sigma\lrpar A, \cN\lrpar A} \wedge \Td\lrpar{M/B} \\
	= \int_{{}^0 S^*M/B} \tCh\lrpar{\sigma\lrpar A}\wedge \Td\lrpar{M/B}
	- \int_{S^*\pa M/B} \eta\lrpar{\cN\lrpar A}\wedge \Td\lrpar{\pa M/B} .
\end{multline}
\end{theorem}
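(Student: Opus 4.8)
The two parts are treated separately. For part~(a) the plan is the three-step argument of Propositions~\ref{Z-i.7}, \ref{ScatChern} and \ref{prop:OddZeroChern}: check that the proposed form is $D_{\cZ}$-closed, check that its class is independent of the choices, then pin the class down on a model representative. Reading off the four rows of $D_{\cZ}$, closedness of $(\Ch\bbE,(\tCh\sigma(A),\tCh I_b(\cN(A))),-\eta(\cN(A)))$ means four identities: $d\Ch\bbE=0$ (Chern--Weil); $d_{\bbS W}\tCh\sigma(A)=-\pi^*_{\bbS W}\Ch\bbE$ (Proposition~\ref{Z-i.7} with $U=T^*M/B$); $d_{\bar L}\tCh I_b(\cN)=-\pi^*_{\bar L}i^*_\pa\Ch\bbE$ (the Fedosov computation \eqref{Z-i.57} for the everywhere-invertible $I_b(\cN)$ on $\bar L$, both sides vanishing once $E^+$ and $E^-$ are identified near the boundary with compatible connections so that $\Ch\bbE|_{\pa M}=0$); and the bottom row, which is precisely Lemma~\ref{Z-i.81} rewritten using $\nu^{\bar L}_*\tCh I_b(\cN)=\Ch^\even(I_b(\cN))$ and $\nu^{\bbS W}_*\tCh\sigma(A)=\Ch^\even(I_c(\cN))$, the latter because the cusp indicial family is the interior symbol restricted over the boundary (cf.\ \eqref{ZeroPairs}, which also forces $(\tCh\sigma(A),\tCh I_b(\cN))$ to satisfy the matching conditions built into $\cZ_2$).

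Feeding these same identities into the homotopy argument of Lemma~\ref{HomotopyInv} applied to $\cZ_*$ shows the class is independent of the connection and unchanged under homotopy within fully elliptic families, and it is visibly unchanged under bundle stabilisation and isomorphism; so it depends only on $[A]\in\Kc(T^*M^\circ/B)$. To identify it with the topological Chern character I would use the description of $\cK_0(M/B)\cong\Kc(T^*M^\circ/B)$ from \S\ref{sec:Kthy}: after a homotopy through fully elliptic zero operators and a smoothing perturbation one may take the full $b$-indicial family of $A$ --- hence also $I_c(\cN(A))$ and the symbol near the boundary --- to be the identity, and then contractibility of the invertible smoothing perturbations of the identity in the cusp calculus deforms $\cN(A)$ to the identity. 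The class is constant along this deformation, and at the endpoint $\eta(\cN)=0$, $\tCh I_b(\cN)=0$, and $\tCh\sigma(A)$ is supported in the interior, so the representative collapses to $(\Ch\bbE,(\tCh\sigma(A),0),0)$ --- precisely Fedosov's representative of $\Ch[A]$ from Proposition~\ref{Z-i.7} (sitting inside $\cZ_*$ via the retraction onto interior-supported forms). This proves (a).

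For (b), \eqref{rccbif.2} --- applicable because, by \S\ref{sec:Kthy}, the analytic index of a fully elliptic zero family factors through $\Kc(T^*M^\circ/B)\overset{\ind}\longrightarrow K(B)$ --- gives $\Ch(\Ind A)=\psi_!\bigl(\Ch[A]\wedge\Td(M/B)\bigr)$, and by part~(a) $\Ch[A]$ is represented by \eqref{ZeroChern}. It then remains to realise $\psi_!$ on $\cH^\even(\cZ_*)$ as the fibre integral $\int_{S^*M/B,\,S^*\pa M/B}$. Following the pattern of the scattering case \eqref{Z-i.53}, I would define this so as to integrate the $\bbS W$- and $\bbS U$-components over the fibres of ${}^0S^*M/B\to B$ and $S^*\pa M/B\to B$, wedged respectively with $\Td(M/B)$ and with $\Td(\pa M/B)=i^*_\pa\Td(M/B)$ (the conormal line being trivial); the $\CI(X;\Lambda^*)$-component is not integrated (exactly as for \eqref{Z-i.11}) and the $\bar L$-component integrates to zero for dimensional reasons. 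Well-definedness on cohomology is a Stokes argument in which the fibre-boundary term of the ${}^0S^*M/B$-integral cancels the $S^*\pa M/B$-contribution, and agreement with $\psi_!$ can be checked on representatives supported in the interior, where the map is classical fibre integration. What survives is exactly $\int_{{}^0S^*M/B}\tCh(\sigma(A))\wedge\Td(M/B)-\int_{S^*\pa M/B}\eta(\cN(A))\wedge\Td(\pa M/B)$.

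\textbf{Main obstacle.} The delicate step is the endgame of (a): organising the deformation to the model operator so that the symbolic consistency conditions \eqref{ZeroPairs} persist throughout and --- above all --- so that the one-parameter family of Chern characters assembles into a genuinely $\wt D_{\cZ}$-closed element of $\wt\cZ_*$, as required to apply Lemma~\ref{HomotopyInv}. For the $\eta$-component this is a transgression of a transgression built from the renormalised trace, and one must check that the trace-defect formula of Appendix~\ref{sec:ResTrace} stays compatible with the extra $\pa_s$-rows of $\wt D_{\cZ}$ --- the bookkeeping already present, in simpler form, in the proofs of Lemmas~\ref{lem:dEta} and \ref{Z-i.81}, and made genuinely harder here by the non-commutativity and non-trace-class nature of the reduced normal operator.
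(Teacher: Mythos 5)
Your proposal is correct and follows essentially the same route as the paper's proof: $D_{\cZ}$-closedness is reduced to the four componentwise identities, of which the key one is Lemma~\ref{Z-i.81}; homotopy/stabilisation invariance is established via Lemma~\ref{HomotopyInv}; and the class is identified by reducing to the representative with trivial normal data, where the formula collapses to Fedosov's, after which part (b) follows by the same reduction to the Atiyah--Singer case. One small slip: the phrase ``hence also $I_c(\cN(A))$'' misattributes the trivialisation of the cusp end to the $b$-indicial normalisation --- in fact it is the subsequent contractibility step that kills $I_c$ --- but since you invoke that step anyway the argument is unaffected.
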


\begin{proof} 
Showing that \eqref{ZeroChern} is $D$-closed is equivalent to showing
\begin{gather*}
	d\Ch\bbE =0, \quad
	d\tCh a = -\pi^*\Ch\bbE, \\
	d\tCh^{ev} I_b\lrpar\cN = 0, \quad
	d\eta\lrpar\cN 
	= -\nu^{\bbS W}_* \tCh(\sigma(A)) + \pi^*_{\bbS U} \nu^{\bar L}_*\tCh(I_b(\cN))
\end{gather*}
the last one of which is Lemma~\ref{Z-i.81} (since $\nu^{\bar L}_*\tCh(I_b(\cN)) = \Ch^\even(I_b(\cN))$) while the other three follow as in the previous section. 

Also as in the previous sections, the forms themselves are invariant under stabilization and bundle isomorphism while Lemma~\ref{HomotopyInv} shows that the cohomology class they define is homotopy invariant. It follows that we can change the $0$-operator whose normal operators define $a$ and $\cN$ as long as we stay in the same $K$-theory class. As explained in \S\ref{sec:Kthy} there is a representative of this class with $\cN$ equal to the identity and $a$ equal to the identity near the boundary.
For this representative the map above clearly coincides with the Chern character and hence this is true for any representative.

In the same way \eqref{ZeroFamilies} follows from homotopy invariance and reduction to the case that the operator is trivial near the boundary.
\end{proof}

\subsection{Transmission families index theorem} \label{FedosovBoutet}

In the same situation as above any class in $\Kc(T^*M^\circ/B)$ can also be
represented by a fully elliptic operator or order and type zero in Boutet
de Monvel's transmission calculus.  With $W=T^*M/B$, $L$ equal to the
normal bundle to the boundary, and $U=W\rest{\pa M}/L$ a K-theory class is
represented by a superbundle over $M$, $\bbE$, a superbundle over $\pa M$,
$\bbF$, and two maps: a family of isomorphisms $a \in \CI\lrpar{\bbS
  W;\hom\lrpar\bbE}$ and a family of Wiener-Hopf operators $N \in
\CI\lrpar{\bbS U; \Psi^*_{WH}\lrpar{L;\lrpar{H^+\otimes\bbE}\oplus\bbF}}$.
We use this data, and a choice of graded connections on $\bbE$ and $\bbF$,
to represent the Chern character using the description of the relative
cohomology from \S\ref{Zero-ind-coh} in terms of forms on $X$, $\bbS W$,
$\pa X$, and $\bbS U$. As in the previous section we will assume that
\begin{quote}
the graded connection on $\bbE$ is chosen so that the restriction of
  $\Ch\lrpar\bbE$ to the boundary is identically zero.
\end{quote}
This description of the Chern character and the resulting index formula are treated by
Fedosov in \cite[$\S$III.4]{MR1401125}; as in Proposition~\ref{Z-i.7}
above, we reinterpret this formula in an appropriate formulation of relative cohomology.

The family of Wiener-Hopf operators plays much the same role as the family of $b,c$ operators in the previous section. 
Were these trace-class it would be natural to use their Chern character in our constructions. However, they are not trace-class so we are forced to use a renormalized trace and we refer to the resulting form as an `eta' form.
In this case the renormalized trace was defined by Fedosov as follows.
With  
$N = \begin{pmatrix} h^+p+b & k \\ t & q \end{pmatrix}$
as in \eqref{BdyBoutet}, 
$N$ is trace-class if and only if $h^+p=0$ so we define $\tr'$ by ignoring this term \cite[\S 4]{MR1401125}
\begin{equation}\label{FedosovTrace}
	\tr'\lrpar N = \tr q + 
	\frac1{2\pi} \int^+_\bbR \tr b\lrpar{\xi,\xi} \;d\xi,
\end{equation}
where in the first term we use $\tr:\hom\lrpar{\bbF}\to\bbR$ applied to $q$, and in the second
$\tr:\hom\lrpar{\bbE}\to\bbR$ applied to the integral kernel of $b$ at the point $\lrpar{\xi,\xi}$.

This renormalized trace is not an actual trace, but instead satisfies a trace-defect formula \cite[Lemma 2.1]{MR1401125}
\begin{equation}\label{FedosovTraceDefect}
	\tr'\lrspar{N_1,N_2} 
	=-\frac i{2\pi}\int_\bbR^+ \tr\lrpar{\frac{\pa p_1\lrpar\xi}{\pa\xi} p_2\lrpar\xi} \;d\xi.
\end{equation}
It is clear from this formula that if either $N_1$ or $N_2$ is `singular' (i.e., $p_1=0$ or $p_2=0$) then $\tr'\lrspar{N_1,N_2}=0$. 

Denote the chosen graded connections on $\bbE$ and $\bbF$ by $\nabla$ and $\nabla^{\pa}$ respectively. Using both of these we define a connection on each of the bundles $\lrpar{H^+\otimes E_\pm}\oplus F_\pm$ acting trivially on $H^+$, we denote the resulting graded connection again by $\nabla$. Notice that $d\tr'\lrpar N = \tr'\lrpar{\nabla N}$.

The trace-defect formula \eqref{FedosovTraceDefect} is formally identical to that of the $b$-trace \eqref{BTraceDefect} on smoothing operators. So if we define $\eta\lrpar N$ as an element of  $\CI\lrpar{\bbS U;\Lambda^\odd}$ by
\begin{multline}\label{FedosovEta}
	\eta\lrpar N= - \frac 1{2\pi i}\int_0^1 
	\lrpar{1-t}\tr'\lrpar{N^{-1}\lrpar{\nabla N}e^{w_N(t)}} 
		+ t\tr'\lrpar{\lrpar{\nabla N}e^{w_N(t)}N^{-1}} \;dt\\
	\Mwhere 
	w_N(t)=(1-t)\omega _++tN^{-1}\omega _-N+\frac1{2\pi i}t(1-t)(N^{-1}\nabla N)^2\\
\end{multline}
then the computations in the proof of Lemma \ref{lem:dEta} apply verbatim to compute $d\eta\lrpar N$ and we conclude that 
\begin{equation}\label{dEtaFedosov}
	d\eta\lrpar N
	= -\pi^*_\pa\Ch\lrpar{\bbE_\pa \oplus \bbF} + \nu_*^L i^*_\pa \tCh\lrpar a,
\end{equation}
where, with $-2\pi i \omega'_{\pm}$ equal to the curvature of $\nabla$ on $\lrpar{H^+\otimes \bbE}\oplus \bbF$,
\begin{equation*}
	\Ch\lrpar{ \bbE_\pa\oplus\bbF} 
	= \tr e^{\omega'_+}-\tr^{\omega'_-} \in \CI\lrpar{\pa X;\Lambda^\even}
\end{equation*}
and as usual
\begin{multline}
	\tCh\lrpar a=-\frac 1{2\pi i}\int_0^1 
	\tr\left(a^{-1}(\nabla a)e^{w_a(t)}\right)dt 
		\in \CI\lrpar{\bbS W;\Lambda ^\odd},\\
	\Mwith w_a(t)
	=(1-t)\omega _++ta^{-1}\omega _-a+\frac 1{2\pi i}t(1-t)(a^{-1}\nabla a)^2.
\end{multline}

It follows that with the chain space from \S\ref{Zero-ind-coh}
\begin{gather*}
\cT^k = \CI(X;\Lambda^k)
\oplus
\lrpar{ \CI_{\pm}(\bbS W;\Lambda^{k-1}) \oplus \CI(\pa X; \Lambda^{k-2}) }
\oplus
\CI(\bbS U;\Lambda^{k-3}) \\
	D_{\cT} = \begin{pmatrix} d & & \\ \phi_1 & -d & \\ & \phi_2 & d \end{pmatrix},
\phi_1 = \begin{pmatrix} -\pi^*_{\bbS W} \\ i^*_{\pa X} \end{pmatrix},
\phi_2 = \begin{pmatrix} \nu^{\bbS W}_*, \pi^*_{\bbS U} \end{pmatrix},
\end{gather*}
the forms
\begin{equation}\label{TransChern}
	\Ch\lrpar{\bbE,\bbF,a,N}
	=\lrpar{ \Ch\lrpar \bbE, \lrpar{ \tCh\lrpar a, -\Ch\lrpar{\bbE_\pa\oplus\bbF} }, -\eta\lrpar N}
\end{equation}
define a relative cohomology class.

\begin{theorem}\label{thm:TransFamilies} $ $

a) $\Ch\lrpar{\bbE,\bbF,a,N}$ is a $D$-closed element of $\cT^{\even}$ and
represents the Chern character of the K-theory class defined by $\lrpar{W,
  \bbE, \bbF, a, N}$.

b) Suppose that $\cA \in \Psi^0_{\tm}\lrpar{M/B;\bbE;\bbF}$ is a fully
elliptic family of transmission pseudodifferential operators acting on the
fibers of a fibration $M\to B$, then the Chern character of the index
bundle of $\cA$ is given by
\begin{multline}\label{TransFamilies}
	\Ch (\Ind (\cA) ) \\
	= \int_{S^*M/B,S^*\pa M/B} 
	\Ch\lrpar{T^*M/B, \bbE, \bbF, \sigma\lrpar\cA, N\lrpar\cA}
	\wedge \Td\lrpar{X,\pa X}\\
	=\int_{S^*M/B} \tCh\lrpar{\sigma\lrpar\cA}\wedge  \Td\lrpar{M/B} 
	- \int_{S^*\pa M/B} \eta\lrpar{N\lrpar\cA} \wedge \Td\lrpar{\pa M/B}.
\end{multline}
\end{theorem}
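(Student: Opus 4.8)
The plan is to mimic the proof of Theorem~\ref{thm:ZeroFamilies} almost line for line, since all the genuinely new analytic input — that the boundary model $N$ is now a family of Wiener--Hopf operators and that the relevant renormalized trace is Fedosov's $\tr'$ rather than the $b$-trace — has already been absorbed into the identity \eqref{dEtaFedosov}. For part~(a) I would first verify that the forms \eqref{TransChern} are $D_{\cT}$-closed. Writing $D_{\cT}$ out on a chain $(\xi_1,(\alpha,\beta),\gamma)$, closedness is the system $d\xi_1=0$, $d\alpha=-\pi^*_{\bbS W}\xi_1$, $d\beta=i^*_{\pa X}\xi_1$ and $d\gamma=-\nu^{\bbS W}_*\alpha-\pi^*_{\bbS U}\beta$; for the candidate $\xi_1=\Ch(\bbE)$, $\alpha=\tCh(a)$, $\beta=-\Ch(\bbE_\pa\oplus\bbF)$, $\gamma=-\eta(N)$ the first identity is Chern--Weil, the second is exactly Proposition~\ref{Z-i.7} (Fedosov's transgression $d\tCh(a)=-\pi^*\Ch(\bbE)$), the third holds because $\Ch(\bbE_\pa\oplus\bbF)$ is closed while $i^*_{\pa X}\Ch(\bbE)=0$ by the chosen normalization of the connection on $\bbE$, and the fourth is precisely \eqref{dEtaFedosov} once $\pi^*_\pa$, $\nu^L_*i^*_\pa$ are matched with $\pi^*_{\bbS U}$, $\nu^{\bbS W}_*$.

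Next, as in Propositions~\ref{ScatChern} and~\ref{prop:OddZeroChern}, the individual forms in \eqref{TransChern} are visibly unchanged under stabilization and under graded bundle isomorphisms, so the class depends only on $(W,\bbE,\bbF,a,N)$; Lemma~\ref{HomotopyInv} then shows the resulting cohomology class is homotopy invariant. It therefore suffices to evaluate it on one convenient representative of each K-theory class. By Boutet de Monvel's reduction recalled in Section~\ref{sec:Kthy}, every class in $\Kc(T^*M^\circ/B)\cong\cK_{\tm}(M/B)$ is represented by an operator homotopic to $\begin{pmatrix}\gamma^+\wt A&0\\0&\wt Q\end{pmatrix}$ with $\wt A$ equal to the identity near $\pa M$ and $\wt Q=\Id$; for such a representative $N=\Id$ so $\eta(N)=0$, while $F^+\cong F^-$ and $E^+\cong E^-$ near $\pa M$ force $\Ch(\bbE_\pa\oplus\bbF)=0$ and $i^*_{\pa M}\Ch(\bbE)=0$. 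Thus \eqref{TransChern} collapses to $(\Ch(\bbE),(\tCh(a),0),0)$, which is exactly Fedosov's pair from Proposition~\ref{Z-i.7} and represents the usual relative Chern character; part~(a) follows.

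For part~(b) I would push $\Ch(\bbE,\bbF,\sigma(\cA),N(\cA))\wedge\Td(M/B)$ forward to $B$ using the map of Section~\ref{sec:Rel} (the iterate of \eqref{InducedCohoMap} extending \eqref{FibPush}), exactly as in Theorem~\ref{thm:ZeroFamilies}(b); the wedge with $\Td$ descends to relative cohomology by the module structure of \S\ref{sec:Rel}. Under this push-forward the $\CI(M;\Lambda^*)$ component contributes nothing (forms pulled back from $M$ integrate to zero over the cosphere fibres) and the $\CI(\pa M;\Lambda^*)$ component contributes nothing (for dimensional reasons), so only the $\bbS W$ and $\bbS U$ pieces survive, yielding $\int_{S^*M/B}\tCh(\sigma(\cA))\wedge\Td(M/B)$ and $-\int_{S^*\pa M/B}\eta(N(\cA))\wedge\Td(\pa M/B)$ respectively. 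The right-hand side of \eqref{TransFamilies} is homotopy invariant by part~(a) together with naturality of the push-forward, and when $\cA$ is trivial near the boundary it reduces to the classical Atiyah--Singer fibre integral \eqref{Z-i.13}; since any fully elliptic $\cA$ can be deformed to such an operator, the two sides agree, which gives the theorem.

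I expect the main difficulty to be bookkeeping rather than conceptual. The delicate point is that Fedosov's trace $\tr'$ discards the $h^+p$ block and so, unlike the ordinary trace or the $b$-trace, is defined relative to the Wiener--Hopf splitting; one must be confident that the ``verbatim'' transcription of the computation in Lemma~\ref{lem:dEta} really does reproduce \eqref{dEtaFedosov} with the stated signs, using the trace-defect formula \eqref{FedosovTraceDefect} and the vanishing of $\tr'$ on commutators with a singular factor. The second place where care is needed is the reduction step: one must check that in Boutet de Monvel's normal form the boundary bundle $\bbF$ and the block $(H^+\otimes\bbE_\pa)\oplus\bbF$ do become cohomologically trivial in $\cT_*$, so that \eqref{TransChern} genuinely lands on Fedosov's formula; and finally the factors of $2\pi i$ and the $(-1)^{k-1}$ orientation signs entering \eqref{MapPhi} and the fibre-integration maps must be tracked consistently.
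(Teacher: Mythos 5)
Your proposal is correct and follows essentially the same strategy as the paper: verify $D_{\cT}$-closedness of \eqref{TransChern} using \eqref{dEtaFedosov} together with the normalization $i^*_{\pa X}\Ch(\bbE)=0$, invoke Lemma~\ref{HomotopyInv} for homotopy invariance, and reduce via Boutet de Monvel's normal form to a representative trivial near the boundary where everything collapses to Fedosov's formula and the Atiyah--Singer fibre integral. The paper's own proof is terser (it mostly references the analogous argument in the zero-calculus case), but the content matches, and your explicit line-by-line verification of the four closedness identities is a faithful unpacking of what the paper leaves implicit.
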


\begin{proof} 
As in the previous section, we have shown that \eqref{TransChern} defines a
$D$-closed form, Lemma~\ref{HomotopyInv} shows that it depends only on the
K-theory class and evaluating it on a representative that is trivial at the
boundary shows that \eqref{TransChern} is the usual Chern character and
reduces \eqref{TransFamilies} to the Atiyah-Singer families index theorem.
\end{proof}

\appendix

\section{Normal operator}\label{NormalOp}

We review the reduced normal operator in the zero calculus. 

Zero differential operators are elements of the enveloping algebra of the
vector fields that vanish at the boundary, $\cV_0$.  Thus if $x$ is a
boundary defining function and $y_i$ are local coordinates along the
boundary of $X$, we have
\begin{gather*}
\cV_0 = \mathrm{Span}_{\CI(X)}\ang{ x\pa_x, x\pa_{y_1}, \ldots, x\pa_{y_n} },
\\
\Mand
P \in \mathrm{Diff}_0^k(M;E,F) \iff
P = \sum_{j+|\alpha| \leq k} a_{j,\alpha}(x,y) (x\pa_x)^j (x\pa_y)^\alpha,
\end{gather*}
where  the coefficients,  $a_{j,\alpha}$, are  section of  the homomorphism
bundle $\hom(E,F)$.

It is convenient to study these operators by viewing their Schwartz kernels
as defined on a compactification of $(M^\circ)^2$ different from $\bar
M^2$.  We construct $M^2_0$ from $\bar M^2$ by replacing the diagonal at
the boundary, $\diag_{\pa M}$, with its inward pointing spherical normal
bundle,
\begin{equation*}
	M^2_0 = 
	(\bar M^2 \setminus \diag_{\pa M})
	\bigsqcup
	(S^+\diag_{\pa M})
\end{equation*}
and endowing the result with the smallest smooth structure including smooth
functions on $\bar M^2$ and polar coordinates around $\diag_{\pa M}$ -- a
process known as `blowing-up $\diag_{\pa M}$ in $\bar M^2$'.  The diagonal
lifts from the interior of $M^2$ to a submanifold, $\diag_0$, of $M^2_0$ (a
$p$-submanifold in the sense of \cite{Corners}).  The set of `Dirac
sections' along the diagonal on $M^2$ coincides with the space of Schwartz
kernels of arbitrary differential operators on $M$, the Schwartz kernels of
zero differential operators are precisely those that lift to Dirac sections
of $M^2_0$ along $\diag_0$.

The passage from zero differential operators to zero pseudodifferential
operators is a micro-localization in that we enlarge the space of Schwartz
kernels from Dirac sections along $\diag_0$ to distributions on $M^2_0$
with conormal singularities along $\diag_0$, i.e., $I^s(M^2_0, \diag_0)$ in
the notation of \cite{Corners}. We further demand that at each boundary
face of $M^2_0$ the Schwartz kernel have a classical asymptotic expansion
in the corresponding boundary defining function and its logarithm. We refer
to \cite{Mazzeo:Hodge}, \cite{Mazzeo-Melrose} for the details.

We can use the expansions at each of the boundary faces to define `normal
operators' by restricting to the leading term at that face. The zero
diagonal meets the boundary at the front face (the face introduced by the
blow-up of $\diag_{\pa M}$) and the normal operator at this face is known
as {\em the} normal operator. Directly from the definition it is clear that
the restriction of the kernel of a zero pseudodifferential operator of
order $s$ to the front face is a distribution in
\begin{equation*}
I^{s+\frac14}(S^+\diag_{\pa M}, \diag_0 \cap S^+\diag_{\pa M})
\cong
I^{s+\frac14}( \pa M \times \bbR^{m-1} \times \bbR^+, \pa M \times \{(0, 1)\}).
\end{equation*}
It can be shown that the kernels of normal operators define, for each $q
\in \pa M$, a distribution on $T_qM^+ \cong \bbR^{m-1} \times \bbR^+$ that
is invariant with respect to the affine group action
\begin{equation*}
	\xymatrix @R=1pt
	{ (\bbR^{m-1}\times \bbR^+)^2 \ar[r] & \bbR^{m-1} \times \bbR^+ \\
	( (a,b), (c,d) ) \ar@{|->}[r] & (a+bc, bd) } 
\end{equation*}
and that the normal operator is a homomorphism in that the kernel of the
normal operator of the composition of two operators is the convolution of
the normal operators. Thus analyzing the normal operator is tantamount to
harmonic analysis of the affine group on $\bbR^{m-1} \times \bbR^+$.

The invariance with respect to the affine group action suggests studying
the normal operator by first taking the Fourier transform in $\bbR^{m-1}$
and then exploiting dilation invariance with respect to the $\bbR^+$
variable. The result is a family of operators acting on the normal bundle
to the boundary and parametrized by the cosphere bundle to the
boundary. The choice of a trivialization of the normal bundle allows us to
identify this with an element of
\begin{equation*}
	\CI( \bbS^* \pa M, \Psi^s_b(\bbR^+) ).
\end{equation*}

Finally let $x$ be the identity map $\bbR^+ \to \bbR^+$, we compactify
$\bbR^+$ by introducing $\frac1x$ as a boundary defining function for the
`point at $\infty$'.  After performing these transformations on the normal
operator, and identifying the compactification of $[0,\infty)$ with
  $[0,1]$, the normal operator is an operator of $b$-type at the $0$ end
  and an operator of $sc$-type at the $1$ end, with a singularity of order
  $-s$. This family is known as the {\em reduced normal operator}, \eg,
\begin{equation*} A \in \Psi^0_0(X) \implies \cN(A) \in \CI(\bbS^* \pa
  M, \Psi^0_{\fbsc}([0,1]) ).
\end{equation*}

The distributional kernels of elements of the $\fbsc$ can be conveniently
described on the $\fbsc$ double space, $[0,1]^2_{\fbsc}$ obtained from
$[0,1]^2$ in two steps. In the first, we blow-up the corners $\{(0,0),
(1,1)\}$ obtaining two new boundary faces, one $bf_0$ replacing $\{(0,0)\}$
and one $bf_1$ replacing $\{(1,1)\}$. In the second step, we blow-up the
intersection of $bf_1$ with the closure of the diagonal of $(0,1)^2$; we
refer to the resulting boundary face as $cf$. The closure of the diagonal
of $(0,1)^2$ in $[0,1]^2_{\fbsc}$ is referred to as
$\diag_{\fbsc}$. Pseudo-differential operators of $\fbsc$ type on $[0,1]$
are those whose Schwartz kernels are distributions on $[0,1]^2_{\fbsc}$
conormal with respect to $\diag_{\fbsc}$ and otherwise smooth that vanish
to infinite order at every boundary face that does not meet
$\diag_{\fbsc}$.

These operators are sometimes known as the {\em small} calculus of $\fbsc$
operators and there is also an associated {\em large} calculus of $\fbsc$
pseudodifferential operators where the Schwartz kernels need not vanish at
the boundary faces that share a corner with $bf_0$ (known as the `side faces'). It is standard (see
\cite{APSBook}, \cite{Mazzeo-Melrose}) that parametrices and generalized inverses of $\fbsc$
pseudodifferential operators, when these exist, are elements of the large
calculus of $\fbsc$ operators.

A complete metric on the interior of $[0,1]_x$ is of $\fbsc$ type if it takes the form $\frac{dx^2}{x^2}$ near $x =0$ and the form $\frac{dx^2}{(1-x)^4}$ near $x=1$. The corresponding space of square-integrable functions is denoted $L^2_{\fbsc}([0,1])$.
Elements of the small calculus of $\fbsc$ operators of order zero define bounded operators on $L^2_{\fbsc}([0,1])$ as do elements of the large calculus, so long as they vanish to order greater than $-1$ at each of the side faces.
Thus it makes sense to compose these operators and the references cited above show that the composition is given again by an element of, respectively, the small or large $\fbsc$ calculus.

\section{Residue traces}\label{sec:ResTrace}

We review the residue traces on algebras of pseudodifferential
operators. These are used to define Chern and eta forms. We start by
presenting the case of closed manifolds, before moving on to manifolds with
boundary and finally operator valued forms. We refer the reader to
\cite{Melrose-Nistor0} for the proofs of these statements for the algebra
of cusp operators. The proofs consist of formal algebraic manipulations and
hence hold verbatim for many other calculi -- we shall need these results
for the $b, sc$ algebra in section \ref{sub:ZeroFIT}

On a closed manifold, $X$, the Guillemin-Wodzicki residue trace, $\tTrsig$,
is the unique trace on $\Psi^*\lrpar X$, and has two well-known expressions
\begin{equation}\label{WodRes}
\Trsig A=\int_{S^*X} \sigma_{(-n)}\lrpar A = \Res_{\tau=0}\Tr\lrpar{Q^{-\tau}A}.
\end{equation}
The first expression (due to Wodzicki) involves the term in the expansion
of the full symbol of $A\in\Psi^\bbZ\lrpar X$ that is homogeneous of order
$-n$, $\sigma_{(-n)}$. Although $\sigma_{(-n)}\lrpar A$ is not invariantly
defined, its integral is (notice that scale invariance as a density forces
the homogeneity). The second expression (due to Guillemin) involves the
choice of an {\em admissible} first order operator $Q$, which we shall
understand to mean that $Q$ is invertible, essentially self-adjoint, and
positive.  The function $\tau\mapsto \Tr\lrpar{Q^{-\tau}A}$ is holomorphic
on a half-plane and can be meromorphically extended to the whole complex
plane. The pole at $\tau=0$ is at worst a simple pole and the residue is
independent of the choice of $Q$.

The first expression above shows that the Guillemin-Wodzicki residue
vanishes on operators of order less than $-n$, the space of trace-class
operators. For each choice of admissible operator $Q$, we can extend the
trace from trace-class operators to a {\em renormalized} trace on all of
$\Psi^*$ by
\begin{equation}\label{ReWod}
	{}^R\Tr\lrpar A = \FP_{\tau=0}\Tr\lrpar{Q^{-\tau}A} 
		= \lim_{\tau\to0}\lrpar{\Tr\lrpar{Q^{-\tau}A}-\frac1\tau\hat\Tr^\sigma\lrpar A}.
\end{equation}
This renormalized trace does depend on the choice of $Q$, indeed
\begin{equation*}
	{}^R\Tr_{Q_1}\lrpar A - {}^R\Tr_{Q_2}\lrpar A = - \hat{\Tr^\sigma}\lrpar{A\log\lrpar{Q_1/Q_2}}
\end{equation*}
where we define 
\begin{equation}\label{logQ}
	\log\lrpar{Q_1/Q_2} = \dzero{\tau}\lrpar{Q_1^\tau Q_2^{-\tau}}.
\end{equation}
We also point out that ${}^R\Tr$ is not really a trace, since it does not vanish on commutators:
\begin{equation*}
	{}^R\Tr\lrpar{\lrspar{A,B}} = -\hat{\Tr^\sigma}\lrpar{D_Q\lrpar AB},
\end{equation*}
with $D_Q$ the derivation on $\Psi^*\lrpar X$ defined by
\begin{equation*}
	D_Q\lrpar A = \dzero{\tau}Q^\tau AQ^{-\tau}.
\end{equation*}

In this subsection we review the extension by Melrose-Nistor
\cite{Melrose-Nistor0} (see also \cite{LauterMoroianu},
\cite{MoroianuNistor}) of the Guillemin-Wodzicki residue trace to operator
algebras on manifolds with boundary.

We define, for each choice of bdf $x$ and admissible first order positive
$Q\in\Psi^1_{\fbsc}\lrpar{M;E}$, `trace' functionals on the algebra
$\Psi^*_{\fbsc}\lrpar{M;E}$. By a simple extension of the results of Piazza
\cite{Piazza}, complex powers of $Q$ can be defined as elements of the {\em
  large} calculus of $\fbsc$ operators (described in \S\ref{NormalOp}).
Assume for the moment that $A$ and $B$ are elements of the small calculus
of $\fbsc$ operators.  It is shown in \cite[Lemma 4]{Melrose-Nistor0} that
the function
\begin{equation*}
\cZ\lrpar{A;z,\tau} =
\frac12\lrspar{\Tr\lrpar{x^zQ^{-\tau}A}+\Tr\lrpar{Q^{-\tau}x^zA}}
\end{equation*}
is holomorphic for $\Re{z}, \Re{\tau} >>0$ and extends meromorphically to
$\bbC^2$ with at most simple poles in $z$ or $\tau$. We use the expansion
at zero to define the following functionals
\begin{equation}\label{TraceDef}
\cZ\lrpar{A;z,\tau} \sim
\frac1{z\tau}\Trpasig A + \frac1z\Trpa A + \frac1\tau\Trsig A + \RTr\lrpar A
+ \cO\lrpar{ z,\tau}.
\end{equation}

Among these functionals, only $\tTrpasig$ is a trace (vanishes on all
commutators) and is independent of the choice of $x$ and $Q.$  We note that
the residue trace functionals could be defined using
$\Tr\lrpar{x^zQ^{-\tau}A}$ or $\Tr\lrpar{Q^{-\tau}x^zA}.$ This follows from
the fact that
\begin{equation*}
\Tr\lrpar{x^zQ^{-\tau}A}-\Tr\lrpar{Q^{-\tau}x^zA}
\end{equation*}
is regular at $z=0,$ $\tau=0.$ The renormalized trace, though, would {\em a
  priori} be different.

On the ideal $x^{\infty}\Psi^*_{\fbsc}\lrpar X$, the functionals
$\tTrpasig$ and $\tTrpa$ both vanish, the functional $\tTrsig$ is a trace
and is given by the expressions \eqref{WodRes}.  Similarly, on the ideal
$\Psi^{-\infty}_{\fbsc}\lrpar X$, the functionals $\tTrpasig$ and
$\tTrsig$ both vanish, the functional $\tTrpa$ is a trace and has an
expression similar to \eqref{WodRes}. Indeed, recall that, by Lidskii's
theorem, if $A$ is a trace-class operator with Schwartz kernel $\cK_A$,
then
\begin{equation*}
	\Tr\lrpar A = \int_X \cK_A\rest{\diag}
\end{equation*}
(note that $\cK\rest{\diag}$ is a density).
For a general operator $A\in\Psi^{-\infty}_{\fbsc}\lrpar X$, having chosen a bdf $x$, we can expand $\cK\rest{\diag}$ in its `Taylor's series' at $x=0$
\begin{equation}\label{Taylor}
	\cK_A\sim \sum_{\ell\geq0}x^\ell\cK_\ell,
\end{equation}
and one of these terms will be invariant under the rescaling
$x\mapsto\lambda x$, say $x^r\cK_r$ (since $\cK_A$ is a singular density at
$x=0$, this is not $\cK_0$).  It is easy to see that the residue at $z=0$
of
\begin{equation*}
	\Tr\lrpar{x^z A} = \int_X x^z\cK_A\rest{\diag}
\end{equation*}
is given by the partial integral of $\cK_r$,
\begin{equation*}
	\Trpa A = \int_{\pa X} \cK_r\rest{\pa X}.
\end{equation*}
Just as for the Guillemin-Wodzicki residue, the density $\cK_r\rest{\pa X}$
depends on the choice of $x$, but its integral along the boundary does not.

For operators not in these ideals, $\tTrsig$ is obtained by taking a
`residue in the symbol' and `renormalization at the boundary' meaning, for
instance, that we renormalizw the integral occuring in the first expression
in \eqref{WodRes}.  Where by renormalized integral of a density $\mu$, we
mean
\begin{equation*}
	\sideset{^R}{}\int \mu = \FP_{z=0} \int x^z\mu.
\end{equation*}
Similarly, $\tTrpa$ is obtained by taking a `residue at the boundary' and
`renormalization in the symbol'. So, for instance, expand the operator's
kernel as in \eqref{Taylor} as a distribution to pick out $\cK_r$ (residue
at the boundary), this defines an pseudodifferential operator over the
boundary and then its renormalized trace over the boundary defined as in
\eqref{ReWod} equals $\tTrpa$.  The common residue, i.e., the `residue at
the boundary' and `residue in the symbol', is given by the functional
$\tTrpasig$.

The functional $\tTrsig$ is independent of $Q$ just as in the case of a
closed manifold, but does depend on the choice of $x$ via \cite[Lemma
9]{Melrose-Nistor0}
\begin{equation*}
	\Trsig[x_1] A-\Trsig[x_2] A
	= \Trpasig{ \log\lrpar{x'/x} A}.
\end{equation*}
Similarly, $\tTrpa$ is independent of the choice of $x$, but depends on the choice of $Q$ via \cite[Lemma 11]{Melrose-Nistor0}
\begin{equation*}
	\Trpa[Q_1] A - \Trpa[Q_2] A = - \Trpasig{\log\lrpar{Q_1/Q_2}A}
\end{equation*}
with $\log\lrpar{Q_1/Q_2}$ defined by \eqref{logQ}.
It follows that the functional $\tTrpasig$ is independent of both the choice of $x$ and the choice of $Q$.

To analyze the behavior of these functionals on commutators, notice that
\begin{equation*}\begin{split}
	x^zQ^{-\tau}\lrspar{A,B}
	&= x^zQ^{-\tau}\lrpar{A-Q^\tau AQ^{-\tau}}B
	\\&\phantom{xxx}
	- x^zQ^{-\tau} B\lrpar{A-x^z A x^{-z}}
	+\lrspar{x^z A x^{-z}, x^zQ^{-\tau}B} \\
	&= -x^zQ^{-\tau}\lrpar{\tau D_Q\lrpar A + \cO\lrpar{\tau^2}}B
	\\&\phantom{xxx}
	+ x^zQ^{-\tau}B \lrpar{z D_x\lrpar A + \cO\lrpar{z^2}}
	+\lrspar{x^z A x^{-z}, x^zQ^{-\tau}B} \\
\end{split}\end{equation*}
where
\begin{equation*}\begin{split}
	D_x\lrpar{A} &= \dzero{z}\lrpar{x^zAx^{-z}} =: \lrspar{\log x, A},\\
	D_Q\lrpar{A} &= \dzero{\tau}\lrpar{Q^ \tau AQ^{-\tau}} =: \lrspar{\log Q, A}.
\end{split}\end{equation*}
and since $\Tr\lrpar{x^zQ^{-\tau}A}$ has only simple poles in $z$ and $\tau$ the terms with $\cO\lrpar{\tau^2}$ and $\cO\lrpar{z^2}$ do not affect the finite part at zero, i.e. $\Tr\lrpar{x^zQ^{-\tau}\lrspar{A,B}}$ has the same finite part at zero as $\Tr\lrpar{x^zQ^{-\tau}\lrpar{-\tau D_Q\lrpar A B + zBD_x\lrpar A}}$. 

A similar computation shows that $\Tr\lrpar{Q^{-\tau}x^z\lrspar{A,B}}$ has the same finite part at zero as
$\Tr\lrpar{Q^{-\tau}x^z\lrpar{zD_x\lrpar A B - \tau BD_Q\lrpar A}}$. 
This shows that the expansion of $2\cZ\lrpar{\lrspar{A,B};z,\tau}$ at $z=0,\tau=0$ is
\begin{multline}\label{TraceComm}
	-\frac2z\Trpasig{BD_Q\lrpar A}
	+\frac2\tau\Trpasig{BD_x\lrpar A}\\
	- \Trsig{BD_Q\lrpar A + D_Q\lrpar AB} +\Trpa{BD_x\lrpar A + D_x\lrpar AB} \\
	- \frac\tau{z}\Trpa{BD_Q\lrpar A + D_Q\lrpar AB} + \frac z\tau\Trsig{BD_x\lrpar A + D_x\lrpar AB}
	+ \cO\lrpar{\tau,z}.
\end{multline}
Notice that if $\lrspar{A,B}$ is in $\Psi^{-\infty}_{\fbsc}\lrpar{M;E}$
then those terms in \eqref{TraceComm} with $\frac1z$ must vanish, while if
it is in $x^\infty\Psi^*_{\fbsc}\lrpar{M;E}$ then those terms with
$\frac1\tau$ must vanish. This observation can be used, together with
Calderon's formula for the index, to obtain a residue trace formula for the
index of a Fredholm operator, see \cite{Melrose-Nistor0},
\cite{MoroianuNistor}.

For any $\lambda>1$ notice that
\begin{equation*}
	\hat\cZ\lrpar{A;z}
	= \frac{\cZ\lrpar{A;z,\lambda z} + \cZ\lrpar{A;z,\frac1\lambda z} 
		- \lrpar{\lambda+ \frac1\lambda}\cZ\lrpar{A;z,z}}
	{2-\lrpar{\lambda+\frac1\lambda}}
\end{equation*}
coincides with the usual trace if $A$ is trace-class, and from \eqref{TraceComm} satisfies
\begin{multline}\label{NewCommTrace}
	\hat\cZ\lrpar{\lrspar{A,B};z}
	\sim 
	-\frac1z\Trpasig{BD_Q\lrpar A} \\
	- \Trsig{\frac{BD_Q\lrpar A + D_Q\lrpar AB}2} 
	+\Trpa{\frac{BD_x\lrpar A + D_x\lrpar AB}2}
	+ \cO\lrpar z,
\end{multline}
near $z=0$.
Thus the renormalized trace of a commutator is given by the second line in \eqref{NewCommTrace} at $z=0$,
\begin{equation}\label{TrDefect}
	\RTr\lrpar{\lrspar{A,B}} = 
	- \Trsig{\frac{BD_Q\lrpar A + D_Q\lrpar AB}2} 
	+\Trpa{\frac{BD_x\lrpar A + D_x\lrpar AB}2}.
\end{equation}

Finally we point out that the same formula holds for elements of the {\em large} calculus of $\fbsc$ operators. Indeed, we can choose a sequence of smooth, non-negative functions $\phi_k \in \CI([0,1]^2_{\fbsc})$ that are identically equal to one in a neighborhood of the lifted diagonal $\diag_{\fbsc}$, vanish to infinite order at any boundary face that does not meet $\diag_{\fbsc}$, converge uniformly to the constant function $1$, and, in a collar neighborhood of the boundary face $bf_0$ are independent of the boundary defining function. Denote the right-hand-side of \eqref{TrDefect} by $\cF(A,B)$ and define the operators $A_k$ and $B_k$ by multiplying the distributional kernels of $A$ and $B$ respectively by $\phi_k$.
Then $A_k$ and $B_k$ are in the small calculus and we point out that
\begin{equation*}
	\RTr([A,B]) 
	= \lim_k \RTr( [A_k, B_k])
	= \lim_k \cF(A_k, B_k)
	= \cF(A, B),
\end{equation*}
since, for instance, the symbol of $A_k$ coincides with that of $A$ and the expansion of $A_k$ at $bf_0$ is the same as that of $A$ with the coefficients multiplied by $\phi_k$.

\section{Operator valued forms}\label{Op-forms}

We will make use of forms on the vector space $\Psi^*_{\fbsc}$ pulled back
to $\bbS^*Y$ with values in $\cG^0_{b,sc}$, the invertible elements of
order zero in the $b,sc$ calculus. It will be useful to have the analogue
of the trace defect formula \eqref{TrDefect}.

Consider two pure forms with values in $\Psi^0_{b,sc}$, say
\begin{equation*}\begin{split}
	\eta \in \CI\lrpar{\bbS^*Y,\Lambda^k\Psi^0_{b,sc}},& \phantom{xx}
		\eta = A \hat\eta \text{ with }\hat\eta\in\Omega^k\bbS^*Y\\
	\omega \in \CI\lrpar{\bbS^*Y,\Lambda^\ell\Psi^0_{b,sc}},& \phantom{xx}
	\omega = B \hat\omega \text{ with }\hat\omega\in\Omega^\ell\bbS^*Y
\end{split}\end{equation*}
Notice that
\begin{equation}\label{sbracket}\begin{split}
	\lrspar{\eta,\omega}_s &= \eta\wedge\omega - \lrpar{-1}^{|\eta|\cdot|\omega|}\omega\wedge\eta\\
	&=\lrspar{A,B}\hat\eta\wedge\hat\omega.
\end{split}\end{equation}

The trace functionals and derivations discussed in Appendix \ref{sec:ResTrace} have natural extensions to forms, e.g.,
\begin{equation*}
	\RTr:\CI\lrpar{\bbS^*Y,\Lambda^k\Psi^0_{b,sc}} \to \Omega^k\bbS^*Y,
\end{equation*}
by acting on coefficients (thus $\RTr\lrpar \eta = \RTr\lrpar A \hat\eta$). The trace defect formulas generalize to this context; in particular
\begin{multline}\label{TrDefectForms}
	\RTr\lrpar{\lrspar{\eta,\omega}_s} =
	- \Trsig{\frac{D_Q\lrpar \eta \wedge\omega 
		+ \lrpar{-1}^{|\eta|\cdot|\omega|}\eta \wedge D_Q\lrpar \omega}2} \\
	+\Trpa{\frac{D_x\lrpar \eta \wedge\omega 
		+ \lrpar{-1}^{|\eta|\cdot|\omega|}\eta \wedge D_x\lrpar \omega}2} 
\end{multline}

\end{document}